\documentclass[11pt]{amsart}

\usepackage{amsmath,amssymb,amsfonts,amsthm}
\usepackage[margin=1in]{geometry}
\usepackage[hidelinks]{hyperref}
\usepackage[capitalize,noabbrev]{cleveref}
\usepackage{microtype}

\crefname{thm}{Theorem}{Theorems}
\crefname{prop}{Proposition}{Propositions}
\crefname{lem}{Lemma}{Lemmas}
\crefname{defn}{Definition}{Definitions}
\crefname{rem}{Remark}{Remarks}

\newtheorem{thm}{Theorem}[section]
\newtheorem{prop}[thm]{Proposition}
\newtheorem{lem}[thm]{Lemma}

\theoremstyle{definition}
\newtheorem{defn}[thm]{Definition}
\theoremstyle{remark}
\newtheorem{rem}[thm]{Remark}

\newcommand{\R}{\mathbb{R}}

\newcommand{\Z}{\mathbb{Z}}
\newcommand{\N}{\mathbb{N}}
\newcommand{\F}{\mathbb{F}}
\newcommand{\T}{\mathbb{T}}

\newcommand{\C}{\mathbb{C}}

\newcommand{\NN}{\mathcal{N}}

\newcommand{\al}{\alpha}
\newcommand{\be}{\beta}
\newcommand{\la}{\lambda}

\newcommand{\La}{\Lambda}
\newcommand{\de}{\delta}

\newcommand{\De}{\Delta}
\newcommand{\ga}{\gamma}

\newcommand{\om}{\omega}

\newcommand{\eps}{\varepsilon}
\newcommand{\si}{\sigma}

\newcommand{\1}{\mathbf{1}}

\newcommand{\subs}{\subseteq}

\newcommand{\abs}[1]{\lvert #1 \rvert}
\newcommand{\norm}[1]{\lVert #1 \rVert}

\newcommand{\cB}{\mathcal B}
\newcommand{\cD}{\mathcal D}
\newcommand{\cE}{\mathcal E}
\newcommand{\cK}{\mathcal K}

\newcommand{\cM}{\mathcal M}
\newcommand{\cN}{\mathcal N}
\newcommand{\cC}{\mathcal C}

\newcommand{\rank}{\operatorname{rank}}
\newcommand{\dist}{\operatorname{dist}}

\numberwithin{equation}{section}

\title[Simplex--center configurations]{Simplex--center configurations in dense subsets of
	Euclidean spaces and the integer lattice}

\author{\'Akos Magyar}

\thanks{The author was
	partially supported from Simons Foundation MPS-TSM-854813 and NFKIH Excellence 15421 grants.}

\begin{document}
	
	\begin{abstract}
		We obtain density Ramsey theorems for configurations consisting of the vertices of a simplex $\De_o$ together with their barycenter.  We prove that any subset $A\subs\R^n$ of positive upper density contains an isometric copy of all sufficiently large dilates of $\De_o$ together with its barycenter. As this configuration is non-spherical such results are not possible with respect to the quadratic Euclidean metric, we consider general metrics $\rho$ defined by a positive-definite, homogeneous forms of even degree at least four. We prove the analogous result in the discrete setting, for subsets $A$ of the integer lattice $\Z^n$, under some natural and necessary congruence restrictions on the scales $\la$ at which the set $A$ can contain an isometric copy of the simplex. 
	\end{abstract}
	
	\maketitle
	
	\section{Introduction.} In the 1970s Erd\H{o}s et al. \cite{EGRSS} initiated the study of geometric or Euclidean Ramsey theory. They studied finite point configurations $F$ such that for every $r$-coloring of a sufficiently high-dimensional Euclidean space there is at least one color class which contains an isometric copy of $F$. Later Bourgain \cite{Bourgain86} obtained a natural density analogue for non-degenerate simplices $\De=\{w_1,\ldots,w_m\}$. He showed that any positive density set $A\subs \R^m$ necessarily contains an isometric copy of its dilates $\la\De$, for sufficiently large scales $\la\geq \la(A,\De)$. An example of Graham \cite{Graham94} shows that in both the coloring and the density case positive results are only possible when the configuration is spherical, i.e. can be inscribed in a sphere. 
	
	However, it was shown in \cite{CMP17} that such results are still possible for certain non-spherical configurations if one changes the underlying metric of the ambient space. In particular it was shown that a positive density set $A\subs\R^n$ contains a three-term progression $x,x+y,x+2y$ with $|y|_p=\la$ for all $\la\geq \la(A)$, for $1<p<\infty,\ p\neq 2$, at least in large dimensions $n\geq n(p)$. Here $|y|_p=(\sum_{i=1}^n |y_i|^p)^{1/p}$ denotes the $\ell^p$-metric of the ambient space. Our first result is an extension to configurations consisting of the vertices of a simplex with $m$ vertices $\De=\{w_1,\ldots,w_m\}$ together with its barycenter with respect to metrics generated by certain positive, convex, homogeneous forms. To be more precise, let 
	\begin{equation}\label{eq:centered-simplex}
		\De_o=(w_1,\ldots,w_m),\qquad \sum_{i=1}^m w_i=0,
	\end{equation}
	be a fixed simplex in $\R^n$ centered at the origin. We will denote by 
	\begin{equation}\label{eq:E}
		E:=\binom m2,
	\end{equation}
	the number of its edges. 
	
	Let $Q\in\Z[x_1,\ldots,x_n]$ be homogeneous of even degree $r\geq 4$.
	Throughout we assume that $Q$ is positive-definite and
	convex, so that
	\[
	\rho(x):=Q(x)^{1/r}
	\]
	is a norm, thus it defines a metric on the underlying Euclidean space $\R^n$.
	We will also assume that $Q$ is non-singular in the sense that
	\begin{equation}\label{eq:Q-nonsingular}
		\nabla Q(x)=0\quad (x\in\C^n)
		\qquad\Longrightarrow\qquad x=0.
	\end{equation}
	
	We define
	\begin{equation}\label{eq:edge-data}
		\ell'_{ij}:=\rho(w_i-w_j)\quad \text{and}\quad \ell_{ij}=Q(w_i-w_j)\qquad (1\leq i<j\leq m).
	\end{equation}
	Note that $\ell'_{ij}=\ell_{ij}^{1/r}$ is the length of the edge of the simplex connecting the points $w_i$ and $w_j$; however it will be more convenient to work with the quantities $\ell_{ij}$.
	
	We refer to a configuration 
	\begin{equation}\label{eq:pattern}
		z,\quad z+v_1,\ldots,z+v_m,\qquad \sum_{i=1}^m v_i=0,
	\end{equation}
	as a simplex--center pattern, note that $z$ is the barycenter of the other $m$ points.  Prescribing
	\begin{equation}\label{eq:edge-equations}
		Q(v_i-v_j)=\la^r\ell_{ij}\qquad(i<j)
	\end{equation}
	makes the simplex $\De(z)=\{z+v_1,\ldots,z+v_m\}$ $\rho$-isometric to $\la\De_o$. This pattern is genuinely non-spherical in the ordinary Euclidean sense as the barycenter $z$ lies in the interior of the convex hull of the vertices.
	
	Write
	\begin{equation}\label{eq:H}
		H_m^n:=\left\{(v_1,\ldots,v_m)\in(\R^n)^m:
		\sum_{i=1}^m v_i=0\right\},
		\qquad D:=(m-1)n,
	\end{equation}
	and introduce the edge map
	\begin{equation}\label{eq:Phi}
		\Phi:H_m^n\longrightarrow\R^E,\qquad
		\Phi(v):=\bigl(Q(v_i-v_j)\bigr)_{i<j}.
	\end{equation}
	We define the notion of a $Q$-regular simplex,
	\begin{defn}[Regular simplex]\label{def:regular}
		The centered simplex $\De$ is \emph{$Q$-regular} if
		\begin{equation}\label{eq:regular}
			\rank\, (Jac\,\Phi(\De))=E.
		\end{equation}
	\end{defn}
	This means that the map $\Phi$ has maximal rank at the simplex $\De=(w_1,\ldots,w_m)\in (\R^n)^m$. Bourgain's theorem \cite{Bourgain86} treats simplices which are non-degenerate in the sense that their vertices are affinely independent. For the standard Euclidean metric and for triangles this rank condition is equivalent to the affine independency of the vertices, but for metrics arising from higher degree forms the latter is not sufficient when $m\geq4$, see \cref{sec:regularity}.
	
	\subsection{The Euclidean setting.} Recall the upper Banach density of a measurable set $A\subset\R^n$
	\[
	\overline d(A):=\limsup_{N\to\infty}\sup_{x\in\R^n}
	\frac{\abs{A\cap(x+[0,N]^n)}}{N^n}.
	\]
	Our first result is the following.
	\begin{thm}\label{thm:euclidean}
		Let $Q$ satisfy the preceding hypotheses and let $\De_o$ be an affinely
		independent, $Q$-regular simplex.  Suppose
		\begin{equation}\label{eq:euclidean-dimension}
			n>E(r-1)2^{r-1}.
		\end{equation}
		If $A\subset\R^n$ is measurable and $\overline d(A)>0$, there is
		$\la_0=\la_0(A,Q,\De_o)$ such that, for every $\la\geq\la_0$, the set $A$
		contains a $\rho$-isometric copy of $\la\De_o$ together with its barycenter, i.e. a  configuration of the form \eqref{eq:pattern} satisfying \eqref{eq:edge-equations}.
	\end{thm}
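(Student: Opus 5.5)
We argue by contradiction, along the lines of Bourgain's approach \cite{Bourgain86} as adapted to non-quadratic forms in \cite{CMP17}.

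\emph{Counting operator.} Fix a smooth bump $\psi$ supported near the point $\Phi(\De_o)=(\ell_{ij})\in\R^E$. For $\la$ large define the normalized surface measure $\si_\la$ on the variety
\[
\{v\in H_m^n:\ \Phi(v)=\la^r\ell\},
\]
restricted to a neighbourhood of $\la\De_o$ (and of its images under $\rho$-isometries near the identity, so the variety piece is $(D-E)$-dimensional). Regularity \eqref{eq:regular} makes this piece a smooth submanifold near $\la\De_o$, so $\si_\la$ is well defined. For $f=\1_A$ localized to a cube of side $N\gg\la$ set
\[
T_\la(f)=\iint f(z)\prod_{i=1}^m f(z+v_i)\,d\si_\la(v)\,dz .
\]
It suffices to show $T_\la(\1_A)>0$ for all large $\la$. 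If this fails, there is a lacunary sequence $\la_1\ll\la_2\ll\cdots\ll\la_J$ with $J$ large in terms of $\de=\overline d(A)$ along which $T_{\la_k}(\1_A)=0$, and we derive a contradiction.

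\emph{Key analytic input (main obstacle).} We need decay of the Fourier transform of $\si_\la$ in the variables $\xi=(\xi_1,\ldots,\xi_m)$ dual to $v\in H_m^n$:
\[
|\widehat{\si_1}(\xi)|\lesssim (1+|\xi|)^{-c},\qquad c>0 .
\]
I would obtain it by writing $\si_1$ via Fourier inversion in the $E$ constraint variables:
\[
\widehat{\si_1}(\xi)=\int_{\R^E}\int_{H}e\Bigl(\sum_{i<j}t_{ij}\bigl(Q(v_i-v_j)-\ell_{ij}\bigr)-\xi\cdot v\Bigr)\psi(v)\,dv\,dt .
\]
The inner oscillatory integral has a phase that is a form of degree $r$ in $v$, namely a combination of the $E$ forms $Q(v_i-v_j)$. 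Weyl differencing ($r-1$ times) together with the non-singularity \eqref{eq:Q-nonsingular} bounds it by $\min(1,|t|^{-K})$, where $K$ is the relevant Birch-type exponent. The hypothesis $n>E(r-1)2^{r-1}$ is exactly what makes
\[
K=\frac{n}{(r-1)2^{r-1}}>E ,
\]
so the $t$-integral converges absolutely. The \emph{singular-series} analysis for the pencil $\sum t_{ij}Q(v_i-v_j)$ is the delicate part: one must show the singular locus of every nontrivial combination has codimension $\gtrsim n$, uniformly in $t$. This is where \eqref{eq:Q-nonsingular} and the edge structure (each $v_i-v_j$ is a coordinate difference) enter. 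Decay in $\xi$ then follows by integrating by parts in $v$ on the region where $|\xi|$ dominates $|t|$.

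\emph{Density increment / energy argument.} With decay in hand, run Bourgain's scheme. For each scale $\la_k$ replace $f$ by a smoothed version $f*\phi_{\eps\la_k}$, and decompose $f=f_1+f_2+f_3$ (smooth, intermediate, and high-frequency pieces relative to $\la_k$).
\begin{itemize}
\item[(i)] The smooth part gives a main term $\gtrsim\de^{m+1}$. Affine independence of $\De_o$ and $z$ being the barycenter are used here via a Varnavides/Hölder-type lower bound: averaging $f_1$ over the configuration at scale $\eps\la$ is essentially constant on small cubes, giving $\de^{m+1}$.
\item[(ii)] The high-frequency part is negligible by the decay of $\widehat{\si_\la}$. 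This uses a multilinear estimate in which one function is put in $L^2$ and the others in $L^\infty$; applying Cauchy--Schwarz in $z$ handles the linear constraint $\sum v_i=0$.
\item[(iii)] The intermediate part is controlled by $\sum_k\|f_2^{(k)}\|_2^2\lesssim\|f\|_2^2$ (almost orthogonality across lacunary scales), so it is small for at least one $k\le J$.
\end{itemize}
That scale $k$ yields $T_{\la_k}>0$, a contradiction. Finally, positivity for $\la$ in the full range (not just a sparse set) follows since failure on an unbounded set produces such lacunary sequences.
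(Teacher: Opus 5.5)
\textbf{There is a genuine gap, in step (ii), and it is structural.} Decay of $\widehat{\si_1}$ lets you discard a single high-frequency input only when the multilinear multiplier becomes small as soon as \emph{one} input frequency is large. That is Bourgain's situation for simplices. Here the $m+1$ points obey the linear relation $mz=\sum_i(z+v_i)$, so the multiplier of your form depends on $\widehat{\si_\la}$ only through the differences $(\xi_i-\xi_m)_{i<m}$. It therefore equals $\widehat{\si_\la}(0)$ on the whole subspace $\xi_1=\cdots=\xi_m=\theta$, $\xi_0=-m\theta$, however large $\abs{\theta}$ is.

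Concretely, take $f_0=e(-m\theta\cdot x)$ and $f_i=e(\theta\cdot x)$ on the cube. Then $f_0(z)\prod_i f_i(z+v_i)\equiv 1$, so no estimate with one high-frequency function in $L^2$ and the rest in $L^\infty$ can hold, with or without Cauchy--Schwarz in $z$. This also happens for indicators. Take $m=2$ and the stripe set $A=\{x:\{\theta\cdot x\}\in[0,1/3)\}$ with $\abs{\theta}\la\gg\eps^{-1}$. The normalized count is $\approx 1/18$ while your main term is $1/27$, so the purely high-frequency part contributes $1/54$, which is not negligible.

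Most decisively, every ingredient you use holds verbatim for $Q(x)=\abs{x}^2$: Fourier decay of the surface measure, smoothing, and Plancherel across lacunary scales. For that $Q$ the theorem is false because the pattern is non-spherical. So your argument cannot be right, and indeed it never uses $r\geq 4$ in any essential way.

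\textbf{What is needed, and what the paper does.} You need control of the error for \emph{arbitrary} bounded $f$, on the kernel side, through a higher-order uniformity norm of the phase. The paper splits $\si_\la=c_\eps\omega_\la+k_\la^\eps+e_\la^\eps$ and writes $\cE_\la^\eps(f)=\int(1-\widehat\psi(\eps t))\,e(-t\cdot\ell)\,\mathcal T_{\la,t}(f)\,dt$. It then isolates an edge $a<b$ with $\abs{C_{ab}(t)}\gtrsim\abs{t}$ (\cref{lem:pair-transform}), so that in $s=(v_a-v_b)/2$ the phase is $C_{ab}(t)Q(s)$ plus lower-order terms. Three Cauchy--Schwarz steps (\cref{lem:pair-U3}) and Birch's Weyl differencing (\cref{lem:Birch-Archimedean}) give $\abs{\mathcal T_{\la,t}(f)}\lesssim N^n\norm{We(P_t)}_{U^3}\lesssim N^n(1+\abs{t})^{-\kappa}$. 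This is exactly where $r\geq 4$ enters, since the $U^3$ norm of a quadratic phase does not decay.

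Once $f$ is no longer split into frequency bands, your other two steps must change as well. The main term is $\cM_\la(\1_A)$ with the full indicator. The paper bounds it below by a Varnavides constant $c(\de)$, not $\de^{m+1}$, which Behrend-type sets rule out. It does so by making $\omega_\la$ positive on tiny configurations (via $\chi_0$ and $\psi>0$ near $-\ell$) and counting solutions of $x_1+\cdots+x_m=mz$. The mean-zero kernels $k_{\la_j}^\eps$ are then summed over lacunary scales using the Muscalu--Tao--Thiele multilinear estimate for the rank-one singular space $\Gamma'$, not by Plancherel on $f$.
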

	
	Note that the exponential dependence of the dimension $n$ on the degree $r$ is a consequence of the generality of the metric $\rho=Q^{1/r}$, for diagonal forms e.g. for the $\ell^r$-metric the dimension depending only quadratically on $r$ using the best available results on the Waring problem \cite{Wooley}, however we will not pursue this special case in this note,
	
	
	The proof of Theorem \ref{thm:euclidean} follows broadly the approach of \cite{CMP17}. It is based on an oscillatory representation of Leray measure \cite{CLM21} of the configuration space and a decomposition of the range of integration into a small, an intermediate and a large region. Correspondingly the counting function of isometric copies of a scaled simplex in the set $A$ is decomposed into three parts; a main part, an intermediate part, and a high-frequency error term. The main term can be estimated below via standard results from additive combinatorics. The error term is controlled by the Gowers $U^3$-uniformity norm of the phase function $e^{itQ}$. The approximating Riemann sum of an appropriate power of a Gowers norm of the phase function $e^{itQ}$ is an exponential sum which can be estimated via Birch's Weyl-differencing method for non-singular forms \cite{Birch62}.  The intermediate terms are treated via estimates from time-frequency analysis, due to Muscalu, Tao and Thiele \cite{MTT02}.
	
	
	\subsection{The discrete setting.} Our main results concern subsets of the integer lattice $\Z^n$. We prove an analogue of Theorem \ref{thm:euclidean}, as well as a quantitative result about the existence of a single $\rho$-isometric copy in a finite $A\subs [N]^n$ of density $\de>0$. Both results can be viewed as extensions of the main results of \cite{MSimplex09}, from simplices to simplex-center configurations with respect to general metrics $\rho$, generated by positive-definite, non-singular forms. 
	
	Here we assume that $\De_o\subs \Z^n$ and $Q:\Z^n\to\Z$, we will refer to both the simplex $\De_o$ and the form $Q$ as being \emph{integral}. This assumption restricts the levels $L=\la^r$ to rational numbers with a fixed denominator depending only on $\De_o$. Indeed, a simplex $\De=\{v_1,\ldots,v_m\}$ is $\rho$-isometric to $\la\De_o$ if $Q(v_i-v_j)=L\ell_{ij}\in\N$ for all pairs $i<j$; we will only consider levels $L\in\N$. Another restriction on the levels is due to the existence of grids. Fixing a density $\de>0$ the grid $A=(q\Z)^n$ has density at least $\de>0$ if $q\leq \de^{-1/n}$ as $q^r\mid Q(v_i-v_j)$ for $v_i,v_j\in A$. Thus a discrete analogue of Theorem \ref{thm:euclidean} can only hold for those levels $L$ which are integer multiples of $q_0(\de):=\operatorname{lcm}\{1\leq q\leq \de^{-1/n}\}^r$, as it have to be valid for all such grids. 
	
	In order to find integer solutions $v=(v_1,\ldots,v_m)\in(\Z^n)^m$ of the diophantine system $Q(v_i-v_j)=L \ell_{ij},\ (1\leq i<j\leq m)$ one needs to make some additional local assumptions on the map $\Phi$. Moreover, have solutions $v=(v_1,\ldots,v_m)\in A^m$ restricted to set $A\subs\Z^n$ of positive upper density, one needs to have an ``abundance" of solutions in $v\in (\Z^n)^m$. The standard analytic tool which can provide such information is the Hardy-Littlewood circle method. Indeed, if the rank of the map $\Phi$ is sufficiently large and it also satisfies certain local conditions then it provides an asymptotic formula for the number of integer solutions to the system $\Phi(v)=L\ell$ \cite{Birch62}, at least for sufficiently large scales $L$. 
	
	Recall that the Birch rank $\cB(\Phi)$ of the map $\Phi$ is defined by 
	\begin{equation}\label{eq:Birch-locus}
		\cB(\Phi):=nm-\dim_{\C}V_\Phi^*\quad\text{where}\quad V_\Phi^*:=\{v\in (\C^n)^m:\rank\,(Jac\, \Phi(v))<E\}
	\end{equation}
	is the so-called singular locus of the map $\Phi$. The following explicit estimate provides a lower bound for the Birch rank of the system $\Phi$, exploiting the non-singularity of the form $Q$.
	
	\begin{prop}[Rank inherited from $Q$]\label{prop:system-rank}
		If $Q$ is non-singular, then
		\begin{equation}\label{eq:rank-lower}
			\cB(\Phi)\geq n-E+1.
		\end{equation}
	\end{prop}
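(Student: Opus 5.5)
The plan is to enclose the singular locus $V_\Phi^*$ in a union of varieties whose dimensions can be bounded using only the non-singularity \eqref{eq:Q-nonsingular}, with a fibration over the vertices. Write $d_{ij}:=v_i-v_j$. The row of $Jac\,\Phi(v)$ indexed by the edge $\{i,j\}$ has $\nabla Q(d_{ij})$ in block $i$, $-\nabla Q(d_{ij})$ in block $j$, and zeros in all other blocks.

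\emph{Step 1: turn rank deficiency into a local dependence.} Suppose $\rank\,(Jac\,\Phi(v))<E$. Then there are coefficients $c=(c_{ij})\neq0$, extended antisymmetrically, with
\[
\sum_{j\neq k} c_{kj}\,\nabla Q(v_k-v_j)=0\qquad\text{for every }k .
\]
Let $k$ be the largest vertex index occurring in an edge of the support of $c$. Every edge in the support that contains $k$ joins $k$ to some $j<k$. So the equation at vertex $k$ is a nontrivial linear relation among $\nabla Q(v_k-v_1),\ldots,\nabla Q(v_k-v_{k-1})$. Consequently
\[
V_\Phi^*\subs\bigcup_{k=2}^m W_k,
\]
where $W_k$ is the set of $v$ for which these $k-1$ vectors are linearly dependent.

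\emph{Step 2: fibration.} Fix $v_1,\ldots,v_{k-1}$ and $v_{k+1},\ldots,v_m$ arbitrarily; these are $(m-1)n$ free coordinates. It remains to show that the fibre
\[
X_a:=\{x\in\C^n:\ \nabla Q(x-a_1),\ldots,\nabla Q(x-a_s)\ \text{linearly dependent}\},\qquad s=k-1\le m-1,
\]
has dimension at most $s-1$ for every choice of $a_1,\ldots,a_s$. This gives $\dim W_k\le (m-1)n+m-2$. Hence $\cB(\Phi)\ge n-m+2\ge n-E+1$, since $E\ge m-1$.

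\emph{Step 3: bound the fibre.} This is where non-singularity of $Q$ enters. I would use the incidence variety
\[
I:=\{(x,[c])\in\C^n\times\mathbb P^{s-1}:\ \textstyle\sum_j c_j\nabla Q(x-a_j)=0\}.
\]
The fibre $X_a$ is the projection of $I$ to the first factor, so it suffices to show that every fibre of $I\to\mathbb P^{s-1}$ over $[c]$ is finite; then $\dim X_a\le\dim I\le s-1$. For fixed $c$ the defining system consists of $n$ polynomial equations in $x$. Their top-degree homogeneous part is $(\sum_j c_j)\nabla Q(x)$. When $\sum_j c_j\neq0$, \eqref{eq:Q-nonsingular} says this top-degree part has only the trivial common zero. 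The standard projective-closure argument then shows that the affine solution set has no points at infinity and is therefore finite.

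\emph{Main obstacle.} The difficulty is the hyperplane $\sum_j c_j=0$, where the leading term cancels. There the next homogeneous part is $-\operatorname{Hess}Q(x)\bigl(\sum_j c_ja_j\bigr)$, and non-singularity of $Q$ alone does not obviously rule out positive-dimensional fibres. I would first try to sidestep this in Step 1 by choosing the vertex $k$ more cleverly. If at the vertex chosen there the coefficients sum to zero, one can use the global identity $\sum_k\sum_j c_{kj}\nabla Q(d_{kj})=0$ together with antisymmetry, or pass to another vertex in the support whose coefficients do not sum to zero. Failing that, I would bound this part of $I$ separately. Restricting to $\sum_j c_j=0$ loses one projective dimension, leaving $\dim\le s-2$. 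The aim would then be to show that the fibres over it have dimension at most one, using the homogeneity of $\nabla Q$ via Euler's identity $x\cdot\nabla Q(x)=rQ(x)$. This still gives $\dim X_a\le s-1$.
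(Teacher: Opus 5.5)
\textbf{There is a genuine gap: Step 2 rests on a false fibre bound.} Your Step 1 is fine, apart from a harmless sign slip: since $\nabla Q$ is odd, a row dependence gives the vertex relations with the \emph{symmetric} extension $c_{kj}=c_{jk}$. The trouble is that Step 2 reduces the proof to the claim $\dim X_a\le s-1$ for every $a$, and this is false. The cancellation you flag as the main obstacle is a real obstruction, not a technicality.

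Take $Q(x)=\sum_\nu x_\nu^r$, $s=2$, $a_1=0$, $a_2=e_1$. Then $\nabla Q(x-a_1)=r(x_1^{r-1},x_2^{r-1},\dots,x_n^{r-1})$ and $\nabla Q(x-a_2)=r((x_1-1)^{r-1},x_2^{r-1},\dots,x_n^{r-1})$. These are equal whenever $x_1^{r-1}=(x_1-1)^{r-1}$, i.e.\ $x_1=\zeta/(\zeta-1)$ with $\zeta^{r-1}=1\neq\zeta$. So $X_a$ contains $r-2$ affine hyperplanes and has dimension $n-1$, not $\le 1$.

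This set is exactly the fibre of your incidence variety $I$ over $[c]=[1:-1]$, where $c_1+c_2=0$. There the next homogeneous part is $\pm\operatorname{Hess}Q(x)e_1=\pm r(r-1)x_1^{r-2}e_1$, which vanishes on the whole hyperplane $x_1=0$ at infinity. So your fallback, that fibres over $\sum_jc_j=0$ have dimension at most one, also fails. Choosing the vertex more cleverly does not rescue it either. For $m\ge4$ a nonzero symmetric $c$ can have every vertex sum $\sum_j c_{kj}$ equal to zero (alternate signs around a $4$-cycle), so no choice of vertex guarantees a non-cancelling leading term.

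\textbf{How the paper avoids the cancellation.} It moves a \emph{pair} of vertices inside $H_m^n$ rather than a single vertex. With $v_a=w/2+s$, $v_b=w/2-s$ and the other vertices fixed, the top-degree part of $t\cdot\Phi$ in $s$ is $C_{ab}(t)Q(s)$, where $C_{ab}(t)=2^rt_{ab}+\sum_{j\ne a,b}(t_{aj}+t_{bj})$. The linear map $t\mapsto(C_{ab}(t))_{a<b}$ is invertible, with eigenvalues $2^r-2$, $2^r+m-4$, $2^r+2m-4$. Hence for each $t\ne0$ some $C_{ab}(t)\ne0$, and the critical set of $t\cdot\Phi$ is finite over the remaining $D-n$ coordinates. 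Projecting the incidence variety over $\mathbb P^{E-1}$ then gives $\dim V_\Phi^*\le D-n+E-1$.

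\textbf{An alternative repair that keeps your Step 1.} Fibre over $v_k$ instead of over the $a_j$. In the coordinates $y_j=v_k-v_j$ ($j<k$) you get $W_k\cong\C^n\times Y_{k-1}\times(\C^n)^{m-k}$, where
$Y_{k-1}=\{(y_1,\dots,y_{k-1}):\nabla Q(y_1),\dots,\nabla Q(y_{k-1})\ \text{dependent}\}$.
This $Y_{k-1}$ is the preimage under $(\nabla Q)^{\times(k-1)}$ of the rank-deficient $n\times(k-1)$ matrices, which have codimension $n-k+2$. Non-singularity makes $\nabla Q$ finite-to-one, so $\dim W_k\le(m-1)n+k-2$. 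This does yield your stronger bound $\cB(\Phi)\ge n-m+2$. Your total dimension count was right; only the uniform fibre bound fails, because the fibre dimension jumps over special $a$ such as $a_2-a_1=e_1$.
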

	
	The proof of Proposition \ref{prop:system-rank} is will be given Section 4. As a corollary; in dimensions $n$ satisfying 
	\begin{equation}\label{eq:lattice-dimension}
		n-E\geq E(E+1)(r-1)2^{r-1},
	\end{equation}
	one has the lower bound
	\begin{equation}\label{eq:birch-threshold}
		\cB(\Phi)>E(E+1)(r-1)2^{r-1}.
	\end{equation}
	This rank condition allows the use of the Birch's circle method for the diophantine system $\Phi(v)=L\ell$ which is crucial for our main result in the discrete settings. 
	
	One also needs some local conditions satisfied by the system $\Phi$. Let $\La=H_m^n\cap \Z^{nm}$. For a prime $p$, define 
	\begin{equation}\label{eq:local-density}
		\sigma_p(L):=\lim_{s\to\infty}p^{-s(D-E)}
		\#\left\{v\in \La/p^s\La:\ 
		\Phi(v)\equiv L\ell\pmod{p^s}\right\}
	\end{equation}
	when the limit exists, and let $\mathfrak{S}(L):=\prod_{p\ prime}\si_p(L)$.
	
	Under \eqref{eq:birch-threshold}, the standard complete exponential-sum estimates used in Birch’s method give $\si_p(L)=1+o(p^{-2})$, at all sufficiently large primes \cite{Birch62} . At each of the finitely many remaining primes, Hensel lifting from a non-singular zero gives uniform positive lower bounds once$L$ is divisible by a sufficiently large power of $p$. Together with the Chinese remainder theorem, these facts yield the following standard lemma.
	
	\begin{lem}\label{lem:Q-regularity} Let $Q:\Z^n\to\Z$ be a positive-definite, non-singular form and let $\De$ be a $Q$-regular simplex. Assume that the dimension $n$ satisfies \eqref{eq:lattice-dimension}. If for all primes $p$ there exists $v^{(p)}\in {Q}_p^{nm}$ such that
		\begin{equation}\label{eq:local-nonsingular-zero}
			\Phi(v^{(p)})=0,\quad rank\,(Jac\,\Phi(v^{(p)}))=E,
		\end{equation}
		then one has 
		\begin{equation}\label{eq:uniform-zero}
			\beta_*:=\inf_{q\geq1}\beta_q(0)>0,
		\end{equation}
		for the quantities
		\begin{equation}\label{eq:zero-density}
			\beta_q(0):=q^{-(D-E)}\,
			\#\left\{v\in \La/q\La:
			\Phi(v)\equiv0\pmod q\right\},
		\end{equation}
		and there exist $q_o\in\N$ and $c,C>0$ such that
		\begin{equation}\label{eq:uniform-local}
			c\leq\mathfrak S(L)\leq C,\quad\text{whenever}\ q_o\mid L.
		\end{equation}
	\end{lem}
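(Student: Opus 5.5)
We split the statement into two parts: the uniform lower bound \eqref{eq:uniform-zero} for $\beta_q(0)$, and the two-sided bound \eqref{eq:uniform-local} for $\mathfrak S(L)$.

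\emph{Step 1: reduction to prime powers.} By the Chinese remainder theorem the counting function in \eqref{eq:zero-density} is multiplicative in $q$. Hence
\[
\beta_q(0)=\prod_{p^s\| q}\beta_{p^s}(0),
\]
and it suffices to bound $\beta_{p^s}(0)$ from below uniformly in $p$ and $s$, with $\prod_p$ of these lower bounds positive.

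\emph{Step 2: large primes.} We use Birch's complete exponential sum estimates. Write
\[
\beta_{p^s}(0)=\sum_{t\le s}\ \sum_{a\in(\Z/p^t)^E,\ p\nmid a} S(a,p^t),
\]
where $S(a,q)=q^{-D}\sum_{v\in\La/q\La}e\bigl(a\cdot\Phi(v)/q\bigr)$. The rank condition \eqref{eq:birch-threshold}, which follows from Proposition \ref{prop:system-rank} and \eqref{eq:lattice-dimension}, gives
\[
|S(a,p^t)|\ll p^{-t(E+1+\eta)}
\]
for some $\eta>0$, by Weyl differencing applied to $a\cdot\Phi$ on the lattice $\La$. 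Summing over the $p^{tE}$ values of $a$ then gives
\[
|\beta_{p^s}(0)-1|\ll p^{-1-\eta},
\]
uniformly in $s$. The same estimate holds for $\sigma_p(L)$ with any $L$. Consequently there is $p_0$ such that for $p>p_0$ we have $\tfrac12\le 1-Cp^{-1-\eta}\le\beta_{p^s}(0)\le 1+Cp^{-1-\eta}$. The infinite product over $p>p_0$ therefore converges to a positive limit, and the corresponding tail of $\mathfrak S(L)$ lies between two absolute constants.

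\emph{Step 3: small primes.} This is the main obstacle. For each $p\le p_0$, take the hypothesized nonsingular $p$-adic zero $v^{(p)}$. Since $\Phi$ is homogeneous, we may scale $v^{(p)}$ into $\Z_p^{nm}\cap H_m^n$. Here I am reading the hypothesis as $v^{(p)}$ lying in $H_m^n$ and the Jacobian being taken along $H_m^n$.

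Some $E\times E$ minor of $Jac\,\Phi(v^{(p)})$ has $p$-adic valuation $\tau_p<\infty$. The multivariable Hensel lemma then gives the following: every $v\equiv v^{(p)}\pmod{p^{2\tau_p+1}}$, with the $D-E$ free coordinates chosen arbitrarily modulo $p^s$, lifts to solutions modulo $p^s$, and each residue class contributes at least $p^{s(D-E)}p^{-c\tau_p}$. This yields
\[
\beta_{p^s}(0)\ge p^{-(2\tau_p+1)D}>0
\]
uniformly in $s$. Combined with Step 2, this proves \eqref{eq:uniform-zero}. The care needed is in counting solutions in the lattice $\La$ rather than in $\Z^{nm}$, which amounts to choosing a $\Z$-basis of $\La$, and in checking that the lifting count is uniform in $s$.

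\emph{Step 4: $\mathfrak S(L)$.} Put
\[
q_o=\prod_{p\le p_0}p^{r(2\tau_p+1)+\kappa_p},
\]
with $\kappa_p$ large enough. If $q_o\mid L$, then $L\ell\equiv0\pmod{p^{2\tau_p+1}}$ in the relevant range. The same Hensel argument, now started from $v^{(p)}$ and solving $\Phi(v)=L\ell$ with $L\ell$ a small $p$-adic perturbation of $0$, gives $\sigma_p(L)\ge p^{-(2\tau_p+1)D}$. Here homogeneity can also be used: if $L=\mu^r L'$ we may rescale. The upper bound $\sigma_p(L)\le C_p$ for small $p$ follows from the complete sum expansion above, using $\sum_t p^{tE}\sup_a|S(a,p^t)|<\infty$. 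Multiplying the finitely many small-prime factors with the convergent tail from Step 2 gives $c\le\mathfrak S(L)\le C$ whenever $q_o\mid L$, which is \eqref{eq:uniform-local}.
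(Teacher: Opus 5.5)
Your proposal is correct and takes the same approach as the paper's sketch in the paragraph before the lemma: CRT multiplicativity, Birch's complete exponential-sum bounds at large primes, and Hensel lifting from the nonsingular $p$-adic zero at the finitely many remaining primes, once a suitable power of $p$ divides $L$. Your large-prime estimate $1+O(p^{-1-\eta})$ is what Birch's bound actually gives under \eqref{eq:birch-threshold}, it is enough for the products to converge, and the paper's stated $1+o(p^{-2})$ is stronger than needed.
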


	\begin{rem}
		Let us remark that condition \eqref{eq:local-nonsingular-zero} automatically holds for all but finitely many primes. Indeed, for every sufficiently large prime of good reduction, the Birch-rank estimate gives  $p^{nm-E}(1+o(1))$ solutions of $\Phi(v)=0$, $v\in (\F_p^n)^m$, while the singular solutions form a lower-dimensional variety. Hence a non-singular solution $v^{(p)}$ exists modulo $p$. By a translation of the coordinates of $v^{(p)}$ by its center, one obtains a non-singular solution in $H_m^n(\mathbb Q{_p})$, which remains non-singular as the map $\Phi$ is invariant under such translations, at least for $p\nmid $. Then by Hensel's lemma it can be lifted to $(\Z_p^n)^m$, hence \eqref{eq:local-nonsingular-zero} needs to be verified only at finitely many exceptional primes.
	\end{rem}
	
	Our first result in the discrete settings is the following,
	\begin{thm}\label{thm:lattice-large scales}
		Let $Q:\Z^n\to\Z$ be a positive-definite, convex, non-singular form of degree $r>2$, such that the associated map $\Phi$ satisfies the local condition \eqref{eq:local-nonsingular-zero}, and let $\De_o$ be a $Q$-regular simplex. If
		\begin{equation}\label{eq:lattice-dimension-thm}
			n-E\geq E(E+1)(r-1)2^{r-1}.
		\end{equation}
		then the following holds for any set $A\subset\Z^n$ has positive upper Banach density $\delta>0$. 
		
		There are positive integers $q=q(\delta,Q,\De)$ and $L_0=L_0(A,Q,\De)$ such that, for every 
		\[
		L\geq L_0, \quad q\mid L,
		\]
		the set $A$ contains a barycentric configuration $\De(z)=\{z,z+v_1,\dots,z+v_m\}$ satisfying the equations
		\begin{equation}\label{eq:lattice-system}
			\sum_{i=1}^m v_i=0,\qquad
			Q(v_i-v_j)=L\ell_{ij}\quad(1\leq i<j\leq m).
		\end{equation}
	\end{thm}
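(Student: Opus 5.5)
We argue by contradiction, adapting Magyar's discrete simplex argument \cite{MSimplex09} to the system $\Phi(v)=L\ell$ on the lattice $\La=H_m^n\cap\Z^{nm}$. Suppose there is an infinite sequence of admissible levels $L_1<L_2<\cdots$ with $q\mid L_k$ at which $A$ contains no configuration. Here $q=q_o\cdot q_1(\de)$, where $q_o$ comes from \cref{lem:Q-regularity} and $q_1(\de)$ is a large power of $\operatorname{lcm}\{1\le s\le S(\de)\}$.

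For each large $L$ we introduce the counting form
\[
\cN_L(f_0,f_1,\ldots,f_m)=\sum_{z\in\Z^n}\ \sum_{v\in\La,\ \Phi(v)=L\ell} f_0(z)\prod_{i=1}^m f_i(z+v_i),
\]
normalized by the expected count $N^n L^{(D-E)/r}$. The circle method of Birch, which applies by \cref{prop:system-rank} and \eqref{eq:lattice-dimension-thm}, gives an asymptotic formula for $\#\{v:\Phi(v)=L\ell,\ v\in[-CL^{1/r},CL^{1/r}]^{nm}\}$ with main term $\mathfrak S(L)J(L)$, where $J(L)\asymp L^{(D-E)/r}$ is the singular integral. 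By \cref{lem:Q-regularity}, $\mathfrak S(L)\ge c$, and $Q$-regularity of $\De_o$ gives $J>0$.

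The key step is a decomposition of the Fourier multiplier of the measure $\si_L$ (the normalized indicator of $\{v\in\La:\Phi(v)=L\ell\}$) into major- and minor-arc parts:
\[
\widehat{\si_L}(\xi)=\sum_{a/s,\ s\le S}G(a/s)\,\psi(\xi-b/s)\,\widetilde{d\si_L}(\xi-b/s)+\mathcal E_L(\xi),
\]
with $\|\mathcal E_L\|_\infty\lesssim L^{-\eta}$. The error bound comes from Birch's Weyl differencing together with the rank bound \eqref{eq:birch-threshold}. Since the configuration involves the centre $z$ and the linear constraint $\sum v_i=0$, the relevant operator is not a single convolution. We therefore parametrize $v_m=-\sum_{i<m}v_i$ and, using the exponential-sum representation of $1_{\Phi(v)=L\ell}$, bound the minor-arc contribution to $\cN_L$ by $L^{-\eta}\prod\|f_i\|_2$-type quantities. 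Concretely, we apply Cauchy--Schwarz in $z$ and then in the $v_i$, van der Corput style, so that a Gowers-type $U^2$/$U^3$ norm of $e(\theta Q)$ appears, exactly as in the Euclidean proof. The minor arcs then reduce to the Birch estimate for the differenced system.

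The major-arc part is handled by a multiscale argument. For levels $L$ divisible by $q$, the local factors $G(a/s)$ with $s\le S(\de)$ become $\beta_s(0)$-type quantities, because $\Phi(v)\equiv0 \pmod s$ on the relevant residue classes. Restricting to the sublattice $(s\Z)^n$ then makes the count behave like the continuous one on a coarse grid. The Euclidean \cref{thm:euclidean} (or its finitary version) gives a lower bound $c(\de)>0$ for the continuous counting form, and $\beta_*>0$ from \eqref{eq:uniform-zero} transfers this to the lattice.

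The main obstacle is the intermediate range: major arcs at frequencies between $1/N$ and $L^{-1/r}$. There the counting form is a genuinely multilinear (Brascamp--Lieb/Hilbert-type) expression, because of the barycentre constraint. We handle it as in the Euclidean case by pigeonholing over scales. Write $f=1_A$ and smooth it at scales $L_{k}^{1/r}\ll L_{k+1}^{1/r}$. The energy decomposition $\sum_k\|f*\phi_{k}-f*\phi_{k+1}\|_2^2\le\|f\|_2^2$ then produces a scale $L_k$ where the intermediate contribution is $\le c(\de)/4$. For this we need the Muscalu--Tao--Thiele type bound controlling the trilinear (multilinear) form by $\|f*\phi_k-f*\phi_{k+1}\|_2$. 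At that scale $\cN_{L_k}(1_A,\dots)>0$, which contradicts the choice of the sequence. Finally, $L_0$ depends on $A$ because of the infinite sequence argument, while $q$ depends only on $\de,Q,\De$.
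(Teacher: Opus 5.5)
Your outline (contradiction along a lacunary sequence of missing levels, major arcs collected at a common denominator $q\mid L$, minor arcs reduced by Cauchy--Schwarz to Gowers norms and Birch's estimate) matches the paper's. However, the two steps that carry the argument fail as you describe them.

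\emph{The main term.} Summing the major arcs over $a\in[q]^E$ gives the kernel $\chi_q(v)\om_P(v)$ with $\chi_q(v)=q^E\1_{\{\Phi(v)\equiv0\pmod q\}}$. The argument needs $\cM^{\Z}_{q,L}(\1_A)\geq c(\de)$ \emph{uniformly in $q$}, because the height of the major arcs, and hence $q$, is fixed only after the error has been made smaller than this constant. Restricting to $v\in(q\Z^n)^m$, where $\chi_q\equiv q^E$, and counting linear patterns on the coarse grid gives only $c(\de)q^{-(D-E)}$, so that choice becomes circular. In any case, \cref{thm:euclidean} is an existence statement and supplies no count. Nor can $\beta_*$ be used residue class by residue class: for a fixed null residue $u\not\equiv0$ the number of patterns in $A$ with $v\equiv u\pmod q$ can be zero (take $A\subset2\Z^n$ and $u$ with an odd entry). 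The missing idea is the paper's dilation trick. Take the $\gtrsim\beta_*P^Dq^{-E}$ small frames $R$ with $\Phi(R)\equiv0\pmod q$, note that $\Phi(tR)=t^r\Phi(R)\equiv0\pmod q$, and apply the multidimensional Szemer\'edi theorem to the coefficient sets $\{u\in[K]^{m-1}:x+\sum_ju_jr_j\in A\}$ to find homothetic copies of $\{0,e_1,\dots,e_{m-1},-\sum_ie_i\}$. This yields $\gtrsim\beta_*N^nP^Dq^{-E}$ configurations with bounded multiplicity, i.e.\ $\cM^{\Z}_{q,L}(\1_A)\gtrsim\beta_*$ independently of $q$.

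\emph{The intermediate scales.} Because of the constraint $\sum_i v_i=0$, the symbol of the form depends on the input frequencies only through the differences $\xi_i-\xi_m$. It is therefore singular along the subspace $\Gamma'$, not at the origin; for $m=2$ the pattern is the progression $z,z\pm v$. Consequently no bound of the form $|\cK^{\eps,\Z}_{q,L_k}(f)|\lesssim\|f*\phi_k-f*\phi_{k+1}\|_2$ holds, and MTT does not provide one. For example, take $f=\frac12+\frac14\cos(2\pi\Theta)+\frac14\cos(4\pi\Theta)$ with $\Theta(x)=\xi_0\cdot x+c(b\cdot x)^2P^{-2}$ and $|\xi_0|\gg N/P^2$. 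Then $f$ has essentially no Littlewood--Paley energy at scales near $P$, yet $\Theta(z+v)+\Theta(z-v)-2\Theta(z)=2c(b\cdot v)^2P^{-2}$ makes the intermediate term of order one for suitable $b,c$. So Bourgain-style energy pigeonholing cannot work.

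What is needed is a square-function bound $\sum_{j\le J}|\cK^{\eps,\Z}_{q,L_j}(f)|^2\lesssim_{\eps,q}1$, uniform in $J$. After Cauchy--Schwarz in $z$ one gets a $2m$-linear form with symbol $\sum_j\widehat{k^\eps_1}(\la_j\eta)\overline{\widehat{k^\eps_1}(\la_j\eta')}$, uniformly of Mikhlin type relative to the rank-one space $\Gamma'$. This is bounded by the Muscalu--Tao--Thiele theorem and transferred to $\Z^n$ by Poisson summation and Villarroya's theorem. The factor $\chi_q$ is removed by a finite Fourier expansion at cost $q^{D+2E}$, which is harmless since $q$ is fixed before $J$. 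Each missing level forces $|\cK^{\eps,\Z}_{q,L_j}(f)|\gtrsim c(\de)$, a contradiction for large $J$.

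A smaller point: once the major arcs stop at a fixed height, the minor-arc error is not $O(L^{-\eta})$. Arcs with denominators between that height and a power of $P$ only save a power of the height. What makes your Cauchy--Schwarz reduction work is the dyadic pruning of \cref{lem:pruned-lattice}, where $\kappa>E(E+1)$, i.e.\ \eqref{eq:lattice-dimension-thm}, is used, together with the edge-isolation \cref{lem:pair-transform}, which guarantees a nondegenerate coefficient of $Q$ in the isolated variable.
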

	
	The main conclusion of Theorem \ref{thm:lattice-large scales} is that the set $A$ contains a simplex $\De$ together with its barycenter that is $\rho$-isometric to $\la\De_0$ for all $\la=L^{1/r}$, for all sufficiently large $L$ divisible by a fixed modulus $q$, which depends only on the initial data $\de,Q$ and $\De_o$. We will refer to such levels $L$, and corresponding and scales $\la=L^{1/r}$ as \emph{admissible} scales. 
	This compares to the main result of \cite{MSimplex09}, the crucial extra conclusion being that side vectors $v_1,\ldots,v_m$ satisfy the linear relation $v_1+\ldots+v_m=0$, and the isometry is defined with respect to metrics generated by positive-definite convex forms.
	
	The proof of Theorem \ref{thm:lattice-large scales} is based on a combination of elements of the the proof in the Euclidean case and arithmetic inputs from the Hardy-Littlewood circle method. If $A\subs [N]^n$ then the count of barycentric configurations $\De(z)\subs A$ isometric $\la \De_o$ is given by the quantity 
	\begin{equation}\label{eq:counting function}
		\NN_\la(A,\De_o):=\sum_{z\in \Z^n}\sum_{v\in H_m^n} f(z)f(z+v_1)\ldots f(z+v_m)\,\1_{\{\Phi(v)=L\ell\}}\end{equation}
	Our approach starts with the standard representation of the kernel
	\[
	\1_{\{\Phi(v)=L\ell\}}=\int_{\T^E} e (\al (\Phi(v)-L\ell)\,d\al,\qquad (e(\be):=e^{2\pi i\be}),
	\]
	and then a standard major/minor arcs decomposition of the phase variables $\al$ corresponding to the level $L$. The contribution of the minor arcs can be reduced after a change of variables, via three applications of the Cauchy-Schwarz inequality, to estimating the Gowers $U^3$ uniformity norm of the phase functions $e(\al Q(s))$, similarly as in the Euclidean setting. The $U^3$-norm is bounded by the $U^{r-1}$-norm which can be estimated via Weyl-differencing, already implicit in \cite{Birch62}. 
	
	The family of the majors arcs are enlarged to be centered at rational points with a fixed denominator $q=q(\de)$ and further decomposed into a small and an intermediate boxes. The integral over the small boxes provides the main term. The crucial difference from the Euclidean case that the corresponding kernel contains a local arithmetic factor $\chi(v)=q^E\,\1_{\{\Phi(v)\equiv 0\pmod{q}\}}$. The lower bound for the main term then is proved using the local condition \eqref{eq:uniform-zero} together with the finite multidimensional Szemer\'edi theorem. It is crucial that the lower bound is independent of the denominator $q$. This is different from the continuous case and does not provide effective quantitative lower bounds.
	
	Finally, the intermediate terms are handled by Poisson summation and Villarroya's transference theorem \cite{Villarroya11}, which provides a bridge between estimates for continuous multi-linear multipliers and their discrete analogues. This leads to an estimate for the total sum of the intermediate terms across a family of scales $\la_1\ll\ldots\ll\la_J$ by a quantity independent of the number of scales $J$; a standard indirect argument then finishes the proof.

	
	Our next result provides an effective bound on $N$ such that every $A\subs [N]^d$ of density $\de>0$ necessarily contains a barycentric configuration $\De(z)=\{z,z+v_1,\ldots,z+v_m\}$, such the simplex $\De=\{\{z,z+v_1,\ldots,z+v_m\}$ is $\rho$-isometric to $\la\De_o$ for some scale $\la>0$. We will refer to such a barycentric configuration $\De(z)$ $\rho$-\emph{similar copy} of $\De_o$. 
	
	\begin{thm}\label{thm:lattice-similar}
		Let $\de>0$ and let $\De_o\subs \Z^n$ be a fixed simplex.  Assume the form $Q$, the associated map $\Phi$, the simplex $\De_o$ and the dimension $n$ satisfy the hypotheses of Theorem \ref{thm:lattice-large scales}.
		
		Then there exists a constant $C=C(n,m,Q,\De_o)$ such that if $N\geq N(\de):=\exp\exp\,(C\,\de^{-C})$, then every set $A\subs [N]^n$ of size $|A|\geq \de N^n$ contains a barycentric configuration $\De(z)$ which is a $\rho$-similar copy of $\De_0$.
	\end{thm}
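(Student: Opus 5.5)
Following the template of \cite{MSimplex09}, I would prove the theorem by a density-increment (energy-increment) argument over a sequence of admissible scales, using the quantitative parts of the proof of Theorem \ref{thm:lattice-large scales}. The only non-quantitative input there is the finite multidimensional Szemer\'edi theorem, used for the main term. So the plan is to keep every other estimate explicit in $\de$ and to pick scales so that the main term gets a lower bound with polynomial dependence on $\de$.

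Fix $A\subs[N]^n$ with $|A|\geq\de N^n$, write $f=\1_A$, and put $q=q(\de)$ as in Theorem \ref{thm:lattice-large scales}, enlarged by the modulus $q_o$ from Lemma \ref{lem:Q-regularity}; then $\log q\ll \de^{-C}$. Choose lacunary scales $L_1\ll L_2\ll\dots\ll L_J$, all divisible by $q$, with $\la_j=L_j^{1/r}\leq \eps N$. For each $j$ split the counting function \eqref{eq:counting function} as
\[
\NN_{\la_j}(A,\De_o)=\mathcal M_j+\mathcal I_j+\mathcal E_j,
\]
where $\mathcal M_j$ is the small-box major arc term, $\mathcal I_j$ the intermediate boxes, and $\mathcal E_j$ the minor arcs. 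Two of these are already quantitative:
\begin{itemize}
\item The minor arc bound via the $U^3$, hence $U^{r-1}$, norm of $e(\al Q(s))$ and Birch's Weyl differencing gives $|\mathcal E_j|\leq C L_j^{-c}N^n\la_j^{D-rE}$, which is negligible once $L_j\geq \de^{-C}$.
\item The transference estimate of \cite{Villarroya11} gives $\sum_j|\mathcal I_j|\leq C\,\eps^{c}\,N^n\la^{D-rE}$ after normalization, uniformly in $J$.
\end{itemize}
Pigeonholing, I then pick $J\approx \de^{-C}$ and find some $j$ with $|\mathcal I_j|$ below $\tfrac12$ of the expected main term, provided the main term is at least $c(\de)N^n\la_j^{D-rE}$.

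The main term is the hard step. After Poisson summation, $\mathcal M_j$ becomes an average of
\[
f(z)\,f(z+v_1)\cdots f(z+v_m)
\]
against a smooth kernel at scale $\la_j\eta$ (with a small parameter $\eta$), weighted by the local factor $\chi(v)=q^E\1_{\{\Phi(v)\equiv0\pmod q\}}$. To avoid Szemer\'edi, I restrict to the sublattice of $v$ lying in $q\La$ shifted by a fixed nonsingular solution modulo $q$; Lemma \ref{lem:Q-regularity} guarantees its proportion is at least $\beta_*$. Along such a lattice, the continuous analogue of the simplex-center count, namely $f$ averaged over $z$ and over $v$ near a fixed $\la\De_o$ configuration, can be bounded below by $c\,\de^{m+1}$ times the volume. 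To do this, pass to a function smoothed at scale $\eta\la_j$, using the Euclidean-setting argument of Theorem \ref{thm:euclidean} with effective constants. Specifically, the configuration is an affine image of a positive-measure neighbourhood of $(0,w_1,\dots,w_m)$, and a Cauchy--Schwarz/Varnavides-type averaging gives $\gg\de^{m+1}$ when $f$ is replaced by its average at the small scale. The error from this smoothing is again an intermediate-type term, absorbed into $\mathcal I_j$.

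The main obstacle is exactly this last point: making the main term lower bound uniform in $q$ while polynomial in $\de$. The arithmetic weight restricts $z$ and $v$ to residue classes mod $q$, and $A$ might be badly distributed among them. To handle this, I would first pass to a progression $x_0+q'\Z^n$ with $q'\mid q$ on which $A$ has density at least $\de$; that loses only a factor $q^{n}$ in size, which is affordable because $q\leq\exp(\de^{-C})$. I would then rescale. Collecting the requirements, we need $N\geq q^{C}\,L_J^{C}$, with $L_{j+1}\geq L_j^{C}$, $J\approx\de^{-C}$ and $\log q\ll\de^{-C}$. These give $\log N\gg C^{J}\de^{-C}$, i.e.\ $N\geq\exp\exp(C\de^{-C})$, as claimed.
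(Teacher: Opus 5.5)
Your outline has a genuine gap at the step you flag yourself: a lower bound for the main term that is uniform in $q$. The paper states explicitly that it has no direct bound of this kind when $m\geq3$.

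Restricting $v$ to one coset $v^{(0)}+q\La$ does not keep a proportion $\beta_*$. Under the weight $\chi_q=q^E\1_{\{\Phi(v)\equiv0\pmod q\}}$ a single coset carries mass $q^{E-D}$, whereas \cref{lem:Q-regularity} only bounds the total mass $\beta_q(0)$ of all null classes in $\mathcal S_q$. Since $q=q(\eps)$ must absorb every denominator up to a power of $\eps^{-1}$, and the remaining arcs contribute $O(\eps^c)$ rather than $O(L^{-c})$, a main term of size $q^{E-D}c(\de)$ can never dominate. (Arcs with denominators between $M_0$ and a power of $P$ are not small in $L$.)

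If instead you keep all of $\mathcal S_q$, the class-$u$ contribution sees $A$ on the residue classes of $z,z+u_1,\dots,z+u_m$ modulo $q$ near $z$. So you need $A$ to be locally equidistributed modulo $q$. Passing once to a progression $x_0+q'\Z^n$ on which $A$ has density $\geq\de$, and rescaling, does not achieve this: the rescaled set can be just as badly distributed, and repeating without a density gain need not terminate.

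The error from replacing $f$ by its average at scale $\eta\la_j$ is also not an intermediate term. Its multiplier does not vanish on the singular space $\Gamma'$ (all vertex frequencies equal), which is exactly where the mean-zero kernels $k^\eps$ are harmless, so \cref{prop:disc-middle} says nothing about it. It is the full Roth-type error for $x_1+\dots+x_m=mz$. A count bound $\gg\de^{m+1}$ for that equation is false in general: Behrend-type sets built from lattice points on a sphere have no nontrivial solutions, by strict convexity.

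The intermediate estimate you quote is not available either. What holds is the $\ell^2$ bound $\sum_j|\cK_j|^2\lesssim\eps^{-C}q^{D+2E}$, which grows with $q$. So you would need $J\approx\exp(\de^{-C})$ scales, not $\de^{-C}$, and no bound of the form $\sum_j|\cK_j|\lesssim\eps^c$ exists.

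What is missing is an actual density increment; your outline announces one but carries out only a single pigeonhole pass over scales. The paper proves the statement as \cref{thm:quantitative-lattice} with $R_0=1$. For a block of $J_\rho\leq\exp(\rho^{-C})$ lacunary levels divisible by $q_\rho$, \cref{prop:quant-block} gives either a copy at one of the levels or a density increment $\kappa_\rho$. The increment arises in one of two ways:
\begin{enumerate}
\item Some cell $a+q(b+[1,H/q]^n)$, with $H=C^*_\rho P$, has density at least $\rho+\kappa_\rho$; this is already an increment.
\item All cells have density at most $\rho+\kappa_\rho$. Then the main term $\cM^{\Z}_{q,L}(\1_A,F_{H,q},\dots,F_{H,q})$ is $\geq c\,\beta_q(0)\rho^{m+1}$ uniformly in $q$ (\cref{lem:quant-coarse-main}). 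Here $F_{H,q}$ averages $\1_A$ along residue classes at a scale $H$ \emph{larger} than $P$. The replacement error $\cD_j$ is controlled by residue-averaged local $U^2$ norms of $\1_A-F_{H,q}$, since the system has complexity one. When that norm is large, \cref{lem:local-U2-increment} converts it into an increment on a scalar grid $b+qs[1,M]^n$ with $M\gtrsim(P/q)^\theta$.
\end{enumerate}
The paper iterates this at most $\de^{-C}$ times, organizing the levels in a tree of scales. Each step costs a power $\theta=1/(n+1)$ of the scale plus $\exp(C\rho^{-C})$ in the logarithm, which yields $N(\de)=\exp\exp(C\de^{-C})$. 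The double exponential comes from this iteration depth, not from super-lacunary scales $L_{j+1}\geq L_j^C$.
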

	
	Note that when the metric $\rho$ is the standard Euclidean metric generated by $Q(x)=|x|^2$, this is an
	ordinary Euclidean similar copy with dilation $\la$, and $z$ is the
	barycenter of its vertices.  For a general form $Q$, the terminology does not assert that a linear map carries $\la\De_o$ to $\De$.
	
	\section{Further barycentric configurations}
	
	The simplex-center configuration is a special case of general barycentric configurations $\De_{k,l}$ consisting of a simplex of $k$-points together with the barycenters of all of its $l$-faces. Much of the main ingredients of our proof extends to these configurations; a lower bound for the main term using a quantitative version of Szemer\'edi's theorem or its multi-dimensional extension and error term estimate via estimating Gowers norms. The missing ingredient is the applicability of a result in time frequency analysis that would provide a uniform bound on the sum of the intermediate terms over the scales $\la_j$ $(1\leq j\leq J)$, independent of $J$. However, it is plausible that good quantitative bounds are provided via the multi-scale orthogonality methods of Durcik-Kova\v{c} \cite{DK22} both in the Euclidean and discrete setting. We plan to address  general barycentric configurations in future work.
	
	\bigskip
	
	\section{Regularity with respect to the form $Q$.}
	\label{sec:regularity}
	
	Put
	\begin{equation}\label{eq:gij}
		g_{ij}:=\nabla Q(a_i-a_j),\qquad g_{ji}:=-g_{ij}.
	\end{equation}
	For $h=(h_1,\ldots,h_m)\in H_m^n$,
	\begin{equation}\label{eq:DPhi}
		\bigl(Jac\,\Phi(\De)h\bigr)_{ij}=g_{ij}\cdot(h_i-h_j).
	\end{equation}
	
	\begin{prop}[$Q$-regularity criterion]\label{prop:Q-test}
		The simplex $\De$ is $Q$-regular if and only if the only symmetric edge
		weights $\omega_{ij}=\omega_{ji}$ satisfying
		\begin{equation}\label{eq:stress}
			\sum_{j\ne i}\omega_{ij}g_{ij}=0
			\qquad(1\leq i\leq m)
		\end{equation}
		are $\omega_{ij}=0$.
	\end{prop}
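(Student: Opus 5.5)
The plan is to identify $Q$-regularity with surjectivity of the linear map $J:=Jac\,\Phi(\De):H_m^n\to\R^E$ given by \eqref{eq:DPhi}, and then to dualize. Since $\R^E$ has dimension $E$, the condition $\rank J=E$ is equivalent to the transpose $J^T:\R^E\to (H_m^n)^*$ being injective. This in turn means that no nonzero $\omega\in\R^E$ annihilates the image of $J$. So I will compute the condition $\omega\cdot Jh=0$ for all $h\in H_m^n$ and show that it is exactly the stress equations \eqref{eq:stress}.

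For $\omega=(\omega_{ij})_{i<j}$, extended symmetrically by $\omega_{ji}=\omega_{ij}$, and $h\in H_m^n$, I expand using \eqref{eq:DPhi}:
\[
\sum_{i<j}\omega_{ij}\,g_{ij}\cdot(h_i-h_j)=\sum_{i=1}^m h_i\cdot\Bigl(\sum_{j\neq i}\omega_{ij}g_{ij}\Bigr).
\]
This regrouping uses $g_{ji}=-g_{ij}$. The term $-\omega_{ij}g_{ij}\cdot h_j$ equals $\omega_{ji}g_{ji}\cdot h_j$, so each unordered pair contributes once to the inner sum at $i$ and once at $j$. Set $F_i:=\sum_{j\ne i}\omega_{ij}g_{ij}\in\R^n$. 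Then $\omega\perp \operatorname{im}J$ if and only if $\sum_i h_i\cdot F_i=0$ for every $(h_1,\dots,h_m)$ with $\sum_i h_i=0$.

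The one delicate point is the restriction to $H_m^n$ rather than all of $(\R^n)^m$. The annihilator of $H_m^n$ in $(\R^n)^m$ consists of the tuples $(c,c,\dots,c)$ with $c\in\R^n$. So the orthogonality condition gives only $F_1=\dots=F_m=c$ for some common vector $c$, not $F_i=0$. To close this gap, I sum over $i$:
\[
\sum_i F_i=\sum_{i\neq j}\omega_{ij}g_{ij}=0,
\]
because the terms for $(i,j)$ and $(j,i)$ cancel by the symmetry of $\omega$ and the antisymmetry of $g$. Hence $mc=0$, so $c=0$ and every $F_i$ vanishes. Conversely, if $F_i=0$ for all $i$, then $\omega$ clearly annihilates $\operatorname{im}J$.

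Combining these steps, the image of $J$ has a nonzero annihilator, i.e.\ $\rank J<E$, exactly when a nonzero symmetric $\omega$ solves \eqref{eq:stress}. This is the claimed equivalence. The main step to handle carefully is the quotient by translations, which the summation argument resolves. Everything else is linear algebra, with the transpose computed in the standard inner products on $\R^E$ and $(\R^n)^m$.
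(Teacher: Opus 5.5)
Your proof is correct and is essentially the paper's argument: the paper also identifies linear dependencies among the rows of the full Jacobian matrix $R_Q(\De)$ with solutions of the stress equations \eqref{eq:stress}. It handles the restriction to $H_m^n$ by noting that simultaneous translations lie in the kernel of the full Jacobian, so restricting to $H_m^n$ does not change the rank; this is the dual form of your observation that $\sum_i F_i=0$ forces the common value $c$ to vanish.
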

	
	\begin{proof}
		Let $R_Q(\De)$ be the $E\times mn$ matrix whose $ij$-row has $g_{ij}$ in
		the $i$-block, $-g_{ij}$ in the $j$-block, and zero elsewhere.  A linear
		dependence among its rows is exactly \eqref{eq:stress}.  Simultaneous
		translations lie in the kernel of $D\Phi$, and every variation is the sum of a centered variation and a translation.  Restriction to $H_m^n$ therefore does not change the rank.  Hence
		\[
		\rank Jac\,\Phi(\De)=E
		\quad\Longleftrightarrow\quad
		\det\bigl(R_Q(\De)R_Q(\De)^T\bigr)\ne0
		\quad\Longleftrightarrow\quad
		\eqref{eq:stress}\text{ has only the zero solution}.
		\]
	\end{proof}
	
	\begin{prop}[Sufficient conditions]\label{prop:regular-cases}
		The following statements hold.
		\begin{enumerate}
			\item If $Q$ is a non-singular quadratic form, affine independence of the
			vertices implies $Q$-regularity.
			\item If $m=3$ and the unit sphere of $\rho=Q^{1/r}$ is smooth and
			strictly convex, every (n0n-degenerate) triangle is $Q$-regular.
			\item Regularity follows if the vertices can be ordered
			$i_1,\ldots,i_m$ so that, for every $s$, the vectors
			\begin{equation*}\label{eq:sequential-conormals}
				\{\nabla Q(a_{i_s}-a_{i_t}):t>s\}
			\end{equation*}
			are linearly independent.
			
		\end{enumerate}
	\end{prop}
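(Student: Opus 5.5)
All three parts reduce, via \cref{prop:Q-test}, to showing that the stress equations \eqref{eq:stress} admit only the trivial solution $\omega=0$. I would treat them in the order (3), (1), (2), since (3) is a peeling argument that the other two partly reuse.

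For (3) I would peel off vertices in the given order. Take $s=1$. The equation \eqref{eq:stress} at $i=i_1$ reads $\sum_{t>1}\omega_{i_1 i_t}\nabla Q(a_{i_1}-a_{i_t})=0$. The assumed linear independence then forces $\omega_{i_1i_t}=0$ for all $t>1$. Once all weights on edges meeting $i_1,\dots,i_{s-1}$ vanish, the equation at $i_s$ involves only the weights $\omega_{i_si_t}$ with $t>s$, since $g_{ji}=-g_{ij}$ only changes signs. Independence again kills these weights, and induction on $s$ gives $\omega=0$.

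For (1), write $Q(x)=x^TMx$ with $M$ invertible, so that $g_{ij}=2M(a_i-a_j)$. The stress equation becomes $M\sum_j\omega_{ij}(a_i-a_j)=0$, which is equivalent to $\sum_j\omega_{ij}(a_i-a_j)=0$. Now I use affine independence. Translate so that $a_m=0$; then $a_1,\dots,a_{m-1}$ are linearly independent and every $a_i-a_j$ is a known combination of them. A cleaner route is to apply (3) directly with the order $1,2,\dots,m$. For fixed $s$, the vectors $\{a_s-a_t: t>s\}$ are linearly independent, because affine independence of $\{a_s\}\cup\{a_t:t>s\}$ means exactly that the differences from $a_s$ are independent. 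Since $M$ is invertible, the vectors $\{M(a_s-a_t)\}$ are independent as well, so (3) applies and gives (1).

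For (2), take $m=3$ and apply (3) with the order $1,2,3$. We need two things. First, $\nabla Q(a_1-a_2)$ and $\nabla Q(a_1-a_3)$ must be linearly independent. Second, $\nabla Q(a_2-a_3)$ must be nonzero. The second holds by positive-definiteness, since Euler's identity gives $x\cdot\nabla Q(x)=rQ(x)>0$ for $x\neq 0$. For the first, set $x=a_1-a_2$ and $y=a_1-a_3$; these are linearly independent by non-degeneracy. Suppose $\nabla Q(x)=c\,\nabla Q(y)$. Then $c>0$: pairing both sides with $x$ and using convexity gives $c\,\nabla Q(y)\cdot x>0$, while $\nabla Q(y)\cdot y>0$. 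So the normals of the level sets at $x/\rho(x)$ and at $y/\rho(y)$ point in the same direction. Note that $\nabla Q(u)$ is parallel to the outward normal of the unit sphere at $u$. Smoothness gives a unique supporting hyperplane at each point, and strict convexity makes the Gauss map injective. Hence $x/\rho(x)=y/\rho(y)$, which contradicts the linear independence of $x$ and $y$.

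The main obstacle is this last step: rigorously identifying parallel gradients with a shared supporting hyperplane, and using strict convexity to conclude that the two unit vectors coincide. The cleanest way is to note that both points maximize the linear functional $\langle\nabla Q(x),\cdot\rangle$ on the unit ball, and that a strictly convex body has a unique maximizer for any nonzero linear functional. I must also rule out $c<0$, which the Euler-identity sign argument above handles.
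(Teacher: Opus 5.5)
Your strategy is the paper's. The peeling argument for (3) is exactly what the paper does. Your reductions of (1) and (2) to (3) make explicit what the paper compresses into ``the usual simplex rigidity argument'' and ``the two conormals at a vertex of a non-collinear triangle are independent.'' Parts (1) and (3) are fine as written.

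The one step that fails is the sign claim in (2). From $\nabla Q(x)=c\,\nabla Q(y)$, pairing with $x$ gives $rQ(x)=c\,\nabla Q(y)\cdot x$. This uses Euler's identity and positivity, not convexity. It only says that $c$ and $\nabla Q(y)\cdot x$ have the same sign. Combined with $\nabla Q(y)\cdot y>0$, it still says nothing about the sign of $c$. For $x=-y$ one gets $c=-1$ while both of your inequalities hold. So no argument that avoids the independence of $x,y$ can yield $c>0$, and your closing remark that this argument rules out $c<0$ is not correct.

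The repair is immediate. Since $r$ is even, $\nabla Q$ is odd and $\rho$ is even. If $c<0$, then $\nabla Q(x)=|c|\,\nabla Q(-y)$. Your unique-maximizer argument applied to $x$ and $-y$ gives $x/\rho(x)=-y/\rho(y)$. This again contradicts linear independence, since $x$ and $-y$ are still independent. Handling both signs at once is exactly what the paper's phrase ``the projective Gauss map is injective'' encodes. With this change, your argument for (2) is complete.
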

	
	\begin{proof}
		For a quadratic form, $\nabla Q$ is an invertible linear map, and the usual simplex rigidity argument applies.  For a smooth strictly convex norm, the projective Gauss map is injective.  Thus the two conormals at a vertex of a non-collinear triangle are independent.
		
		Under \eqref{eq:sequential-conormals}, apply the equilibrium equation at
		$i_1$ to eliminate every edge from $i_1$ to a later vertex, and repeat at
		$i_2,i_3,\ldots$.  This eliminates every edge.  
		
	\end{proof}
	
	However, affine independence alone is not sufficient for higher simplices dimensional simplices, even for diagonal forms satisfying all the hypotheses on $Q$.
	
	\begin{prop}[A non-singular diagonal counterexample]\label{prop:counterexample}
		Let $r\geq4$ be even, put $p=r-1$ and $c=2^{-1/p}$, and take
		\[
		Q(x)=\sum_{\nu=1}^n b_\nu x_\nu^r,\qquad b_\nu>0.
		\]
		In the first three coordinates set
		\[
		a_0=0,\quad a_1=(1,-c,-c),\quad
		a_2=(-c,1,-c),\quad a_3=(-c,-c,1).
		\]
		These four vertices are affinely independent but not $Q$-regular.
		Translating all four vertices by their centroid gives an example in
		$H_4^n$.
	\end{prop}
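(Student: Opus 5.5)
The plan is to use the criterion of \cref{prop:Q-test}: it suffices to exhibit a nonzero symmetric family of edge weights $\omega_{ij}$ on the complete graph on $\{0,1,2,3\}$ satisfying the equilibrium equations \eqref{eq:stress}. Since $g_{ij}=\nabla Q(a_i-a_j)$ depends only on differences of vertices, the construction is unaffected by translating all four vertices by their centroid. This translation gives the claimed example in $H_4^n$, and affine independence is likewise translation invariant. All vertices lie in the span of the first three coordinates. The gradient of $Q$ at such a point has components $r\,b_\nu x_\nu^{p}$, which vanish for $\nu>3$, so everything reduces to a computation in $\R^3$.

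\emph{Step 1: computing the conormals.} Put $B=r\,\mathrm{diag}(b_1,b_2,b_3)$. Since $p$ is odd, we have $(-c)^p=-c^p=-\tfrac12$, and therefore
\[
g_{10}=\nabla Q(a_1)=B u_1,\quad g_{20}=Bu_2,\quad g_{30}=Bu_3,
\]
where
\[
u_1=(1,-\tfrac12,-\tfrac12),\quad u_2=(-\tfrac12,1,-\tfrac12),\quad u_3=(-\tfrac12,-\tfrac12,1).
\]
The key point, and the reason for the choice $c=2^{-1/p}$, is that $u_1+u_2+u_3=0$. For the edges among $a_1,a_2,a_3$ we have $a_1-a_2=(1+c)(1,-1,0)$. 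Hence
\[
g_{12}=(1+c)^p B(1,-1,0)=\kappa\,B(u_1-u_2),\qquad \kappa:=\tfrac23(1+c)^p>0,
\]
and by symmetry $g_{ij}=\kappa B(u_i-u_j)$ for all $1\le i\neq j\le 3$.

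\emph{Step 2: the self-stress.} Set $\omega_{0i}=1$ for $i=1,2,3$, and $\omega_{ij}=-1/(3\kappa)$ for $1\le i<j\le3$. At vertex $0$ the equilibrium sum is
\[
\sum_i\omega_{0i}g_{0i}=-B(u_1+u_2+u_3)=0 .
\]
At vertex $1$ the sum equals
\[
B\Bigl(u_1-\tfrac13\bigl[(u_1-u_2)+(u_1-u_3)\bigr]\Bigr)=B\bigl(u_1-\tfrac13\cdot 3u_1\bigr)=0,
\]
where we used $u_2+u_3=-u_1$. Vertices $2$ and $3$ are handled identically, by the cyclic symmetry of the coordinates. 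Thus \eqref{eq:stress} has a nonzero solution, and $\De$ is not $Q$-regular by \cref{prop:Q-test}.

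\emph{Step 3: affine independence.} Since $a_0=0$, it suffices to show that $a_1,a_2,a_3$ are linearly independent. These vectors form the matrix $(1+c)I-c\,J$, where $J$ is the all-ones $3\times 3$ matrix. Its eigenvalues are $1+c$ (twice) and $1-2c$. Because $p\ge3$, we have $c=2^{-1/p}\neq\tfrac12$, so $1-2c\neq0$, and none of the eigenvalues vanish.

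The only step needing care is Step 1, namely keeping track of signs through the odd power $p$ and confirming that the off-diagonal edge conormals are exactly parallel to $B(u_i-u_j)$. Once that is checked, the stress in Step 2 is forced by symmetry.
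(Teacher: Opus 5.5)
Your proof is correct and follows the paper's argument: you use the same stress, since your weight $-1/(3\kappa)$ equals the paper's $-1/(2(1+c)^p)$, verify it through \cref{prop:Q-test}, and use the same eigenvalue computation $1+c,1+c,1-2c$ for affine independence. The only thing the paper adds is the one-line check that $Q$ is non-singular, because $\nabla Q(x)=r(b_\nu x_\nu^{p})_\nu$ vanishes only at the origin; this is what justifies the word ``non-singular'' in the title.
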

	
	\begin{proof}
		The matrix with columns $a_1,a_2,a_3$ has eigenvalues
		$1+c,1+c,1-2c$, all nonzero.  Define
		\[
		\omega_{0i}=1\quad(1\leq i\leq3),\qquad
		\omega_{ij}=-\frac{1}{2(1+c)^p}\quad(1\leq i<j\leq3).
		\]
		Since $c^p=1/2$, the equilibrium equation at $a_0$ holds coordinatewise.
		At $a_1$, after suppressing the invertible factors $rb_\nu$, it is
		\[
		(1,-\tfrac12,-\tfrac12)
		-\frac12\bigl((1,-1,0)+(1,0,-1)\bigr)=0.
		\]
		At the other two vertices the same coordinate-wise identity holds after
		factoring out the corresponding nonzero coefficients $rb_\nu$.  
		Notice that $Q$ is non-singular, since
		$\nabla Q(x)=r(b_\nu x_\nu^p)_\nu$ vanishes only at the origin. 
	\end{proof}
	
	\section{Leray measure and the basic decomposition}
	\label{sec:decomposition}
	
	We will only use regularity near the fixed simplex $\De_o$ as the configuration space $S_{\Phi,v}:=\{v\in H_m^n:\ \Phi(v)=\la^r\ell\}$ may have singuarl point. Let $\chi_\De,\chi_0\in C_c^\infty(H_m^n)$ be smooth cut-off functions such that
	\begin{itemize}
		\item $\chi_{\De_o}$ is supported in a sufficiently small regular
		neighborhood of $\De_o$ and is positive at $\De_o$;
		\item $\chi_0$ is supported near the origin and is positive on a smaller
		neighborhood of the origin;
		\item $\operatorname{supp}\chi_0\cap\Phi^{-1}(\ell)=\emptyset$.
	\end{itemize}
	Set $\chi=\chi_\De+\chi_0$.  The normalized localized Leray measure maybe written formally as, see Section 7. 
	\begin{align}\label{eq:Leray}
		d\sigma_\la(v):&=
		\la^{rE-D}\chi(v/\la)
		\prod_{i<j}\delta\bigl(Q(v_i-v_j)-\la^r\ell_{ij}\bigr)\,dv_H\\
		&= \la^{rE-D}\chi(v/\la)\int_{\R^E} e\big(\sum_{i<j}\al_{ij}(Q(v_i-v_j)-\la^r\ell_{ij})\big)\,\prod_{i<j}d\al_{ij}
	\end{align}
	It is positive, nonzero, and supported near
	$\la\De_o$, while the $\chi_0$-part vanishes on the level set $S_{\Phi,v}$. Its normalization follows from the scaling properties,
	\begin{equation}\label{eq:Leray-scaling}
		dv_H(\la u)=\la^Ddu_H,\qquad
		\delta^{(E)}\bigl(\la^r(\Phi(u)-\ell)\bigr)
		=\la^{-rE}\delta^{(E)}(\Phi(u)-\ell).
	\end{equation}
	
	Choose an even function $\psi\in C_c^\infty(\R^E)$, $\psi\geq0$, with
	$\int\psi=1$, positive near both $0$ and $-\ell$.  Define
	\begin{align}
		\omega_\la(v)
		&:=\la^{-D}\chi(v/\la)
		\psi\bigl(\Phi(v)/\la^r-\ell\bigr),
		\label{eq:coarse-kernel}\\
		\omega_\la^\eps(v)
		&:=\la^{-D}\eps^{-E}\chi(v/\la)
		\psi\left(\frac{\Phi(v)/\la^r-\ell}{\eps}\right).
		\label{eq:fine-kernel}
	\end{align}
	The kernel $\omega_\la^\eps$ converges to $\sigma_\la$ in local coordinates as
	$\eps\to 0$.  Let
	\begin{equation}\label{eq:c-eps}
		c_\eps:=
		\frac{\int_{H_m^n}\omega_\la^\eps(v)\,dv_H}
		{\int_{H_m^n}\omega_\la(v)\,dv_H}.
	\end{equation}
	Scaling shows that $c_\eps$ is independent of $\la$, and it is easy to see  in local coordinates local coordinates  that it stays between two positive constants for small $\eps$.  Let
	\begin{equation}\label{eq:kernel-decomp}
		k_\la^\eps:=\omega_\la^\eps-c_\eps\omega_\la,\qquad
		e_\la^\eps:=\sigma_\la-\omega_\la^\eps.
	\end{equation}
	Then the correctly signed decomposition is
	\begin{equation}\label{eq:correct-decomp}
		\sigma_\la=c_\eps\omega_\la+k_\la^\eps+e_\la^\eps,
		\qquad \int k_\la^\eps=0.
	\end{equation}
	
	For $0\leq f\leq1$, supported in a cube of side $N$, define
	\begin{equation}\label{eq:counting-form}
		\cN_\la(f):=\int_{\R^n}\int_{H_m^n}
		f(z)\prod_{i=1}^m f(z+v_i)\,d\sigma_\la(v)\,dz.
	\end{equation}
	Thus
	\begin{equation}\label{eq:forms-decomp}
		\cN_\la(f)
		=c_\eps\cM_\la(f)+\cK_\la^\eps(f)+\cE_\la^\eps(f).
	\end{equation}
	
	\section{Proof of Theorem \ref{thm:euclidean}}\label{sec:Euclidean-proof} In this section we carry out the three main elements of the proof, a lower bound for the main term, en error estimate, and estimate for the total contribution of the intermediate terms for the decomposition \eqref{eq:forms-decomp}. We follow \cite{CMP17}, the new feature being an ``edge-separation" lemma which makes it possible to reduce the error estimate which to that of estimating $U^3$ uniformity norm of the phase function $e(tQ)$.
	
	\subsection{Lower bound for the main term.}
	
	\begin{lem}\label{lem:Varnavides}
		For every $m\geq2 $ and $\delta>0$, there is a constant $c=c_{m,n}(\de)>0$ with the following
		property.  If $A\subset[0,N]^n$ has measure at least $\delta N^n$ and
		$1\leq\la \leq \frac{c}{10n} N$, then
		\begin{equation}\label{eq:linear-count}
			\int_z\int_{\substack{v\in H_m^n\\|v_i|\leq c\la}}
			\1_A(z)\prod_{i=1}^m\1_A(z+v_i)\,dv_H\,dz
			\geq c\,N^n\la^D.
		\end{equation}
		Moreover, one may take $c=c_{m,n}(\de)=\exp\,(-C_{m,n} \log^7(2/\de))$ for $m\geq 3$ and\\
		$c_{m,n}(\de)=\exp\,(-C_{m,n} \log^9(2/\de))$ for $m=2$, with some constant $C_{m,n}>0.$
	\end{lem}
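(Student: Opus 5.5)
The plan is to reduce \eqref{eq:linear-count} to a counting statement for a finite linear configuration in a dense set, and then average. The pattern $z,z+v_1,\ldots,z+v_m$ with $\sum_i v_i=0$ is exactly a homothetic copy of a fixed pattern. Choose $m-1$ free vectors $v_1,\ldots,v_{m-1}$, which gives $v_m=-\sum_{i<m}v_i$. Then the configuration is parametrised by $(z,v_1,\ldots,v_{m-1})\in\R^n\times(\R^n)^{m-1}=\R^{mn}$. Equivalently, it is the set of points
\[
(z,\,z+v_1,\ldots,z+v_m)
\]
in the product space $(\R^n)^{m}$. These form a configuration of the type handled by the multidimensional Szemer\'edi theorem in the ambient space $\R^{D}$.

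Concretely, I would lift $A$ to $\tilde A:=\{(x_1,\ldots,x_{m-1})\in(\R^n)^{m-1}\}$ in a way that turns the pattern into a corner-type configuration, so that the multidimensional Szemer\'edi theorem of Furstenberg–Katznelson, in its quantitative form, applies. In the simpler route, I first discretize. I tile $[0,N]^n$ by cubes of side $\eta\la$, with $\eta$ small depending on $\de$. I also replace the measure by a set $A'$ of lattice points in $(\eta\la)\Z^n\cap[0,N]^n$ of relative density $\gtrsim\de$; this uses pigeonholing over shifts, since $\int_{u\in[0,\eta\la]^n}|A\cap(u+\eta\la\Z^n)|\,du=|A|$. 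Counting in the discrete setting then gives patterns $z,z+v_1,\ldots$ with $v_i\in\eta\la\Z^n$, $|v_i|\le c\la$. Each discrete pattern thickens to a set of continuous patterns of measure $\gtrsim(\eta\la)^{n+D}$, provided we average over a small perturbation of the $v_i$ within $H_m^n$. To see this, fix the discrete $z$ and let the perturbation range over $H_m^n$. The thickening requires the density to be retained in the perturbed points, so I would do the shift-averaging jointly in all $m+1$ points: apply the discrete count to $A_u:=\{x: x\eta\la+u\in A\}$ and integrate in $u$.

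For the discrete count I would use a Varnavides-type averaging. The needed input is that any $B\subset[M]^n$ with $|B|\ge\de M^n$ contains $\ge c(\de)M^{n}K^{D}$ configurations $\{z,z+t\,y_1,\ldots,z+t\,y_m\}$, for a fixed finite set $y_i$ with $\sum y_i=0$ spanning the pattern. Here $t$ ranges up to $K$, with $K\sim c\la/(\eta\la)$. One subtlety is that we need all $v\in H_m^n$ in a box, not a single homothety class. I would handle this by integrating over the shape $y\in H_m^n$ with $|y_i|\le1$: for each fixed affinely generic shape the multidimensional Szemer\'edi theorem (via Varnavides averaging over subcubes of side $\sim K$) gives a count $\ge c(\de)$ times the trivial count. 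Integrating over shapes $y$ and scales $t\in[c\la/2,c\la]$ then recovers the full $D$-dimensional measure, because the map $(t,y)\mapsto ty$ covers a positive proportion of $\{v\in H_m^n:|v_i|\le c\la\}$ with Jacobian $\sim\la^{D}$. Alternatively, the multidimensional theorem can be applied directly to the pattern $\{z,z+v_1,\ldots\}$ viewed as a corner in $(\R^n)^{m}$, which avoids shapes altogether.

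The main obstacle is the quantitative bound $c=\exp(-C\log^7(2/\de))$ (respectively $\log^9$ for $m=2$). This cannot come from general multidimensional Szemer\'edi. The key observation I would exploit is that the pattern $z,z+v_1,\ldots,z+v_m$ with $\sum v_i=0$ is a system of complexity one in the sense of Green–Tao: eliminating $z$, one point is an average of the others. I would check the Cauchy–Schwarz complexity for $m\ge3$. For $m=2$ the pattern is a 3-AP $z-v,z,z+v$, and I would quote the Kelley–Meka-type bound ($\log^9$ after Bloom–Sisask). For $m\ge3$ I would quote the corresponding bound for linear patterns of complexity one, which likewise yields a $\log^7$ bound via Fourier-analytic density increment. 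I expect the main difficulty to lie in matching these published exponents, and in verifying that the $\ell^\infty$ box constraint $|v_i|\le c\la$ loses only polynomial factors in $\de$. That constraint would be handled by restricting to a cube of side $\sim\la$ and averaging over the $\sim(N/\la)^n$ such cubes.
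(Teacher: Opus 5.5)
Your overall plan matches the paper's. You pass to boxes of side comparable to $\la$ on which $A$ is relatively dense, discretize, and count solutions of the single invariant equation $x_1+\cdots+x_m=mz$ (with $x_i=z+v_i$) by Varnavides averaging. For $m=2$ you use the Kelley--Meka/Bloom--Sisask bound, and for $m\geq3$ bounds for invariant equations in at least four variables. Then you remove the discretization and average over boxes. The multidimensional Szemer\'edi and fixed-shape discussion is a detour. For $m\geq3$, homothetic copies of a fixed shape are genuinely multidimensional patterns, and nothing like $\exp(-C\log^7(2/\de))$ is known for them. Also, the $\log^7$ exponent comes from the structure of a single invariant equation in at least four variables, not from complexity one as such. (For a fixed shape there are only $\asymp M^nK$ copies, not $M^nK^D$.)

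The genuine gap is the step that removes the discretization, in the route you need for the quantitative bound. Integrating the discrete count for $A_u=\{x:\eta\la x+u\in A\}$ over the common offset $u$ translates all $m+1$ points together. The configurations you obtain have $v_i\in\eta\la\Z^n$, a null set in $H_m^n$, so you get a Riemann sum in $v$, not the $(n+D)$-dimensional integral in \eqref{eq:linear-count}. Your claim that each discrete pattern thickens to a set of measure $\gtrsim(\eta\la)^{n+D}$ is unjustified: at a mesh tied to $\la$, knowing that particular points lie in a measurable set $A$ tells you nothing about their neighbours. Perturbing the vertices independently does not help either. The count then becomes off-diagonal, with a different set in each slot, and Kelley--Meka-type bounds do not control such counts.

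What is missing is the paper's thickening inside Lebesgue-density cells. Choose a mesh $\tau\ll\la$, allowed to depend on $A$, so small that the following holds. Let $G$ be the set of cells $k$ for which $A$ fills a proportion $1-\eps_{m,n}$ of the cube of side $(2m+1)\tau$ around $\tau k$; by the Lebesgue density theorem, $G$ has density at least $\de/2$ for $\tau$ small. Apply the discrete count to $G$. For a discrete solution $(k_0,\ldots,k_m)$, let $z,z+v_1,\ldots,z+v_{m-1}$ range over the cells $k_0,\ldots,k_{m-1}$. Then $z+v_m=mz-\sum_{i<m}(z+v_i)$ has density at most $(m\tau)^{-n}$ on the cube around $\tau k_m$. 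A union bound therefore shows that at least half of this product cell, which has measure $\tau^{mn}=\tau^{n+D}$, consists of configurations lying in $A$. Summing over discrete solutions gives $\gtrsim c(\de)N^n\la^D$, with no dependence on $\tau$.

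Alternatively, when $n\geq m-1$, you can average the diagonal count for $\{a:u+Ta\in A\}$ over $u$ and over $T$ in an open set of linear frames and mesh sizes. This makes $(u,T)\mapsto(z,v)$ a submersion at well-conditioned discrete solutions, which is the paper's ``grid frames and mesh sizes''.
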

	
	\begin{proof} Partition the large cube into
		boxes of side comparable with $\la$, on at least $\de/2$ proportion of the boxes the relative density of the set $A$ is at least $\de/2$, let's call these boxes \emph{dense}. Discretize each dense box by a fine grid, apply the lower bound for the number of solutions to the linear, translation invariant equation
		\begin{equation}\label{eq:base-linear}
			x_1+\ldots+x_m-mz=0
		\end{equation}
		and average over translations, grid frames, and mesh sizes.  By the usual Varnavides averaging one has that a $c(\de)>0$ proportion of all solutions $(x_1,\ldots,x_m,z)$ to equation \eqref{eq:base-linear} are in $A^{m+1}$. Thickening the selected grid points inside Lebesgue-density cells and then averaging over the mesh offset removes the discretization, see \cite{CMP17}. Averaging over the dense boxes using the best current bounds \cite{Raghavan26,Kos25} $c_{m,n}(\de) = \exp\,(-C_{m,n} \log^{7} (2/\de))$ shows \eqref{eq:linear-count}.
	\end{proof}
	
	Since $\psi$ is positive near $-\ell$, the kernel $\omega_\la(v)$ is
	bounded below by a constant times $\la^{-D}$ when all $v_i$ are
	sufficiently small compared with $\la$.  Therefore
	\begin{equation}\label{eq:main-lower}
		\cM_\la(\1_A)\geq c(\delta,m,Q,\De)N^n
		\qquad(1\ll\la\ll_\delta N).
	\end{equation}
	
	\subsection{Pair isolation and the edge transform} In order to apply the Cauchy-Schwarz inequality to estimate the error term via the $U^3$-uniformity norm one needs to find an edge $(v_a-v_b)/2=s$ such that in the $s$-variable the phase 
	\begin{equation}\label{eq:Pt}
		P_t(v):=\sum_{i<j}t_{ij}Q(v_i-v_j).
	\end{equation}
	is non-vanishing for $t=(t_{ij})_{i<j}\in\R^E\backslash \{0\}$. 
	
	Let $1\leq a<b\leq m$. If all $v_j$, $j\neq a,b$, are fixed, the condition
	$\sum_i v_i=0$ fixes $v_a+v_b$.  Write
	\begin{equation}\label{eq:pair-coordinates}
		w:=-\sum_{j\ne a,b}v_j,\qquad
		v_a=\frac w2+s,\qquad v_b=\frac w2-s.
	\end{equation}
	Extend the variable $t=(t_{ij})_{i<j}$ symmetrically to $t=(t_{ij})_{i\ne j}$ via $t_{ji}:=t_{ij}$ if $i<j$. As a polynomial in $s$, the degree-$r$ part of $P_t$ is:
	\begin{equation}\label{eq:Cab-top}
		C_{ab}(t)Q(s),\qquad
		C_{ab}(t):=2^rt_{ab}
		+\sum_{j\ne a,b}(t_{aj}+t_{bj}).
	\end{equation}
	
	\begin{lem}\label{lem:pair-transform}
		We have,
		\begin{equation}\label{eq:Cab-lower}
			\max_{a<b}|C_{ab}(t)|\geq c_{m,r}|t|.
		\end{equation}
	\end{lem}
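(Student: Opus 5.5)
The plan is to exploit that $t\mapsto (C_{ab}(t))_{a<b}$ is a linear map $\R^E\to\R^E$. By \eqref{eq:Cab-top}, $C_{ab}(t)=2^r t_{ab}+\sum_{j\ne a,b}(t_{aj}+t_{bj})$. The sum runs over exactly those edges $\{a,j\}$, $\{b,j\}$ of $K_m$ that share one endpoint with $\{a,b\}$. So, indexing $\R^E$ by the edges of the complete graph $K_m$, we can write
\[
(C_{ab}(t))_{a<b}=(2^r I+A_L)\,t,
\]
where $A_L$ is the adjacency matrix of the line graph $L(K_m)$ (the triangular graph). Once this matrix is shown to be invertible, with an explicit lower bound on its smallest singular value, \eqref{eq:Cab-lower} follows from the comparison of the $\ell^\infty$ and $\ell^2$ norms on $\R^E$.

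The key step is the spectrum of $A_L$. I would use the standard identity $A_L=B^TB-2I$, where $B$ is the $m\times E$ vertex--edge incidence matrix of $K_m$. The nonzero eigenvalues of $B^TB$ coincide with those of $BB^T=(m-2)I+J$, with $J$ the all-ones matrix. These are $2m-2$ (once) and $m-2$ (with multiplicity $m-1$); the remaining eigenvalues of $B^TB$ are $0$. Hence the eigenvalues of $A_L$ lie in $\{2m-4,\ m-4,\ -2\}$, and those of the symmetric matrix $M:=2^rI+A_L$ lie in $\{2^r+2m-4,\ 2^r+m-4,\ 2^r-2\}$. Since $r\geq 4$ and $m\geq 2$, all of these are at least $2^r-2\geq 14>0$. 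For $m=2,3$ one checks directly that $2^r+m-4>0$ as well; for $m=2$ the matrix is just the scalar $2^r$.

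Consequently $M$ is symmetric positive definite, so $\abs{Mt}_2\geq (2^r-2)\abs{t}_2$. Combining this with $\max_{a<b}\abs{(Mt)_{ab}}\geq E^{-1/2}\abs{Mt}_2$ gives \eqref{eq:Cab-lower} with $c_{m,r}=(2^r-2)E^{-1/2}$, for whichever norm $\abs{t}$ denotes (any other norm changes the constant only by a factor depending on $E$).

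The only real obstacle is the spectral computation, specifically ensuring that the negative eigenvalue $-2$ of the line graph cannot cancel $2^r$. If one prefers to avoid spectra, there is an alternative argument. Suppose $Mt=0$ and let $t_{ab}$ be an entry of maximal absolute value. Then $2^r\abs{t_{ab}}\leq 2(m-2)\abs{t_{ab}}$, which is only useful when $m$ is small. For large $m$ one instead sums $C_{ab}$ over all edges at a fixed vertex to control the vertex sums. The spectral route handles all $m$ uniformly, so I would present that.
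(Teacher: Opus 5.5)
Your proposal is correct and follows the paper's argument. Both identify $t\mapsto (C_{ab}(t))$ as a symmetric linear map with eigenvalues $2^r-2$, $2^r+m-4$, $2^r+2m-4$, deduce $\|Ct\|_2\geq (2^r-2)\|t\|_2$, and then bound the maximum by the $\ell^\infty$--$\ell^2$ comparison. The only difference is how the spectrum is found: you use the line-graph identities $A_L=B^TB-2I$ and $BB^T=(m-2)I+J$, whereas the paper sums the eigen-equation $(2^r-2-\gamma)t_{ab}+\tau_a+\tau_b=0$ over vertices; your indexing of $\R^E$ by unordered edges also makes the symmetry of the operator more transparent than the paper's ordered-pair formulation.
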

	
	\begin{proof} If we set $t_{ji}:=t_{ij}$ for $i<j$, then we may write
		\[
		P_t(v):=\frac{1}{2}\sum_{i\neq j}t_{ij}Q(v_i-v_j),
		\]
		Let's define $C_{ba}(t):=C_{ab}(t)$ where $C_{ab}(t)$ is defined by \eqref{eq:Cab-top}. Consider the linear map: $C_r:t\to (C_{ab}(t))_{a\neq b})$ mapping $\R^{m(m-1}$ to itself. We calculate the eigenvalues of this map. Assume $C_r(t)=\ga\, t$ for some eigenevalue $\ga$ and eigenvector $t\neq 0$. Write,
		\[
		\tau_a:=\sum_{b\neq a} t_{ab},\quad \tau:=\sum_a \tau_a
		\]
		Then, from \eqref{eq:Cab-top} we have for all $a\neq b$
		\[
		(2^r-2-\ga)t_{ab} +\tau_a+\tau_b=0.
		\]
		Summing for $b \neq a$ gives
		\[
		(2^r+m-4-\ga)\tau_a+\tau=0,
		\]
		for all $a\in [m]$. Summing for $a$, we get
		\[
		(2^r+2m-4-\ga)\tau=0.
		\]
		Then either $\ga=2^r+2m-4$ or $\tau=0$. If $\tau=0$ the either $\ga=2^r+m-4$ or $\tau_a=0$ for all $a\in [m]$. In the latter case either $\ga=2^r-2$ or $t_{ab}=0$ for all $a,b\in [m]$ and hence $t=0$. Thus the eigen-values are: $2^r-2,\ 2^r+m-4$ and $2^r+2m-4$. The operator $C_r$ is symmetric an its eigenvalues are at least $2^r-2$ thus $\|C_r(t)\|_2 \geq (2^r-2)\|t\|_2$. Hence
		\[
		\max_{a<b}|C_{ab}(t)|\geq c_{m,r}|t|,\quad\text{with}\quad c_{m,r}=\frac{2^r-2}{\sqrt{m(m-1)}}.
		\]
	\end{proof}
	
	If $g$ is compactly supported, and $B\subs \R^n$ is a box of size $P$, the normalized local norm of $g$ in the box $B$, is defined by
	\begin{equation}\label{eq:continuous-Gowers}
		\norm{g}_{U^s(B)}^{2^s}
		:=P^{-n(s+1)}\int_{\R^{n(s+1)}}\prod_{\omega\in\{0,1\}^s}
		\cC^{|\omega|}g(x+\omega\cdot h)\,\1_B(x+\om\cdot h)\,dx\,dh,
	\end{equation} where $\1_B$ is the indicator function of the box $B$. 
	
	\begin{lem}[Pair $U^3$ estimate]\label{lem:pair-U3}
		Let $F_0,F_1,F_2: B\to \C$ bounded in magnitude by 1, where $B$ is box of size $P$. 
		\[
		P^{-2n}\bigg|\int\int F_0(x)F_1(x+s)F_2(x-s)g(s)\,dx\,ds\,\bigg| \ll \|g\|_{U^3([-4P,4P]^n)},
		\]
		with an implicit constant depending only on the dimension $n$. 
	\end{lem}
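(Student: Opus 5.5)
The plan is three applications of the Cauchy--Schwarz inequality, each removing one of the functions $F_0,F_1,F_2$, finishing with the $U^3$ average of $g$. Everything is supported in $B$ or in $[-4P,4P]^n$, so all integrals are over sets of measure $O(P^n)$; this is what makes the normalisation $P^{-2n}$ and the normalisation in \eqref{eq:continuous-Gowers} match.

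\emph{Step 1: remove $F_0$.} Write the form as $\int F_0(x)\,G(x)\,dx$ with
\[
G(x):=\int F_1(x+s)F_2(x-s)g(s)\,ds .
\]
Since $|F_0|\le\1_B$, Cauchy--Schwarz in $x$ gives
\[
|\cdot|^2\le P^n\int|G(x)|^2\,dx
= P^n\int\!\!\int\!\!\int F_1(x+s)\overline{F_1(x+s')}\,F_2(x-s)\overline{F_2(x-s')}\,g(s)\overline{g(s')}\,dx\,ds\,ds'.
\]
Set $s'=s+h$ and $y=x+s$. The integrand becomes
\[
\Delta_hF_1(y)\;\cdot\;F_2(y-2s)\overline{F_2(y-2s-h)}\;\cdot\;\Delta_h g(s),
\qquad \Delta_h F(u):=F(u)\overline{F(u+h)}.
\]
The variable $h$ ranges over a set of size $O(P^n)$.

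\emph{Step 2: remove $F_1$.} For fixed $h$, the function $\Delta_hF_1(y)$ depends only on $y$. Apply Cauchy--Schwarz in $(y,h)$, bounding $\int\!\!\int|\Delta_hF_1|^2\,dy\,dh\le P^{2n}$. This leaves
\[
\int\!\!\int\Bigl|\int F_2(y-2s)\overline{F_2(y-2s-h)}\,\Delta_hg(s)\,ds\Bigr|^2\,dy\,dh .
\]
Expand the square with $s$ and $s+k$, and substitute $u=y-2s$. The integrand is now a product of four $F_2$ terms at the points $u,\;u-h,\;u-2k,\;u-2k-h$, times $\Delta_k\Delta_h g(s)$.

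\emph{Step 3: remove $F_2$.} For fixed $(h,k)$, the $F_2$-product depends only on $u$, not on $s$. Apply Cauchy--Schwarz once more in $(u,h,k)$, bounding the $F_2$ part trivially by $O(P^{3n})$. Expanding the remaining square in $s$ produces the third difference $\Delta_l\Delta_k\Delta_hg(s)$ integrated over $(s,h,k,l)$, which is exactly the unnormalised $\|g\|_{U^3}^8$. All these points lie in $[-4P,4P]^n$, so we may insert $\1_{[-4P,4P]^n}$ at no cost. The change of variables $k\mapsto 2k$ costs only a Jacobian factor $2^n$.

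\emph{Bookkeeping.} Collecting the powers of $P$ gives
\[
\Bigl(P^{-2n}|\cdot|\Bigr)^8\ll_n P^{-4n}\int g\text{-cube}\ \ll_n\ \|g\|_{U^3([-4P,4P]^n)}^8 .
\]
Taking eighth roots gives the lemma.

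\emph{Main obstacle.} The delicate part is the bookkeeping: the $\1_B$ cutoffs must survive each substitution so that the $h$, $k$ and $l$ ranges really are $O(P)$-boxes. This is what makes the normalising powers $P^{-n(s+1)}$ in \eqref{eq:continuous-Gowers} line up. One must also check that the dilations by $2$ only change constants depending on $n$.
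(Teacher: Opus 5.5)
Your argument is the paper's: three applications of Cauchy--Schwarz with the shifts $y=x+s$ and $u=y-2s$ (the paper's $x+s\to x$ and $x+2s\to x$), giving the same normalization $P^{-4n}$ in front of the $g$-cube. One point needs to be made explicit, and the paper glosses over it in the same way: after your second Cauchy--Schwarz, taken over all $(y,h)$, only the support of $g$ confines $s$ and $l$, so ``all these points lie in $[-4P,4P]^n$'' holds only when $g$ vanishes outside $[-4P,4P]^n$; in general you must first replace $g$ by $g\1_{[-P,P]^n}$, which leaves the left side unchanged because $x,x\pm s\in B$ forces $s\in[-P,P]^n$.
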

	
	\begin{proof}
		Applying Cauchy--Schwarz inequality successively together with shifting the variables $x+s\to x$ after the first, and $x+2s\to x$ after the second application eliminates the factors $F_0,F_1,F_2$. After a third application of Cauchy--Schwarz the eighth power is
		bounded by a constant times the integral of
		\[
		P^{-4n}\int \prod_{\omega\in\{0,1\}^3}
		\cC^{|\omega|}g(s+\omega\cdot h)\,\1_{[-4P,4P]^n} (s+\om\cdot h)\,dh\,ds
		\]
		which is the eighth power localized $U^3([-4P,4P]^n)$-norm of the function $g$.
	\end{proof}
	
	\subsection{Gowers norm estimates for polynomial phase functions.} We will need to estimates the Gowers norms of certain phase functions. Such estimates are not directly available in the Euclidean settings for general polynomial phases $Q$, thus we first estimate the more standard discrete Gowers norms of phase functions $e(\theta Q(x))$.  
	Fortunately such estimates are known for phase functions $g(x)=e(\al Q(x)+R(x))$ where $deg\,R<r$ and $\al$ belongs to the so-called minor arcs, in fact if the form $Q$ has sufficiently large rank these are provide by provided by Birch \cite{Birch62}. To be more precise, let 
	\begin{equation}\label{eq:birch-parameters}
		s:=r-1,\qquad p:=2^{r-1},\qquad
		\kappa:=\frac{n}{(r-1)2^{r-1}}.
	\end{equation}
	If $g:\Z^n\to\mathbb C$ is supported in a box of side $O(P)$, define
	the normalized Gowers local norm by
	\begin{equation}\label{eq:discrete-local-Gowers}
		\norm{g}_{U_P^s}^{p}
		:=P^{-nr}\sum_{x,h_1,\ldots,h_s\in\Z^n}
		\prod_{\omega\in\{0,1\}^s}
		\cC^{\abs{\omega}}g(x+\omega\cdot h),
	\end{equation}
	where $g$ is extended by zero outside the box. 
	
	We recall Birch's minor arcs estimate for a single non-singular form of degree $r$.  For
	$0<\theta<1$, let recall the major/minor arcs at level $\theta$.
	\begin{equation}\label{eq:Birch-major-arcs}
		\mathfrak M_P(\theta)
		:=\bigcup_{1\le q\le P^{(r-1)\theta}}
		\ \bigcup_{\substack{a\in\Z\, (\mathrm{mod}\ q)\\(a,q)=1}}
		\left\{\alpha\in\T:
		2\abs{q\alpha-a}<P^{-r+(r-1)\theta}\right\},
		\qquad
		\mathfrak m_P(\theta):=\T\setminus\mathfrak M_P(\theta).
	\end{equation}
	
	\begin{lem}[Birch's minor-arc estimate for Gowers norms]
		\label{lem:Birch-minor-Gowers}
		Let $Q\in\Z[x_1,\ldots,x_n]$ be a homogeneous non-singular form of degree $r$ and let 
		$W\in C_c^\infty(\R^n)$.  If $R$ is any polynomial of degree
		less than $r$, then, uniformly in all coefficients of $R$, one has
		\begin{equation}\label{eq:minor-Gowers-theta}
			\norm{W(\cdot/P)e\bigl(\alpha Q+R_{<r}\bigr)}_{U_P^{r-1}}
			\ll_{Q,W}
			P^{-n\theta/2^{r-1}}
			\qquad\bigl(\alpha\in\mathfrak m_P(\theta)\bigr).
		\end{equation}
		Consequently, if $P^{-r}\le\abs{\alpha}<P^{-r/2}$, where $\alpha$ is
		represented in $[-1/2,1/2]$, then
		\begin{equation}\label{eq:minor-Gowers-small-alpha}
			\norm{W(\cdot/P)e\bigl(\alpha Q+R_{<r}\bigr)}_{U_P^{r-1}}
			\ll_{\eps,Q,W}
			\bigl(P^r\abs{\alpha}\bigr)^{-\kappa}.
		\end{equation}
	\end{lem}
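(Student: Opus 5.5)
We will follow Birch's Weyl-differencing argument. The point is that the $U^{r-1}_P$ norm in \eqref{eq:discrete-local-Gowers} is exactly the quantity that appears after $r-1$ Weyl differencing steps. We start from the definition. The function $g=W(\cdot/P)e(\alpha Q+R)$ has unimodular phase, so the product over $\omega\in\{0,1\}^{r-1}$ of $\cC^{|\omega|}g(x+\omega\cdot h)$ equals
\[
\prod_{\omega}\cC^{|\omega|}W\bigl((x+\omega\cdot h)/P\bigr)\cdot e\bigl(\De_{h_1}\cdots\De_{h_{r-1}}(\alpha Q+R)(x)\bigr).
\]
Since $\deg R<r$, the $(r-1)$-fold difference of $R$ is independent of $x$ and is constant in $x$ for fixed $h$. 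The $(r-1)$-fold difference of $\alpha Q$ is $\alpha\, r!\,\Psi(h_1,\ldots,h_{r-1},x)+c(h)$, where $\Psi$ is the multilinear form attached to $Q$ and $c(h)$ does not depend on $x$. The terms independent of $x$ therefore have modulus one and drop out once we take absolute values of the inner sum in $x$. This is what makes the bound uniform in the coefficients of $R$.

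For fixed $h$, the inner sum over $x$ is a smoothly weighted linear exponential sum
\[
\sum_x \Bigl(\prod_\omega W_\omega(x/P)\Bigr)\, e\Bigl(\alpha r!\sum_{k} x_k\Psi_k(h)\Bigr),
\]
where $\Psi_k(h)$ is the $k$-th coefficient, multilinear in $h_1,\ldots,h_{r-1}$. We handle it in one of two ways: by partial summation (or Poisson summation for the smooth weight), or more simply by summing the cutoff over boxes. Either way it is $\ll P^n\prod_k\min\bigl(1,(P\|\alpha r!\Psi_k(h)\|)^{-1}\bigr)$, up to negligible tails. Summing over $h$ then reproduces Birch's estimate
\[
\norm{g}_{U^{r-1}_P}^{2^{r-1}}\ll P^{-nr}\cdot P^{n}\sum_{|h_i|\ll P}\prod_k\min\bigl(1,(P\|\alpha r!\Psi_k(h)\|)^{-1}\bigr).
\]
This is the same quantity as in Birch's Lemmas 2.1--2.4, except for the extra smooth weights, which are harmless.

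The main step, and the real obstacle, is Birch's dichotomy lemma. Suppose the $h$-sum exceeds $P^{n(r-1)-n\theta/... }$, more precisely $P^{n(r-1)}P^{-K\theta}$ with $K=n/... $. Then, via the geometry-of-numbers shrinking lemma (Davenport), either $\alpha$ has a rational approximation $q\le P^{(r-1)\theta}$, $|q\alpha-a|\le P^{-r+(r-1)\theta}$, so that $\alpha\in\mathfrak M_P(\theta)$, or the number of $h$ with $\Psi_k(h)=0$ for all $k$ is large. In the second case the count of such $h$ is $\gg P^{n(r-1)-n\theta}$ roughly. This contradicts the bound $\dim\{h:\Psi_k(h)=0\ \forall k\}\le (r-2)n+\dim V^*_Q$, and non-singularity \eqref{eq:Q-nonsingular} gives $\dim V^*_Q=0$. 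Since this case analysis is exactly Birch's, we will cite it rather than redo it. The only point to check is the bookkeeping of exponents: we need the $2^{r-1}$-th power to be $\ll P^{-n\theta}$, and taking the $2^{r-1}$-th root gives \eqref{eq:minor-Gowers-theta}. An arbitrarily small loss of $P^\eps$ may occur and can be absorbed by adjusting $\theta$.

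For \eqref{eq:minor-Gowers-small-alpha}, take $\alpha$ with $P^{-r}\le|\alpha|<P^{-r/2}$ and choose $\theta$ with $P^{-r+(r-1)\theta}\asymp|\alpha|$ up to constants. Then we show that $\alpha\notin\mathfrak M_P(\theta')$ for slightly smaller $\theta'$. The approximation $a=0$, $q=1$ fails by the choice of $\theta$. Any approximation with $a\neq0$ needs $|\alpha|\ge 1/(2q)\ge P^{-(r-1)\theta}/2$, which is incompatible with $|\alpha|<P^{-r/2}$ whenever $(r-1)\theta<r/2$. This holds in the regime in question, since $P^{(r-1)\theta}\asymp P^r|\alpha|\le P^{r/2}$. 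Applying \eqref{eq:minor-Gowers-theta} with $P^{(r-1)\theta}\asymp P^r|\alpha|$ then gives the bound $(P^r|\alpha|)^{-n/((r-1)2^{r-1})}=(P^r|\alpha|)^{-\kappa}$, with a possible $\eps$ loss in the exponent from the endpoint choices.
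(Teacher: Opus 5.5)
Your route is the paper's: expand the $U^{r-1}_P$ norm into Birch's $(r-1)$-fold differenced sum, and note that all $x$-independent phases, in particular everything coming from $R$, disappear once the inner sum is put in absolute value. You then bound the linear sum in $x$ and quote Birch's dichotomy, with the singular alternative excluded by non-singularity (so $K=n/2^{r-1}$). For small $\alpha$ you choose $\theta$ with $P^{(r-1)\theta}\asymp P^r\abs{\alpha}$; your check that such $\alpha$ lies on the minor arcs is more explicit than the paper's.

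The one step that fails is your handling of the $P^\eps$ loss. You bound the $x$-sum by $\min\bigl(1,(P\norm{\alpha r!\Psi_k(h)})^{-1}\bigr)$, which puts you exactly in Birch's setting. There the dyadic counting in his Lemma 2.2 costs $(\log P)^n$, and what comes out is $\norm{g}_{U^{r-1}_P}\ll_\eps P^{-n\theta/2^{r-1}+\eps}$ on $\mathfrak m_P(\theta)$. This loss cannot be ``absorbed by adjusting $\theta$''. Since $\mathfrak M_P(\theta')\subset\mathfrak M_P(\theta)$ for $\theta'<\theta$, lowering $\theta$ only weakens the bound. Raising it is not allowed, because $\alpha\in\mathfrak m_P(\theta)$ may lie in $\mathfrak M_P(\theta'')$ for every $\theta''>\theta$. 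So you prove \eqref{eq:minor-Gowers-theta} only with an extra factor $P^\eps$.

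In \eqref{eq:minor-Gowers-small-alpha} this is not ``an $\eps$ loss in the exponent'' either. It is a factor $P^\eps$, which destroys the bound whenever $P^r\abs{\alpha}$ is smaller than every power of $P$. That is exactly the situation in \cref{lem:pruned-lattice}, where the height $H$ stays bounded while $P\to\infty$.

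The paper avoids this by keeping the smooth weight (see \eqref{eq:Birch-terminal-quantity} and \cref{rem:smooth-pruning}). Poisson summation bounds the $x$-sum by $P^n\prod_k(1+P\norm{\alpha\Psi_k(h)})^{-C}$ with $C>1$, so Birch's dyadic sums converge and no logarithm is lost.

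Once the $\eps$ is gone, your claimed ``contradiction'' with the dimension bound becomes borderline at $K=n/2^{r-1}$. Suppose $\norm{g}_{U^{r-1}_P}^{2^{r-1}}\geq C_0P^{-n\theta}$. Davenport's shrinking then yields only $\gg C_0P^{(r-2)n\theta}$ vectors $h$ with $\abs{h_i}\leq P^\theta$ and $\norm{\alpha\Psi_k(h)}\leq P^{-r+(r-1)\theta}$. Meanwhile, those with $\Psi(h)=0$ number $O(P^{(r-2)n\theta})$ when $\dim V_Q^*=0$. Birch's $\eps$ is also what makes this comparison strict. Without it you must win by taking $C_0$ large. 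The resulting denominator $q=\abs{\Psi_k(h)}\leq C_QP^{(r-1)\theta}$ then places $\alpha$ only in a constant-factor enlargement of $\mathfrak M_P(\theta)$. This is harmless after shifting $\theta$ by $O(1/\log P)$, but it has to be accounted for.
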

	
	\begin{rem}\label{rem:smooth-pruning}
		Since all weights occurring here are smooth and compactly supported, the
		factor \(P^\eps\) that is appearing in in Lemma~4.4 may be omitted. Indeed, after the
		final differencing step, Poisson summation in the remaining variable
		replaces the usual reciprocal sums by a rapidly decreasing periodic
		kernel. The resulting dyadic sums are uniformly summable. Consequently,
		for every \(\kappa_0<\kappa\),
		\[
		\bigl\|
		W(\cdot/\lambda)e(\theta Q+R_{<r})
		\bigr\|_{U^{r-1}_\lambda}
		\ll_{\kappa_0,W}
		H^{-\kappa_0},
		\qquad
		\theta\notin\mathfrak M_1(\lambda;H),
		\]
		uniformly in the coefficients of \(R_{<r}\) and throughout the range of
		\(H\) used below.
	\end{rem}

	\begin{proof}
		Let $\Gamma_Q$ be the symmetric $r$-linear form associated with $Q$ and
		put
		\[
		\Psi_j(\mathbf h)
		:=(-1)^{r-1}r!\,\Gamma_Q(e_j,h_1,\ldots,h_{r-1}),
		\qquad \mathbf h=(h_1,\ldots,h_{r-1}).
		\]
		For
		\[
		g(x):=W(x/P)e\bigl(\alpha Q(x)+R_{<r}(x)\bigr),
		\]
		expansion of the $p$-th power in \eqref{eq:discrete-local-Gowers} gives
		\begin{equation}\label{eq:expanded-Gowers-power}
			\norm{g}_{U_P^{r-1}}^p
			=P^{-nr}\sum_{\mathbf h}\sum_x A_{\mathbf h}(x)
			e\left(\alpha\sum_{j=1}^n x_j\Psi_j(\mathbf h)
			+C_{\alpha,R}(\mathbf h)\right),
		\end{equation}
		where
		\[
		A_{\mathbf h}(x)
		:=\prod_{\omega\in\{0,1\}^{r-1}}
		\cC^{\abs{\omega}}
		W\left(\frac{x+\omega\cdot h}{P}\right).
		\]
		Here $\Delta_hF(x):=F(x)-F(x+h)$, and we used the fact that 
		\[
		\Delta_{h_1}\cdots\Delta_{h_{r-1}}
		\bigl(\alpha Q+R_{<r}\bigr)(x)
		=\alpha\sum_{j=1}^n x_j\Psi_j(\mathbf h)
		+C_{\alpha,R}(\mathbf h).
		\]
		In particular, all lower-degree terms are independent of the base variable
		$x$ and hence disappear after taking the absolute value of the inner sum.
		The support of $A_{\mathbf h}$ forces $\abs{h_i}\ll_W P$. The support of $A_{\mathbf h}$ is forces $|h_i|\ll P$ its rapidly decaying Fourier transform yields,
		\begin{equation}\label{eq:Birch-terminal-quantity}
			\norm{g}_{U_P^{r-1}}^p
			\ll_{W,C} \mathcal B_P(\alpha),
			\qquad
			\mathcal B_P(\alpha)
			:=P^{-nr}\sum_{\abs{\mathbf h}\ll_W P}
			\prod_{j=1}^n
			\big(1+P\norm{\alpha\Psi_j(\mathbf h)}_{\R/\Z}\big)^{-C}.
		\end{equation}
		
		The quantity $\mathcal B_P(\alpha)$ compares to the terminal quantity on the right of Birch's Lemma~2.1, equation~(2), 
		after division by $P^{np}$, with the difference that the quantity in Lemma~2.1 in \cite{Birch62} corresponds to $C=1$. The argument proving Birch's minor-arc estimate (Lemmas~2.2--3.3 and Lemma~4.3 of \cite{Birch62}), starting from
		this terminal quantity, applies without any changes. Indeed, the exponential sum
		enters that argument only through Lemma~2.1, which places its normalized
		$p$-th power below $\mathcal B_P(\alpha)$.  Starting instead from
		$\mathcal B_P(\alpha)^{1/p}$ gives the same alternative: either
		\begin{equation}\label{eq:Birch-terminal-minor-bound}
			\mathcal B_P(\alpha)^{1/p}
			\ll_\eps P^{-K\theta},
		\end{equation}
		or $\alpha\in\mathfrak M_P(\theta)$, or the forms $\Psi_i$ lie on Birch's
		singular locus.  Since $Q$ is non-singular, Birch's parameter $K$, defined
		by
		\[
		\dim V_Q^*=n-2^{r-1}K,
		\]
		equals $n/2^{r-1}$, and the singular alternative is absent.  Combining
		\eqref{eq:Birch-terminal-quantity} and
		\eqref{eq:Birch-terminal-minor-bound} proves
		\eqref{eq:minor-Gowers-theta}.
		
		Finally, Birch's ``small-$\alpha$" corollary following Lemma~4.3 applies
		to $\mathcal B_P(\alpha)^{1/p}$.  Indeed, choose $\theta_0$ such that 
		\[
		P^r\abs{\alpha}=P^{(r-1)\theta_0}
		\]
		and approach $\theta_)$ value of $\theta$ from below.  Since
		$P^{-r}\le\abs{\alpha}<P^{-r/2}$, one has
		$2(r-1)\theta<r$, and the minor arcs estimate \eqref{eq:Birch-terminal-minor-bound} applies. Thus
		\[
		P^{-K\theta}
		=\bigl(P^r\abs{\alpha}\bigr)^{-K/(r-1)}
		=\bigl(P^r\abs{\alpha}\bigr)^{-\kappa},
		\]
		which gives \eqref{eq:minor-Gowers-small-alpha}. 
	\end{proof}
	
	
	\begin{lem}[Transfer to Euclidean Gowers norms]
		\label{lem:Birch-Archimedean}
		Let $r\ge4$ and fix $\varphi\in C_c^\infty(\R^n)$.  Suppose
		\begin{equation}\label{eq:Arch-phase}
			P_t(y)=c_tQ(y)+R_t(y),\qquad \deg R_t<r,
		\end{equation}
		where, for $\abs{t}\ge1$,
		\[
		\abs{c_t}\asymp\abs{t}
		\quad\text{and the coefficients of }R_t\text{ are }O(1+\abs{t}).
		\]
		
		Let $W\in C_c\infty(\R^n)$ satisfying $\|W\|_{C^M}\leq C_W$.
		Then, 
		\begin{equation}\label{eq:Arch-Gowers-decay}
			\norm{W\, e(P_t)}_{U^3(\R^n)}
			\ll_{Q,W}(1+\abs{t})^{-\kappa},
		\end{equation}
		with implicit constant depending only on the support and finitely many derivatives of the weight function $W$.
	\end{lem}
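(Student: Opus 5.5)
We plan to prove Lemma~\ref{lem:Birch-Archimedean} by discretizing the continuous $U^3$ norm and reducing it to Lemma~\ref{lem:Birch-minor-Gowers}, applied at a scale chosen so that $\alpha$ is small but lies in the ``small-$\alpha$'' range.

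\textbf{Step 1: reduce to $U^{r-1}$ and rescale.} Since $r\ge4$, we have $r-1\ge3$. The continuous Gowers norms are monotone in $s$, so
\[
\norm{W e(P_t)}_{U^3}\le\norm{W e(P_t)}_{U^{r-1}},
\]
up to normalization constants depending on the support of $W$. It therefore suffices to bound the $U^{r-1}$ norm.

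Fix a large parameter $P$, to be chosen later, and substitute $y=x/P$ with $x\in\R^n$. This gives
\[
W(y)e(P_t(y))=W(x/P)\,e\bigl(c_tP^{-r}Q(x)+R_t(x/P)\bigr).
\]
Here $R_t(\cdot/P)$ is again a polynomial of degree $<r$. The continuous $U^{r-1}$ norm is dilation-invariant once normalized by $P^{-nr}$.

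\textbf{Step 2: replace the continuous average by a lattice average.} We write $x=u+\xi$ and $h_i=k_i+\eta_i$, where $u,k_i\in\Z^n$ and $\xi,\eta_i\in[0,1)^n$. The continuous average then becomes an average over $(\xi,\eta)\in[0,1)^{nr}$ of discrete local Gowers sums \eqref{eq:discrete-local-Gowers}. Each such sum is applied to the shifted function
\[
x\mapsto W((x+\xi)/P)\,e\bigl(\alpha Q(x+\xi)+R(x+\xi)\bigr),\qquad \alpha=c_tP^{-r}.
\]
The shift by $\xi$ only changes the lower-order polynomial, since $Q(x+\xi)=Q(x)+(\deg<r)$. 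It also replaces $W$ by a translate $W(\cdot+\xi/P)$, whose $C^M$ norms are uniformly bounded.

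The parallelepiped offsets $\eta$ are the subtler point. Instead of treating them in the sum, we apply the Gowers–Cauchy–Schwarz inequality after differencing. Alternatively, we use the fact that in Birch's differencing in the proof of Lemma~\ref{lem:Birch-minor-Gowers} one can difference over real shifts $h_i\in\Z^n+\eta_i$. There the multilinear form $\Psi_j$ only acquires bounded perturbations, so the terminal quantity $\mathcal B_P(\alpha)$ is unchanged up to constants. We rely on the uniformity in the lower-order coefficients granted by Lemma~\ref{lem:Birch-minor-Gowers} and Remark~\ref{rem:smooth-pruning}.

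\textbf{Step 3: choose $P$ and conclude.} We choose $P$ so that $\abs{\alpha}=\abs{c_t}P^{-r}$ lies in $[P^{-r},P^{-r/2})$. Any
\[
P\gg\abs{t}^{2/r}
\]
works, since $\abs{c_t}\asymp\abs{t}\ge1$. Then \eqref{eq:minor-Gowers-small-alpha} gives the bound
\[
(P^r\abs{\alpha})^{-\kappa}=\abs{c_t}^{-\kappa}\asymp\abs{t}^{-\kappa}.
\]
This bound is uniform in $P$, in $\xi,\eta$, and in the coefficients of $R_t$. Averaging over the offsets, and letting $P\to\infty$ if convenient, yields \eqref{eq:Arch-Gowers-decay}. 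For $\abs{t}\le1$ the bound is trivial, because $\norm{We(P_t)}_{U^3}\ll\norm{W}_\infty$.

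\textbf{Main obstacle.} The hard step is making Step~2 rigorous. The continuous norm averages over real differencing parameters, and we need Birch's Weyl-differencing bound to hold uniformly over non-integral offsets. We would try first to absorb the offsets into the smooth weight and into lower-degree terms. If that fails, we would redo the differencing with real shifts, using the rapid Fourier decay of the smooth weight $A_{\mathbf h}$ as in \eqref{eq:Birch-terminal-quantity}.
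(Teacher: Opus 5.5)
Your argument is correct, but it discretizes differently from the paper. Steps 1 and 3 match the paper; the difference is in Step 2.

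\textbf{The paper's discretization.} The paper does not split the integral exactly. It compares the continuous $2^{r-1}$-th power with its Riemann sum on the mesh $P^{-1}\Z^n$. That sum is literally $\norm{W(\cdot/P)e(P_t(\cdot/P))}_{U^{r-1}_P}^{2^{r-1}}$ for a single function. The cost is an error $O(T/P)$, coming from the $O(T)$ Lipschitz bound on the integrand, where $T=2+\abs{t}$. The paper therefore takes $P=\lceil T^A\rceil$ with $A>1+n/(r-1)$. Then the error, after taking the root, is $O(T^{-(A-1)/2^{r-1}})=o(T^{-\kappa})$.

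\textbf{Your discretization.} Your splitting $x=u+\xi$, $h_i=k_i+\eta_i$ is exact, so there is no error term. The obstacle you flag is closed by the Gowers--Cauchy--Schwarz inequality you name. For fixed $(\xi,\eta)$, the lattice sum is the Gowers inner product over $\Z^n$ of the $2^{r-1}$ functions $g_\omega(y):=g(y+\xi+\omega\cdot\eta)$. Each of these has the form $W_\omega(y/P)\,e(\alpha Q(y)+\tilde R_\omega(y))$, where $\deg\tilde R_\omega<r$ and $W_\omega=W(\cdot+(\xi+\omega\cdot\eta)/P)$ is an $O(1/P)$ translate of $W$. Gowers--Cauchy--Schwarz bounds the sum by $P^{nr}\prod_\omega\norm{g_\omega}_{U^{r-1}_P}$. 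Applying \eqref{eq:minor-Gowers-small-alpha} to each factor gives $\ll P^{nr}\abs{c_t}^{-\kappa 2^{r-1}}$. Averaging over the offsets finishes the proof.

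\textbf{Drop the fallback.} Your fallback of redoing Birch's differencing with real shifts is unnecessary. It would also be awkward, since Birch's shrinking argument uses that $\Psi_j(\mathbf h)$ is an integer for integral $\mathbf h$.

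\textbf{Trade-off.} The paper applies the lattice estimate to one fixed weight and needs no Gowers--Cauchy--Schwarz. In exchange, it needs $P$ polynomially large in $T$ to beat the Riemann-sum error. Your version works for any $P$ with $P^{r/2}\gg\abs{t}$. In exchange, it needs the lattice estimate uniformly over the translated weights $W_\omega$. That uniformity is available, because the implicit constant depends only on the support and finitely many derivatives of the weight.
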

	
	\begin{proof}
		The range $\abs{t}\le1$ is trivial.  Put $T:=2+\abs{t}$, choose
		\begin{equation}\label{eq:A-choice}
			A>1+\frac{n}{r-1},
			\qquad P:=\lceil T^A\rceil,
		\end{equation}
		and sample the continuous function on the mesh $P^{-1}\Z^n$.  Since the
		integrand defining the $p$-th power of the $U^{r-1}(\R^n)$-norm has fixed
		compact support and Lipschitz norm $O(T)$, the Riemann-sum comparison gives
		\begin{equation}\label{eq:Riemann-comparison}
			\norm{W e(P_t)}_{U^{r-1}(\R^n)}^p
			=\norm{W\,(\cdot/P)e\bigl(P_t(\cdot/P)\bigr)}_{U_P^{r-1}}^p
			+O(TP^{-1}).
		\end{equation}
		On the lattice,
		\[
		P_t(x/P)=\alpha Q(x)+R_{t,P}(x),
		\qquad \alpha:=\frac{c_t}{P^r},
		\qquad \deg R_{t,P}<r.
		\]
		Moreover,
		\[
		P^r\abs{\alpha}=\abs{c_t}\asymp T,
		\qquad P^{-r}\le\abs{\alpha}\le P^{-r/2}
		\]
		for $T$ sufficiently large.  Hence
		\eqref{eq:minor-Gowers-small-alpha} gives
		\begin{equation}\label{eq:lattice-minor-transfer}
			\norm{W(\cdot/P)e\bigl(P_t(\cdot/P)\bigr)}_{U_P^{r-1}}
			\ll_{Q,W}T^{-\kappa}.
		\end{equation}
		After taking the $p$-th root in \eqref{eq:Riemann-comparison}, the error is
		\[
		O\bigl((TP^{-1})^{1/p}\bigr)
		=O\bigl(T^{-(A-1)/2^{r-1}}\bigr)
		=o(T^{-\kappa})
		\]
		by \eqref{eq:A-choice}.  Combining this with
		\eqref{eq:lattice-minor-transfer} proves the corresponding
		$U^{r-1}(\R^n)$ estimate.  Since $r\ge4$, using monotonicity of the local Gowers norms gives $U^3\lesssim U^{r-1}$. This proves
		\eqref{eq:Arch-Gowers-decay}.
	\end{proof}
	
	\subsection{The high-frequency term} 
	Fourier inversion, followed by the changes of variables
	\(v=\lambda u\) and \(t=\lambda^r\alpha\), gives
	\[
	\mathcal E_\lambda^\varepsilon(f)
	=
	\int_{\mathbb R^E}
	\bigl(1-\widehat\psi(\varepsilon t)\bigr)
	e(-t\cdot\ell)\,
	\mathcal T_{\lambda,t}(f)\,dt,
	\]
	where
	\[
	\mathcal T_{\lambda,t}(f)
	:=
	\int_{\mathbb R^n}\int_{H_m^n}
	\chi(u)f(z)\prod_{i=1}^m f(z+\lambda u_i)\,
	e\,\bigl(P_t(u)\bigr)\,du_H\,dz
	\]
	and
	\[
	P_t(u):=t\cdot\Phi(u)
	=\sum_{i<j}t_{ij}Q(u_i-u_j).
	\]
	Fixing the remaining simplex variables and using \cref{lem:pair-transform}, choose
	$a<b$ with $|C_{ab}(t)|\gtrsim|t|$.  Then
	\cref{lem:pair-U3,lem:Birch-minor-Gowers} give
	\begin{equation}\label{eq:Tt-decay}
		|T_t(f)|\lesssim N^n(1+|t|)^{-\kappa}.
	\end{equation}
	Since $\widehat\psi(0)=1$ and $\psi$ is even,
	\begin{equation}\label{eq:rho-tail}
		|1-\widehat\psi(\eps t)|
		\lesssim\min(1,\eps^2|t|^2).
	\end{equation}
	Thus, whenever $\kappa>E$, there is $c>0$ such that
	\begin{equation}\label{eq:high-error}
		|\cE_\la^\eps(f)|\lesssim\eps^cN^n.
	\end{equation}
	One may take any $c<\min(2,\kappa-E)$.
	
	\subsection{The intermediate terms.}
	
	The kernels of the intermediate terms are of the form,
	\begin{equation}\label{eq:k-scaling}
		k_{\la_j}^\eps(v)=\la_j^{-D}k_1^\eps(v/\la_j),
		\qquad \int_{H_m^n}k_1^\eps=0,
	\end{equation}
	with $k_1^\eps$ smooth and compactly supported. We assume the scales $\la_j$ are \emph{lacunary} i.e. $\la_{j+1}\geq2\la_j$.
	
	\begin{prop}\label{prop:middle}
		If $\la_1<\cdots<\la_J$ is lacunary and $f$ is supported in a cube of
		side $N$, with $|f|\leq1$, then
		\begin{equation}\label{eq:middle-square}
			\sum_{j=1}^J|\cK_{\la_j}^\eps(f)|^2 \leq CN^{2n},
		\end{equation}
		with a constant $C=C(\eps,m,Q,\De)$ independent of $J$ and the sequence. 
	\end{prop}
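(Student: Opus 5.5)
We will follow the strategy of \cite{CMP17}: write the intermediate form as a multilinear form, decompose it in frequency, and reduce the square-sum over lacunary scales to a square-function estimate for $m+1$ functions. We then bound that square function with the multilinear (Muscalu--Tao--Thiele type) estimates from \cite{MTT02}.

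\textbf{Step 1 (Fourier-side expression).} Using $\sum_i v_i=0$, rewrite
\[
\cK_{\la}^\eps(f)=\int_{\R^n}\int_{H_m^n} f(z)\prod_{i=1}^m f(z+v_i)\,k_\la^\eps(v)\,dv_H\,dz .
\]
By Plancherel in the variables $(z,v)$ this equals
\[
\int_{\xi_1+\cdots+\xi_m+\xi_0=0}\widehat f(\xi_0)\prod_i \widehat f(\xi_i)\,\widehat{k_1^\eps}(\la\,\pi(\xi))\,d\xi ,
\]
where $\pi(\xi)$ denotes the projection of $(\xi_1,\ldots,\xi_m)$ to the dual of $H_m^n$, i.e.\ $\xi_i-\bar\xi$ with $\bar\xi$ the average of the $\xi_i$. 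The properties of $k_1^\eps$ we use are these. It is a Schwartz function on $H_m^n$, since it is smooth and compactly supported. Its integral vanishes, so $\widehat{k_1^\eps}(0)=0$ and hence $|\widehat{k_1^\eps}(\eta)|\lesssim_\eps \min(|\eta|,|\eta|^{-M})$. The relevant multiplier is therefore $\sum_j \epsilon_j \widehat{k_1^\eps}(\la_j\pi(\xi))$, where the $\epsilon_j$ are signs or unimodular weights chosen so that $\sum_j|\cK_{\la_j}^\eps|^2=\sum_j\overline{\epsilon_j}\cK_{\la_j}^\eps\cdot(\ldots)$. Concretely, we will bound $\sum_j |\cK_{\la_j}^\eps(f)|$ with weights $c_j$ satisfying $\sum_j|c_j|^2\le1$ by duality, which gives \eqref{eq:middle-square}.

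\textbf{Step 2 (symbol estimates).} Lacunarity together with the vanishing moment and the decay of $\widehat{k_1^\eps}$ gives that
\[
\mathfrak m(\xi):=\sum_j c_j\,\widehat{k_1^\eps}(\la_j\pi(\xi))
\]
satisfies Coifman--Meyer/Marcinkiewicz-type bounds $|\partial^\be \mathfrak m(\xi)|\lesssim_{\eps,\be}|\pi(\xi)|^{-|\be|}$, uniformly in $J$, with constants depending on $\eps$. The point is that for fixed $\xi$ only $O(1)$ scales have $\la_j|\pi(\xi)|\sim1$, while the remaining scales contribute geometric tails. The multiplier is singular only along the subspace $\pi(\xi)=0$, i.e.\ $\xi_1=\cdots=\xi_m$. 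This is the diagonal-type singularity treated in \cite{MTT02}, of codimension $(m-1)n$ in the $mn$-dimensional space of $(\xi_1,\ldots,\xi_m)$, with $\xi_0$ determined by the linear constraint. For the simplex--center configuration the degenerate subspace is the full ``all frequencies equal'' diagonal. This is the nondegenerate Coifman--Meyer-type situation rather than a Hilbert-transform-type one, so we expect a Coifman--Meyer-type bound: the $(m+1)$-linear form with symbol $\mathfrak m$ is bounded by $\prod\|f\|_{L^{p_i}}$ with $\sum 1/p_i=1$. Choosing all $p_i=m+1$ gives the bound $\lesssim N^{n}$, using $|f|\le1$ and the support of $f$ in a cube of side $N$.

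\textbf{Step 3 (conclusion and main obstacle).} Taking the supremum over $(c_j)$ with $\ell^2$-norm at most one yields $(\sum_j|\cK_{\la_j}^\eps(f)|^2)^{1/2}\lesssim_\eps N^n$, which is \eqref{eq:middle-square}. The main obstacle is Step 2, namely verifying that the multiplier, restricted to the hyperplane $\xi_0=-\sum\xi_i$, has its singular set exactly where the multilinear estimates of \cite{MTT02} (or the smooth Coifman--Meyer theorem) apply. If the singular set only had the nondegeneracy required by \cite{MTT02} in some coordinates, we would first make a linear change of frequency variables in which the constraint matrix becomes nondegenerate, and then apply their theorem. As a fallback, we would prove the square-function bound directly. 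This would go by a Littlewood--Paley decomposition of the $f(z+v_i)$ at frequency $\la_j^{-1}$: replace all factors by their low-frequency parts $P_{\le \la_j^{-1}}f$, which kills the form since $\int k_1^\eps=0$ up to a Taylor error controlled by $\la_j|\nabla P_{\le\la_j^{-1}} f|$. We would then sum the remaining high-frequency pieces via $\sum_j\|(P_{\le \la_j^{-1}}-P_{\le (c\la_j)^{-1}})f\|_2^2\lesssim\|f\|_2^2\le N^n$, again uniformly in $J$.
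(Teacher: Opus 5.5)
Your route differs from the paper's, and it can be made to work, but Step 2 is justified by the wrong theorem.

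\textbf{Comparison of routes.} The paper applies Cauchy--Schwarz in $z$ to remove $f(z)$ and expands $\sum_j|\cdot|^2$. This gives a $2m$-linear form with kernel $K_J^\eps(v,v')=\sum_j k_{\la_j}^\eps(v)\overline{k_{\la_j}^\eps(v')}$, whose symbol is singular on $\{(\theta,\ldots,\theta,-\theta,\ldots,-\theta)\}$. The rank-one Muscalu--Tao--Thiele theorem then applies since $1<2m/2$.

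You instead linearize by duality and bound a single $(m+1)$-linear form. That is legitimate, and it even gives more. Your symbol bounds use only $\sup_j|c_j|\le1$, so you actually get $\sum_j|\cK_{\la_j}^\eps(f)|\lesssim_\eps N^n$ uniformly in $J$. This implies \eqref{eq:middle-square}, because each $|\cK_{\la_j}^\eps(f)|\le\|f\|_1\|k_1^\eps\|_{L^1}\lesssim_\eps N^n$. It also keeps $m+1$ rather than $2m$ inputs. The paper's doubling, following \cite{CMP17}, has the advantage of a fixed, coefficient-free square-function symbol.

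\textbf{The gap.} Your multiplier is not of Coifman--Meyer type. On $\Gamma=\{\xi_0+\xi_1+\cdots+\xi_m=0\}$ the symbol $\mathfrak m(\xi)=\sum_j c_j\widehat{k_1^\eps}(\la_j\pi(\xi))$ is singular along the whole $n$-dimensional subspace $\Gamma'=\{(-m\theta,\theta,\ldots,\theta):\theta\in\R^n\}$, not just at the origin. Accordingly, the form is invariant under $f_0\mapsto e(-m\theta\cdot x)f_0$ and $f_i\mapsto e(\theta\cdot x)f_i$ for $1\le i\le m$. For $m=2$ it is a bilinear Hilbert transform on $\R^n$, so Coifman--Meyer gives nothing.

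The same modulation invariance defeats your Littlewood--Paley fallback. Frequencies with $\xi_1\approx\cdots\approx\xi_m\approx\theta$ and $|\theta|\gg\la_j^{-1}$ are not damped by $\widehat{k_1^\eps}(\la_j\pi(\xi))$. So the high-frequency pieces cannot be controlled by $\sum_j\|\Delta_j f\|_2^2$.

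\textbf{The fix.} Use the integral-rank theorem of \cite{MTT02}, in its $\R^n$ form (cf.\ \cite[Section~4.4]{DPT10}). Check its hypotheses explicitly, as the paper does for its form:
\begin{enumerate}
\item $\Gamma'$ has integral rank $1<(m+1)/2$ for $m\ge2$.
\item $\Gamma'$ is a graph over each frequency block, since every entry of $(-m,1,\ldots,1)$ is nonzero.
\item On $\Gamma$ one has $|\pi(\xi)|\sim\dist(\xi,\Gamma')$, so your estimates are exactly $|\partial^\beta\mathfrak m(\xi)|\lesssim_{\beta,\eps}\dist(\xi,\Gamma')^{-|\beta|}$, uniformly in $J$ and in $c$.
\item The symmetric exponents $p_i=m+1>2$ are admissible.
\end{enumerate}
With this replacement, your Step 3 goes through as written.
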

	
	\begin{proof}
		The argument follows closely that of \cite[Section 5]{CMP17}. Applying Cauchy--Schwarz in the variable eliminates $f(z)$.  Expanding
		the square and summing in $j$ gives a $2m$-linear form with kernel
		\begin{equation}\label{eq:KJ}
			K_J^\eps(v,v')=\sum_{j=1}^J
			k_{\la_j}^\eps(v)\overline{k_{\la_j}^\eps(v')}.
		\end{equation}
		On the Fourier hyperplane
		\[
		\Gamma=\left\{(\xi_1,\ldots,\xi_m,\xi'_1,\ldots,\xi'_m):
		\sum_i(\xi_i+\xi'_i)=0\right\},
		\]
		the singular subspace is
		\begin{equation}\label{eq:Gamma-prime}
			\Gamma'=\{(\theta,\ldots,\theta,-\theta,\ldots,-\theta):
			\theta\in\R^n\}.
		\end{equation}
		To see this subspace directly, use $v_m=-\sum_{i<m}v_i$ as coordinates on
		$H_m^n$.  The kernel-frequency map is
		\begin{equation}\label{eq:kernel-frequency-map}
			L(\xi,\xi')=
			\bigl((\xi_i-\xi_m)_{i<m},(\xi'_i-\xi'_m)_{i<m}\bigr)
			\in (H_m^n)^*\times(H_m^n)^*.
		\end{equation}
		The multiplier is singular only when $L(\xi,\xi')=0$.  On $\Gamma$ this
		means that all unprimed frequencies equal some $\theta$, all primed
		frequencies equal some $\theta'$, and $m(\theta+\theta')=0$; hence
		$\theta'=-\theta$ and the singular set is exactly \eqref{eq:Gamma-prime}.
		It has integral rank one and is a graph over every frequency block.  In the
		notation of the integral-rank theorem there are $2m$ input blocks, each of
		dimension $n$, while the singular space has dimension $n$.  Thus its rank
		is $1<m=(2m)/2$, and the block-graph nondegeneracy is exactly the
		hypothesis in the rank-one integral case of
		\cite[Section~4.4]{DPT10}; see also \cite{CMP17,MTT02}.
		
		The symbol verification is short.  In kernel coordinates,
		\[
		m_J(\eta,\eta')=\sum_{j=1}^J
		\widehat{k_1^\eps}(\la_j\eta)
		\overline{\widehat{k_1^\eps}(\la_j\eta')}.
		\]
		Put $\varrho=|(\eta,\eta')|$.  For $\la_j\varrho\leq1$, the identity
		$\widehat{k_1^\eps}(0)=0$ controls the zeroth derivative, while the
		geometric series in $\la_j$ controls every positive derivative.  For
		$\la_j\varrho>1$, Schwartz decay gives the corresponding reverse
		geometric series.  Consequently, for every derivative required by the
		multiplier theorem,
		\begin{equation}\label{eq:MTT-symbol}
			|\partial^\beta m_J(\xi)|
			\leq C_{\beta,\eps}\dist(\xi,\Gamma')^{-|\beta|},
		\end{equation}
		uniformly in $J$.  Apply the theorem with all $2m$ input exponents equal to
		$2m$.  Their $L^{2m}$ norms contribute $N^n$, and the initial
		Cauchy--Schwarz factor contributes another $N^n$, proving
		\eqref{eq:middle-square}.
	\end{proof}
	
	\begin{proof}[Proof of \cref{thm:euclidean}]
		Suppose arbitrarily large scales are missing and select a finite lacunary
		sequence $\la_1<\cdots<\la_J$ of missing scales.  Upper Banach density
		provides a cube of side $N\gg\la_J$ in which $A$ has density bounded below by $\de$. Replace $A$ by its intersection with this cube and put $f=\1_A$.
		
		At every missing scale $\cN_{\la_j}(f)=0$.  Choose $\eps$ so small that
		\eqref{eq:high-error} is at most half the lower bound
		\eqref{eq:main-lower}.  Since $c_\eps\asymp1$,
		\eqref{eq:forms-decomp} gives
		\[
		|\cK_{\la_j}^\eps(f)|\geq c_{m,n}(\de) N^n
		\qquad(1\leq j\leq J).
		\]
		Therefore
		\[
		J\,c_{m,n}(\de)^2N^{2n}
		\leq\sum_{j=1}^J|\cK_{\la_j}^\eps(f)|^2
		\lesssim_\eps N^{2n},
		\]
		contradicting \cref{prop:middle} for large $J$.  
	\end{proof}
	
	\section{The discrete settings}\label{sec:lattice-proof} Here we describe the proofs of our main results in the setting of the integer lattice $\Z^n$.
	Fix a level $L$ and write
	\[
	P:=L^{1/r}
	\]
	for its physical scale.
	We insert the same cut-off function $\chi$ as in the Euclidean case, and define
	\begin{equation}\label{eq:discrete-kernel}
		\sigma_L^{\Z}(v):=
		P^{rE-D}\chi(v/P)
		\prod_{i<j}\1_{\{Q(v_i-v_j)=L\ell_{ij}\}},
		\qquad v\in H_{m,\Z}^n.
	\end{equation}
	Note that $\sigma_L^{\Z}$ is normalized if the system $\Phi$ has sufficiently large rank \cite{Birch62}, which we verify in \cref{subsec:rank-Phi}.
	
	For $f$ supported in a box of side $N$, the normalized lattice count is
	\begin{equation}\label{eq:discrete-count}
		\cN_L^{\Z}(f):=N^{-n}\sum_{z\in\Z^n}
		\sum_{v\in H_{m,\Z}^n}
		f(z)\prod_{i=1}^mf(z+v_i)\,\sigma_L^{\Z}(v).
	\end{equation}
	Fourier inversion gives the representation
	\begin{equation}\label{eq:circle-representation}
		\sigma_L^{\Z}(v)=P^{rE-D}\chi(v/P)
		\int_{\T^E}e\bigl(\alpha\cdot(\Phi(v)-L\ell)\bigr)\,d\alpha,
	\end{equation}
	
	\subsection{The rank of the map $\Phi$}
	\label{subsec:rank-Phi}
	We prove the lower bound for the rank of the system $\Phi$ by exploiting the non-singularity of the form $Q$. This will be needed to apply the circle method to the Diophantine system $\Phi(v)=L\ell$.
	
	\begin{proof}[Proof of \cref{prop:system-rank}]
		For $0\ne t\in\C^E$, let $P_t=t\cdot\Phi$.  By
		\cref{lem:pair-transform} we can choose $a<b$ with $C_{ab}(t)\ne0$ and use
		\eqref{eq:pair-coordinates}. Fixing all edges $v_i$ for $i\neq a,b$ and setting $s=(v_a-v_b)/2$ and $u=(v_i)_{i\neq a,b}$, we have
		\[
		P_t(v)=P_t(s,u)=C_{ab}(t)Q(s)+\text{terms of lower degree in }s.
		\]
		Let
		\[
		F_j^{u}(s):=\partial_{s_j}P_t(u,s),
		\qquad 1\leq j\leq n.
		\]
		These are polynomials of degree at most \(r-1\) in \(s\), and their
		homogeneous parts of degree \(r-1\) are
		\[
		C_{ab}(t)\partial_jQ(s),
		\qquad 1\leq j\leq n.
		\]
		Since \(Q\) is non-singular, the equations
		\[
		\partial_1Q(s)=\cdots=\partial_nQ(s)=0
		\]
		have no nonzero common solution in \(\mathbb C^n\).
		
		We claim that, for every fixed \(u\), the common zero set
		\[
		\{s\in\mathbb C^n:
		F_1^{u}(s)=\cdots=F_n^{u}(s)=0\}
		\]
		is finite. Indeed, homogenize the polynomials \(F_j^{u}\) in the
		variables \((s,w)\). On the hyperplane at infinity \(w=0\), the
		homogenized equations reduce to
		\[
		C_{ab}(t)\partial_jQ(s)=0,
		\qquad 1\leq j\leq n,
		\]
		which have no projective solution \([s:0]\), however positive-dimensional
		affine common zero set would have a projective closure meeting the
		hyperplane at infinity.
		
		Consequently, every fiber of the projection
		\[
		\{(u,s):\nabla_sP_t(u,s)=0\}\longrightarrow\mathbb C^{D-n},
		\qquad (u,s)\longmapsto u,
		\]
		is zero-dimensional. The fiber-dimension theorem therefore gives
		\[
		\dim\{(u,s):\nabla_sP_t(u,s)=0\}\leq D-n.
		\]
		Since
		\[
		V_{P_t}^{*}
		=\{(u,s):\nabla_{u,s}P_t(u,s)=0\}
		\subseteq
		\{(u,s):\nabla_sP_t(u,s)=0\},
		\]
		we conclude that
		\[
		\dim V_{P_t}^{*}\leq D-n.
		\]
		Consider the ``incidence" variety,
		\[
		\mathcal V_\Phi:=\{(v,[t])\in H_{m,\C}^n\times\mathbb P^{E-1}:
		\nabla_v(t\cdot\Phi)(v)=0\},
		\]
		where $[t]$ denotes the projective line through $t$. Every fiber over $[t]$ has dimension at most $D-n$, so $\dim\mathcal V_\Phi\leq D-n+E-1$.  Its projection to $H_{m,\C}^n$ is $V_\Phi^*:=\{v\in H_{m,\C}^n:\ rank\, Jac_\Phi (v)<E\}$. Indeed, if $v\in V_\Phi^*$ then there exists $t\in\C^E,\ t\ne 0$ such that $\nabla_v (t\cdot\Phi)=0$. Therefore
		$\dim V_\Phi^*\leq D-n+E-1$, which implies
		$\cB(\Phi)\geq n-E+1$.
	\end{proof}
	
	\subsection{The circle-method decomposition}
	
	Fix a physical scale $P$, or equivalently a level $L=P^r$. As usual in the circle method, we decompose the range of integration first into major/minor arcs depending on a parameter $\eps=\eps(\de)$. We say that $\al$ is in a major arc at height $R$ if there exists $1\leq q\leq R$ such that $\|q\al\|\leq R/L$, where $\|\be\|=|\{\be\}|_\infty$ and $\{\be\}\in [-\frac{1}{2},\frac{1}{2}]^E$ is the fractional part of $\be\in\R^E$. We denote the union of these major arcs by $\mathfrak M_{P,R}$ and use heights $\eps^{-1/2}\leq R\leq P$. We enlarge the major arcs at height $R=\eps^{-1/2}$ by taking $q=q(\eps):=\operatorname{lcm}\{1\leq q\leq \eps^{-1/2}\}$ and cubes of side comparable to $\eps^{-1/2}/L$ centered at $a/q\in\T^E$, with $a\in[q]^E$; denote their union by $\mathfrak M'_{\eps,P}$. We also assume throughout this section that $q\mid L$. Then we have the decomposition
	\begin{equation}\label{eq:discrete-decomp}
		\sigma_L^{\Z}
		=c_\eps\omega_{q,L}+k_{q,L}^\eps+e_{q,L}^\eps,
	\end{equation}
	where
	\[
	\Psi_{P,\tau}(\alpha)
	:=\sum_{k\in\Z^E}\widehat\psi\bigl(\tau P^r(\alpha-k)\bigr),
	\qquad \tau>0,
	\]
	is the periodization of the Euclidean multiplier.  Thus the following torus
	representations are exact:
	\begin{align}\label{5.6}
		\om_{q,L}(v)&:=P^{rE-D}\chi(v/P) \int_{\T^E} \sum_{a\in [q]^E} e\big(\al\cdot\,(\Phi(v)-L\ell)\big) \,\Psi_{P,1}(\al-a/q)\,d\al,\nonumber\\
		\om_{q,L}^\eps(v)&:= P^{rE-D}\chi(v/P) \int_{\T^E} \sum_{a\in [q]^E} e\big(\al\cdot\,(\Phi(v)-L\ell)\big) \,\Psi_{P,\eps}(\al-a/q)\,d\al.
	\end{align}
	Set
	\[
	k_{q,L}^\eps:=\omega_{q,L}^\eps-c_\eps\omega_{q,L},
	\qquad e_{q,L}^\eps:=\sigma_L^\Z-\omega_{q,L}^\eps.
	\]
	Changing variables $\al:=\al-a/q$ one obtains 
	\begin{equation}\label{5.7}
		\om_{q,L}(v)=\sum_{a\in [q]^E} e\bigg(\frac{a\cdot(\Phi(v)-L\ell)}{q}\bigg) \om_P(v) = q^E\,\1_{\{\Phi(v)\equiv 0 \pmod{q}\}}\,\om_P(v)=:\chi_q(v)\om_P(v),
	\end{equation}
	assuming $v\in H_{m,\Z}^n$ and $q=q(\eps)\mid L$. Thus the extra arithmetic factor in the discrete case is
	\begin{equation}\label{eq:chiq}
		\chi_q(v):=q^E\1_{\{\Phi(v)\equiv0\pmod q\}},
	\end{equation}
	By scaling, we have that
	\begin{equation}\label{eq:discrete-coarse-middle}
		\omega_{q,L}(v)=\chi_q(v)\omega_P(v),\qquad
		k_{q,L}^\eps(v)=\chi_q(v)k_P^\eps(v),
	\end{equation}
	$\om_P$ and $k_P^\eps$ being the kernels in \eqref{eq:kernel-decomp}.

	For $0\leq f\leq1$ supported in $[1,N]^n$, write
	\begin{equation}\label{eq:discrete-main-form}
		\cM_{q,L}^{\Z}(f):=N^{-n}\sum_{z\in\Z^n}
		\sum_{v\in H_{m,\Z}^n}
		f(z)\prod_{i=1}^mf(z+v_i)\omega_{q,L}(v),
	\end{equation}
	and define $\cK_{q,L}^{\eps,\Z}$ and
	$\cE_{q,L}^{\eps,\Z}$ analogously using $k_{q,L}^\eps$ and
	$e_{q,L}^\eps$.  Thus \eqref{eq:discrete-decomp} gives
	\begin{equation}\label{eq:discrete-forms-decomp}
		\cN_L^{\Z}(f)=c_\eps\cM_{q,L}^{\Z}(f)
		+\cK_{q,L}^{\eps,\Z}(f)+\cE_{q,L}^{\eps,\Z}(f).
	\end{equation}
	
	\medskip
	\begin{prop}[Main term lower bound.]\label{prop:disc-main}
		Assume \eqref{eq:uniform-zero}.  For every $\delta>0$ there are
		$c=c(\delta,m,n,Q,\De)>0$, $\eta=\eta(\delta,m,n,Q,\De)>0$, and
		$C=C(\delta,m,n,Q,\De)<\infty$ such that,
		if $A\subset[1,N]^n\cap\Z^n$ has density at least $\delta$, $q\mid L$,
		$P=L^{1/r}$, and
		\[
		Cq\leq P\leq\eta N,
		\]
		then
		\begin{equation}\label{eq:disc-main}
			\cM_{q,L}^{\Z}(\1_A)
			\geq c(\delta,m,n,Q,\De)>0,
		\end{equation}
	\end{prop}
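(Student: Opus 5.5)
The plan is to reduce \eqref{eq:disc-main} to a density statement on the finite group $\Lambda/q\Lambda$, with all loss in $q$ absorbed by the local density $\beta_*$ from \eqref{eq:uniform-zero}. By \eqref{eq:discrete-coarse-middle} the main form is a weighted count:
\[
\cM_{q,L}^{\Z}(\1_A)=N^{-n}q^E\sum_{z}\sum_{\substack{v\in H^n_{m,\Z}\\ \Phi(v)\equiv0\ (q)}}\1_A(z)\prod_i\1_A(z+v_i)\,\omega_P(v).
\]
I will not use the crude sublattice $v\in q\Lambda$, which loses a factor $q^{E-D}$. Instead I partition $v$ according to its class $b\in G_q:=\{b\in\Lambda/q\Lambda:\Phi(b)\equiv0\ (q)\}$, and $z$ according to its class $c\in(\Z/q\Z)^n$. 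By \eqref{eq:uniform-zero}, $\#G_q=\beta_q(0)\,q^{D-E}\ge\beta_*q^{D-E}$. The factor $q^E$ then exactly cancels the sparsity of $G_q$, leaving $q^{-D}\#G_q\ge\beta_*$, provided each good class carries roughly its proportional share of configurations.

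\textbf{Step 1 (local counts).} Write $z=c+qy$ and $v=b+qw$, with $w\in\Lambda$ and $\sum_i w_i=0$ since $\sum b_i=0$. Put $A_c:=\{y:c+qy\in A\}$, a subset of a box of side $\asymp N/q$. Since $P\ge Cq$, the weight $\omega_P(b+qw)$ is, up to $O(q/P)$, a bump of width $P/q\ge C$ in $w$, bounded below by $\gg P^{-D}$ for $|w|\le c_0P/q$. The class-$(c,b)$ contribution is therefore at least a constant times
\[
T(c,b):=\Big(\frac qN\Big)^{n}\Big(\frac qP\Big)^{D}\sum_{y}\sum_{|w|\le c_0P/q}\1_{A_c}(y)\prod_i\1_{A_{c+b_i}}(y+w_i),
\]
and the total satisfies
\[
\cM_{q,L}^{\Z}(\1_A)\gg \mathbb E_{c}\sum_{b\in G_q}q^{E-D}\,T(c,b).
\]

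\textbf{Step 2 (regularity and transfer to $\Z_q^n$).} Apply an arithmetic regularity lemma (a Green-type $U^2$/Bohr decomposition at scale $P/q$, whose complexity depends only on $\delta$) to the function $F(c,y)=\1_A(c+qy)$. The linear system $(y,y+w_1,\dots,y+w_m)$ with $\sum w_i=0$ has complexity one, so by the generalized von Neumann inequality (the discrete analogue of \cref{lem:pair-U3} with linear phases) the uniform part contributes $o_\delta(1)$. The structured part is essentially constant on cells, and Step 1 combined with \cref{lem:Varnavides} applied on each cell reduces the problem to
\[
\mathbb E_{c\in\Z_q^n}\,\mathbb E_{b\in G_q}\,g(c)\prod_i g(c+b_i)\ \ge\ c'(\delta),
\]
where $g(c)$ is the (cell-averaged) density of $A_c$, so $\mathbb E_cg\ge\delta$. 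Multiplying by $q^{E-D}\#G_q\ge\beta_*$ then gives \eqref{eq:disc-main}.

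\textbf{Step 3 (main obstacle: the lower bound on $\Z_q^n$ uniformly in $q$).} This is the step where a finite multidimensional Szemer\'edi theorem is needed, and it is where uniformity in $q$ must come from. My first attempt is to exploit the structure of $G_q$. It is a union of cosets of $q\Lambda$, closed under the dilations $b\mapsto kb$, and by the Hensel argument behind \cref{lem:Q-regularity} it contains, for each $p^s\| q$, a full-measure neighbourhood $v^{(p)}+p^{s'}\Lambda$ of a lifted non-singular zero, with $s'$ depending only on $Q,\De$. By the Chinese remainder theorem, $G_q$ therefore contains a product set $B=\prod_p\big(v^{(p)}+p^{s'}\Lambda\big)$ of relative density $\gg_{Q,\De}1$ inside $G_q$. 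I would then average over $b\in B$ and over dilates $kb$, so that the configurations $c,c+kb_1,\dots,c+kb_m$ for fixed $b$ form a simplex-center pattern along the progression in $k$. Applying the multidimensional Szemer\'edi theorem (Furstenberg--Katznelson) to $g$ restricted to the one-parameter families $k\mapsto c+kb$, and then Varnavides-averaging over $c$ and $b\in B$, yields a bound $c'(\delta)$ that depends on $\delta,m,n,Q,\De$ only. This is ineffective, which is consistent with the remark in the introduction.

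The delicate points are twofold: $\mathbb Z_q$ has zero divisors, and $g$ may correlate with residue classes. I would handle both by passing first to the $p$-primary components and using the closure of $B$ under units. If this fails, the fallback is to choose $\eta$ and $C$ so small and large respectively that one can first pigeonhole a cell on which $g$ is nearly constant on $B$-translates, and then apply \cref{lem:Varnavides} directly.
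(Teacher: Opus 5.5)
You get the bookkeeping in Step 1 right: the factor $q^E$ cancels $\#G_q\,q^{-D}\ge\beta_*$, and the sublattice $q\Lambda$ alone is too sparse. But the proof breaks exactly at the step you flag as the main obstacle: no lower bound uniform in $q$ is actually established.

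\textbf{Step 3.} As sketched, it does not work. For fixed $c$ and a fixed frame $b$, the sets $\{k: c+kb_i\in A\}$, $1\le i\le m$, lie on $m$ different lines through $c$ and are unrelated subsets of $[K]$. So no Szemer\'edi-type theorem forces a common $k$, nor puts $c$ itself in $A$. The multidimensional theorem only applies once the base point also moves over the $(m-1)$-parameter grid $u\mapsto c+\sum_{j<m}u_jb_j$; there the pattern is a homothetic copy of $P_m=\{0,e_1,\dots,e_{m-1},-\sum_i e_i\}$. The Hensel product set $B$, the $p$-primary splitting and the closure under units are beside the point. The fallback has no justification: $g$ is an arbitrary function on $(\Z/q\Z)^n$ and is not affected by the choice of $\eta$ and $C$.

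\textbf{Step 2.} This is a further unproved reduction. A configuration couples the fibres $A_c,A_{c+b_1},\dots,A_{c+b_m}$. A $U^2$ decomposition does not produce structured parts that are constant on cells of scale $P/q$; take $A_c$ to be a parity set in $y_1$. You also have not shown how to make the Bohr structure common to all $q^n$ fibres with complexity independent of $q$.

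\textbf{The missing idea.} Run Furstenberg--Katznelson directly on integer frames, with no residue splitting and no regularity lemma; this is the paper's argument.
\begin{enumerate}
\item Choose $\theta>0$ with $\omega_P(v)\gg P^{-D}$ whenever $\max_i|v_i|_\infty\le\theta P$.
\item Fix an integer frame $R=(r_1,\dots,r_{m-1},-\sum_j r_j)$ with $|r_j|_\infty\le\theta P/((m-1)K)$ and $\Phi(R)\equiv0\pmod q$. Since $P\ge Cq$, every residue frame in $G_q$ has $\gg(P/(Kq))^D$ such lifts, so there are $\gtrsim_\de\beta_*P^Dq^{-E}$ such $R$.
\item For $x\in(\Z/(4N+1)\Z)^n$ put $B_{x,R}=\{u\in[K]^{m-1}: x+\sum_j u_jr_j\in A\}$. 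Translation invariance gives $\mathbb E_x|B_{x,R}|K^{1-m}\ge 5^{-n}\de$, however $A$ is distributed among residue classes. Hence for $\gg N^n$ values of $x$ one finds $a+tP_m\subset B_{x,R}$ with $t\le K=K(\de)$.
\item Put $z=x+\sum_j a_jr_j$ and $v=tR$. This is a configuration in $A$ with $\max_i|v_i|_\infty\le\theta P$; the condition $P\le\eta N$ prevents wrap-around.
\item By homogeneity, $\Phi(v)=t^r\Phi(R)\equiv0\pmod q$. Closure of the null set under integer dilations is the only arithmetic input beyond $\beta_*$.
\item Once $(a,t)$ is fixed, $v$ determines $R=v/t$ and then $x$ exactly over $\Z$. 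So the multiplicity is at most $K^m$, and the count is $\gtrsim_\de\beta_*N^nP^Dq^{-E}$.
\item The factors $q^E$ from $\chi_q$ and $P^{-D}$ from $\omega_P$ then give $\cM^{\Z}_{q,L}(\1_A)\gtrsim_\de\beta_*$, independent of $q$.
\end{enumerate}

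Zero divisors and correlations of $A$ with residue classes never enter. The same grid argument also proves your reduced inequality on $(\Z/q\Z)^n$, where zero divisors cost only a factor $\gcd(t,q)^D\le K^D$. But once you have that argument, you can apply it to $A\subset\Z^n$ directly and drop Step 2.
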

	
	We remark that it is crucial in our argument that the lower bound is independent of $q$, $L$, and $N$.
	
	\begin{proof}
		Let $s=m-1$ and identify $H_{m,\Z}^n$ with $(\Z^n)^s$ by writing
		\begin{equation}\label{eq:centered-frame}
			R=(r_1,\ldots,r_s,r_m),\qquad
			r_m=-\sum_{j=1}^sr_j.
		\end{equation}
		Choose $\theta>0$ so small that
		\begin{equation}\label{eq:omega-origin-lower}
			\omega_P(v)\geq c_0\,P^{-D},
			\quad\text{whenever }\max_i|v_i|_\infty\leq\theta P.
		\end{equation}
		This is possible because $\chi_0$ is positive near the origin and $\psi$
		is positive near $-\ell$.
		
		We embed $A$ in the finite group
		\[
		\Gamma_N:=(\Z/(4N+1)\Z)^n.
		\]
		Its density in $\Gamma_N$ is at least
		$\delta_0:=5^{-n}\delta$. By the finite multidimensional Szemer\'edi theorem \cite{FK78} there exists $K=K(\delta_0,m)$ such that
		every subset of $[K]^s$ of density at least $\delta_0/2$ contains a
		nontrivial homothetic copy
		\begin{equation}\label{eq:coefficient-copy}
			a+tP_m\subset[K]^s,\qquad 1\leq t\leq K,
		\end{equation}
		where $P_m=\{0,e_1,\ldots,e_{m-1},-\sum_{i=1}^{m-1} e_i\}$.
		
		Let $\mathcal R_q(P)$ be the set of integral frames \eqref{eq:centered-frame}
		such that
		\begin{equation}\label{eq:small-null-frames}
			|r_j|_\infty\leq\frac{\theta P}{sK}\quad(1\leq j\leq s),
			\qquad \Phi(R)\equiv0\pmod q.
		\end{equation}
		Each frame $v\in H_{m,(Z/qZ)}^n$ such that $\Phi\equiv 0\pmod{q}$ has
		$\gg(P/(Kq))^D$ such lifts if $P\geq Cq$. Thus it follows from
		\eqref{eq:zero-density}--\eqref{eq:uniform-zero} that
		\begin{equation}\label{eq:small-frame-count}
			|\mathcal R_q(P)|
			\gtrsim_{\delta,m,n,Q,\De}\beta_*P^Dq^{-E}.
		\end{equation}
		
		Fix $R\in\mathcal R_q(P)$.  For $x\in\Gamma_N$, define the coefficient
		set
		\[
		B_{x,R}:=\left\{u\in[K]^s:
		x+\sum_{j=1}^su_jr_j\in A\right\}.
		\]
		Translation invariance of $\Gamma_N$ gives
		\[
		\mathbb E_{x\in\Gamma_N}\frac{|B_{x,R}|}{K^s}
		=\frac{|A|}{|\Gamma_N|}\geq\delta_0.
		\]
		Consequently, $\gg_{\delta,n}|\Gamma_N|$ choices of $x$ have
		$|B_{x,R}|\geq(\delta_0/2)K^s$.  For each such $x$, choose a copy
		\eqref{eq:coefficient-copy} and put
		\[
		z=x+\sum_{j=1}^sa_jr_j,
		\qquad v_j=tr_j\ (1\leq j\leq s),
		\qquad v_m=-t\sum_{j=1}^sr_j.
		\]
		Then $z,z+v_1,\ldots,z+v_m$ all belong to $A$.  Moreover,
		homogeneity gives
		\begin{equation}\label{eq:dilate-preserves-nullity}
			\Phi(v)=t^r\Phi(R)\equiv0\pmod q,
		\end{equation}
		and \eqref{eq:small-null-frames} gives
		$\max_i|v_i|_\infty\leq\theta P$.
		
		The map $(R,x,a,t)\mapsto(z,v)$ has multiplicity at most $K^{s+1}$:
		after $t$ and $a$ are fixed, $R=v/t$ and
		$x=z-\sum_ja_jr_j$ are fixed.  Taking $\eta$ sufficiently small that
		$\theta\eta<1$ also prevents wraparound in $\Gamma_N$. Thus every modular configuration counted is an actual integer configuration. Summing over
		$R\in\mathcal R_q(P)$ and using \eqref{eq:small-frame-count}, we obtain
		\begin{equation}\label{eq:unweighted-null-count}
			\sum_z\sum_{\substack{v\in H_{m,\Z}^n, |v|_\infty\leq\theta P\\
					\Phi(v)\equiv0\pmod q}}
			\1_A(z)\prod_{i=1}^m\1_A(z+v_i)
			\gtrsim_{\delta,m,n,Q,\De}\beta_*N^nP^Dq^{-E}.
		\end{equation}
		Finally, \eqref{eq:chiq}, \eqref{eq:omega-origin-lower}, and
		\eqref{eq:unweighted-null-count} give
		\[
		\cM_{q,L}^{\Z}(\1_A)
		\gtrsim N^{-n}q^EP^{-D}
		\bigl(\beta_*N^nP^Dq^{-E}\bigr)
		\gtrsim_{\delta,m,n,Q,\De}\beta_*>0,
		\]
		which proves \eqref{eq:disc-main}.
	\end{proof}
	
	For a smooth cutoff $W$ on $H_m^n$, introduce the normalized oscillatory
	form
	\[
	\Lambda_{\alpha,P}(f):=
	N^{-n}P^{-D}\sum_z\sum_{v\in H_{m,\Z}^n}
	W(v/P)f_0(z)\prod_{i=1}^mf_i(z+v_i)e(\alpha\cdot\Phi(v)).
	\]
	
	\begin{rem} We remark that for $m=2$, i.e. for three-term progressions $x-y,x,x+y$, one can use Roth's theorem, which provides a quantitative bound. This simplifies much of the discussion in Section~\ref{sec:quantitative-lattice} and leads to a quick double-exponential bound for \cref{thm:quantitative-lattice}. Thus it would be desirable to obtain a quantitative lower bound for $m\geq 3$ as well, bypassing the multidimensional Szemer\'edi theorem.
	\end{rem}

	\begin{lem}\label{lem:pruned-lattice}
		For \(M\geq2\), define
		\[
		\mathfrak M_C(P;M)
		:=
		\bigcup_{1\leq q\leq M}
		\left\{
		\alpha\in\T^E:
		\|q\,C_r\alpha\|_{\T^E,\infty}\leq MP^{-r}
		\right\}.
		\]
		Suppose that
		\[
		E(E+1)<\kappa_0<\kappa,\quad\text{with}\quad \kappa=\frac{n}{(r-1)2^{r-1}}.
		\]
		Let
		\[
		\gamma:=\frac{\kappa_0}{E}-(E+1)>0.
		\]
		Then, for every fixed \(M_0\geq2\),
		\begin{equation}\label{eq:integrated-minor}
			P^{rE}
			\int_{\T^E\setminus\mathfrak M_C(P;M_0)}
			|\Lambda_{\alpha,P}(f)|\,d\alpha
			\ll
			M_0^{-\gamma},
		\end{equation}
		uniformly for \(|f_i|\leq1\).
	\end{lem}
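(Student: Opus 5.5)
The plan is to bound $|\Lambda_{\alpha,P}(f)|$ pointwise in $\alpha$ through the $U^{r-1}$ norm of a single-variable phase $e(c\,Q(s)+\text{lower order})$. Birch's minor-arc lemma then turns this into a bound that decays with the ``height'' of $\alpha$. Finally we integrate over dyadic height shells outside $\mathfrak M_C(P;M_0)$.

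\textbf{Pointwise bound.} Fix $\alpha\in\T^E$ and put $\beta:=C_r\alpha$, so $\beta_{ab}=C_{ab}(\alpha)$ modulo integers; this is legitimate because $C_r$ has integer entries. In the coordinates \eqref{eq:pair-coordinates}, with $u=(v_i)_{i\neq a,b}$ fixed, the phase $\alpha\cdot\Phi(v)$ is
\[
C_{ab}(\alpha)Q(s)+R_u(s),\qquad \deg R_u<r.
\]
There is a small point here. Over $\Z$ the variable $s=(v_a-v_b)/2$ is a half-integer, so we split $v_a-v_b$ by parity. On each parity class we write $v_a-v_b=2s+\epsilon$ with $\epsilon\in\{0,1\}^n$ fixed, and the top coefficient becomes $C_{ab}(\alpha)$ again, up to a harmless rescaling by $2^r$ that is absorbed into $C_r$.

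We then apply the discrete analogue of \cref{lem:pair-U3}: three Cauchy--Schwarz steps in $(z,s)$ eliminate $f_0,f_a,f_b$, after $u$ is frozen and absorbed into the other factors. This gives
\[
|\Lambda_{\alpha,P}(f)|\ll \max_u \bigl\|W_u(\cdot/P)\,e\bigl(\beta_{ab}Q+R_{u,<r}\bigr)\bigr\|_{U^3_P}\le \max_u\|\cdot\|_{U^{r-1}_P}
\]
for every pair $a<b$. This uses monotonicity of Gowers norms, $r\ge4$, and a smooth cutoff $W_u$ obtained by slicing $W$.

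By \cref{lem:Birch-minor-Gowers}, together with \cref{rem:smooth-pruning} to remove $P^\eps$ losses, the right side is $\ll H^{-\kappa_0}$ whenever $\beta_{ab}$ is not in a single-variable major arc of height $H$. That is, unless there is a $q\le H$ with $\|q\beta_{ab}\|\le HP^{-r}$. We take the minimum over the pairs. Then $|\Lambda_{\alpha,P}|\ll H^{-\kappa_0}$ unless every coordinate $\beta_{ab}$ lies in a major arc of height $H$. In that case a common denominator $q\le H^E$ (the product of the $q_{ab}$) places $\beta$ in the simultaneous box $\{\|qC_r\alpha\|\le H^{E+1}P^{-r}\}$. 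We conclude that if $\alpha\notin\mathfrak M_C(P;H^{E+1})$, then $|\Lambda_{\alpha,P}|\ll H^{-\kappa_0}$.

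\textbf{Integration over shells.} Set $M=H^{E+1}$, so that on $\mathfrak M_C(P;2M)\setminus\mathfrak M_C(P;M)$ we have $|\Lambda|\ll M^{-\kappa_0/(E+1)}$. The measure of $\mathfrak M_C(P;2M)$ in $\T^E$ is at most
\[
\sum_{q\le 2M}\ \#\{\text{pre-images under }C_r\}\cdot q^{E}\,(2M/(qP^r))^E\ll M^{E+1}P^{-rE}.
\]
This uses that $C_r$ is an invertible integer matrix; its determinant is a product of the eigenvalues in \cref{lem:pair-transform}, which are nonzero, so it only contributes a bounded multiplicity factor. Summing over dyadic $M\ge M_0$ gives
\[
P^{rE}\int_{\T^E\setminus\mathfrak M_C(P;M_0)}|\Lambda|\ll\sum_{M\ \mathrm{dyadic}\ \ge M_0}M^{E+1-\kappa_0/(E+1)}.
\]
The tail beyond $M\approx P^{c}$ is covered by Birch's range condition, where the minor-arc bound is at its strongest and the total measure is $1$, so the large-$M$ range uses $H\asymp P^{c}$.

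\textbf{Main obstacle.} The delicate point is the exponent bookkeeping: we need the sum to converge with decay exactly $M_0^{-\gamma}$, where $\gamma=\kappa_0/E-(E+1)$. This forces the passage from one coordinate to simultaneous approximation to lose only $H^{E}$, not $H^{E+1}$, in the parameter. To achieve this I would parametrize by the height $H$ rather than by $M$. Write $\mathfrak M_C(P;H)$ for the simultaneous arcs with denominators up to $H^{E}$ and width $H^{E}P^{-r}$; their measure is at most $H^{E(E+1)}P^{-rE}$ up to a loss, against a decay of $H^{-\kappa_0}$. Matching this with the definition of $\mathfrak M_C(P;M)$ by $M=H^E$ gives the exponent $\kappa_0/E-(E+1)$. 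Verifying that the nesting and the measure count really have this form, together with the uniformity in $u$ of Birch's bound, is where I expect most of the work.
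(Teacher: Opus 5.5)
Your route is the paper's: the pair transform via $C_r$, three Cauchy--Schwarz steps down to the $U^3\le U^{r-1}$ norm of $e((C_r\alpha)_{ab}Q+R)$, Birch's pruned single-variable bound, a common denominator from the product of the $E$ coordinate denominators, and dyadic shells. The shell measure $\ll M^{E+1}P^{-rE}$ comes from $qC_r$ being a Haar-measure-preserving surjection of $\T^E$. Two points need tightening.

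First, the width $H^{E+1}P^{-r}$ in your first pass is too lossy. It gives shell contributions $M^{E+1-\kappa_0/(E+1)}$, which are summable only for $\kappa_0>(E+1)^2$, not under the hypothesis $\kappa_0>E(E+1)$. The correction in your last paragraph ($M=H^E$) is exactly what the paper does, and the verification you expect to be the main work is one line. Suppose $q_{ab}\le H$ and $\|q_{ab}\beta_{ab}\|\le HP^{-r}$ for every edge, and put $q=\prod q_{ab}\le H^E$. Then
\[
\|q\beta_{ab}\|\le \frac{q}{q_{ab}}\,\|q_{ab}\beta_{ab}\|\le H^{E-1}\cdot HP^{-r}=H^EP^{-r},
\]
so $\alpha\in\mathfrak M_C(P;H^E)$. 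With $H=M^{1/E}$ this gives $|\Lambda_{\alpha,P}(f)|\ll M^{-\kappa_0/E}$ off $\mathfrak M_C(P;M)$. Each shell then contributes $M^{E+1-\kappa_0/E}=M^{-\gamma}$.

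Second, no separate tail estimate is needed. Apply Dirichlet's simultaneous approximation to $C_r\alpha$ with $H=P^{r/(E+1)}$. This gives $q\le H^E$ and $\|qC_r\alpha\|_\infty\le H^{-1}$, that is, $\mathfrak M_C(P;M_*)=\T^E$ for $M_*\asymp P^{rE/(E+1)}$. So the dyadic sum simply stops at $M_*$. The corresponding heights $H\le P^{r/(E+1)}<P^{r-1}$ stay inside Birch's range.

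A minor point: the parity split is unnecessary. With the other vertices fixed, $v_a-v_b=w-2v_b\equiv w \pmod 2$, so there is only one class. There is also no $2^r$ rescaling to absorb into $C_r$; the top coefficient is exactly $C_{ab}(\alpha)$. The paper's substitution $v_b=-u$, $v_a=w+u$ keeps $u$ integral directly, since $Q(w+2u)$ has top part $2^rQ(u)$ and $Q(-u-v_j)=Q(u+v_j)$ for even $r$.
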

	
	\begin{proof}
		For \(H\geq2\), put
		\[
		\mathfrak M_1(P;H)
		:=
		\bigcup_{1\leq q\leq H}
		\left\{
		\theta\in\T:
		\|q\theta\|_{\T}\leq HP^{-r}
		\right\}.
		\]
		Taking \(H=P^{(r-1)\vartheta}\), these are, up to harmless
		constant changes, the major arcs \(\mathfrak M_P(\vartheta)\)
		of \cref{lem:Birch-minor-Gowers}.  The standard dyadically ``pruned" consequence of
		that lemma therefore gives, for every \(\kappa_0<\kappa\),
		\begin{equation}\label{eq:scalar-pruned}
			\left\|
			W_0(\cdot/P)e(\theta Q+R_{<r})
			\right\|_{U^{r-1}_P}
			\ll
			H^{-\kappa_0},\quad\text{for}\quad
			\theta\notin\mathfrak M_1(P;H),
		\end{equation}
		uniformly in the coefficients of \(R_{<r}\) and for the uniformly
		smooth family of weights \(W_0\) arising below.  The remainder in
		\eqref{eq:scalar-pruned} is uniform under the dyadic summation used
		below.
		
		Fix an edge \(a<b\) and all vertices other than \(v_a,v_b\).  Using
		\[
		w=-\sum_{j\neq a,b}v_j,\qquad
		v_b=-u,\qquad v_a=w+u,
		\]
		we have
		\[
		\alpha\cdot\Phi(v)
		=
		(C_r\alpha)_{ab}Q(u)+R_{\alpha,v'}(u),
		\qquad
		\deg R_{\alpha,v'}<r.
		\]
		The Cauchy--Schwarz argument of \cref{lem:pair-U3}, followed by
		\(U^3\lesssim U^{r-1}\), gives
		\begin{equation}\label{eq:discrete-pair-GVN}
			|\Lambda_{\alpha,P}(f)|
			\lesssim \mathbb E_{v'}
			\left\| W_{v'}(\cdot/P)
			e\bigl((C_r\alpha)_{ab}Q+R_{\alpha,v'}\bigr)
			\right\|_{U^{r-1}_P}.
		\end{equation}
		
		Suppose that every coordinate
		\(\beta_{ab}:=(C_r\alpha)_{ab}\) belongs to
		\(\mathfrak M_1(P;H)\).  Multiplying the corresponding \(E\)
		denominators gives a common denominator \(q\leq H^E\) satisfying
		\[
		\|q\,C_r\alpha\|_{\T^E}\leq H^EP^{-r}.
		\]
		Taking \(H=M^{1/E}\), we conclude that
		\[
		\alpha\notin\mathfrak M_C(P;M)
		\quad\Longrightarrow\quad
		(C_r\alpha)_{ab}\notin\mathfrak M_1(P;M^{1/E})
		\]
		for at least one edge \(a<b\).  Hence
		\eqref{eq:scalar-pruned}--\eqref{eq:discrete-pair-GVN} give
		\begin{equation}\label{eq:vector-minor-saving}
			|\Lambda_{\alpha,P}(f)|
			\ll
			M^{-\kappa_0/E},
			\qquad\text{for}\quad
			\alpha\notin\mathfrak M_C(P;M).
		\end{equation}
		
		Since \(C_r\) is an integral non-singular matrix, each map
		\(qC_r:\T^E\to\T^E\) is a surjective torus endomorphism.  Its
		pullback preserves normalized Haar measure, and consequently
		\begin{equation}\label{eq:vector-major-measure}
			|\mathfrak M_C(P;M)|
			\lesssim
			\sum_{q\leq M}(MP^{-r})^E
			\lesssim
			M^{E+1}P^{-rE}.
		\end{equation}
		Thus, on the dyadic shell
		\[
		\mathfrak M_C(P;2M)\setminus\mathfrak M_C(P;M),
		\]
		we obtain
		\begin{equation}
			P^{rE}
			\int_{\mathfrak M_C(P;2M)\setminus\mathfrak M_C(P;M)}
			|\Lambda_{\alpha,P}(f)|\,d\alpha
			\lesssim
			M^{E+1-\kappa_0/E} =  M^{-\gamma}.
			\label{eq:dyadic-shell}
		\end{equation}
		
		Finally, simultaneous Dirichlet approximation gives
		\[
		\mathfrak M_C(P;M_*)=\T^E
		\qquad\text{for}\qquad
		M_*\asymp P^{rE/(E+1)}.
		\]
		Indeed, taking \(H=P^{r/(E+1)}\), one obtains
		\[
		q\leq H^E=M_*,\qquad\|qC_r\alpha\|_\infty
		\leq H^{-1}=M_*P^{-r}.
		\]
		Summing \eqref{eq:dyadic-shell} over
		\(M=M_0,2M_0,\ldots,M_*\) proves
		\eqref{eq:integrated-minor}.
	\end{proof}
	
	We can now estimate the high-frequency error term $\cE_{q,L}^{\eps,\Z}(f)$.
	
	\begin{prop}
		\label{prop:disc-minor}
		Assume \eqref{eq:lattice-dimension}.  For every $\eps>0$, the error term
		in \eqref{eq:discrete-decomp} satisfies
		\begin{equation}\label{eq:disc-minor}
			|\cE_{q,L}^{\eps,\Z}(f)|\leq C\eps^c
		\end{equation}
		uniformly for $|f|\leq1$, provided $q=q(\eps)$, $q\mid L$, and
		$P_0(\eps)\leq P\ll N$, where $P_0(\eps)\leq\eps^{-C}$ and
		$C=C_{n,m}>0$.
		Moreover, the common modulus $q=q(\eps)$ and the lower scale threshold
		$P_0(\eps)$ may be chosen so that
		\begin{equation}\label{eq:minor-effective-constants}
			\log q(\eps)+\log P_0(\eps)\leq C\eps^{-C}.
		\end{equation}
	\end{prop}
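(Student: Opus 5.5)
Write $e^\eps_{q,L}=\sigma^{\Z}_L-\omega^\eps_{q,L}$ on the Fourier side. By \eqref{eq:circle-representation} and \eqref{5.6},
\[
\cE_{q,L}^{\eps,\Z}(f)=P^{rE}\int_{\T^E}\Big(1-\sum_{a\in[q]^E}\Psi_{P,\eps}(\al-a/q)\Big)e(-L\al\cdot\ell)\,\La_{\al,P}(f)\,d\al ,
\]
with $W=\chi$ in $\La_{\al,P}$. We split $\T^E$ into two regions: the enlarged major arcs $\mathfrak M'_{\eps,P}$ (neighbourhoods of side $\asymp\eps^{-1/2}P^{-r}$ of the points $a/q$, $q=q(\eps)$) and their complement. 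On the complement, the multiplier $1-\sum_a\Psi_{P,\eps}$ is $O(1)$, since $\widehat\psi$ is Schwartz and the translates $\Psi_{P,\eps}(\cdot-a/q)$ are essentially disjoint once $\eps^{-1}P^{-r}\ll q^{-1}$. That last condition holds when $P\geq P_0(\eps)$, since $q(\eps)=e^{O(\eps^{-1/2})}$.

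\textbf{Complement of $\mathfrak M'_{\eps,P}$.} We first check the inclusion $\mathfrak M_C(P;M_0)\subseteq\mathfrak M'_{\eps,P}$ for $M_0=c\,\eps^{-1/2}$. If $\|q_1C_r\al\|\leq M_0P^{-r}$ with $q_1\leq M_0$, then, since $C_r$ is an integral non-singular matrix with eigenvalues $2^r-2,2^r+m-4,2^r+2m-4$ (from \cref{lem:pair-transform}), $\al$ lies within $O(M_0P^{-r})$ of a rational point with denominator dividing $q_1\det C_r$. Enlarging the lcm defining $q(\eps)$ by the fixed factor $\det C_r$ (harmless, since it changes only constants in \eqref{eq:minor-effective-constants}), this denominator divides $q(\eps)$. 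The inclusion then holds provided the boxes in $\mathfrak M'_{\eps,P}$ have side $\gg\eps^{-1/2}P^{-r}$ with a large enough constant, which fixes $c$. With the inclusion in hand, \cref{lem:pruned-lattice}, which applies because \eqref{eq:lattice-dimension} gives $\kappa>E(E+1)$, bounds the contribution of the complement by $O(M_0^{-\gamma})=O(\eps^{\gamma/2})$.

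\textbf{Inside $\mathfrak M'_{\eps,P}$.} Near $a/q$ write $\al=a/q+\be$ with $|\be|\lesssim\eps^{-1/2}P^{-r}$. There the multiplier equals $1-\widehat\psi(\eps P^r\be)$ up to a rapidly decaying tail from the other translates, so it is $\lesssim\min(1,\eps^2|P^r\be|^2)$ by \eqref{eq:rho-tail}. Put $t=P^r\be$. We need a bound for $\La_{a/q+\be,P}(f)$ uniform in $a$ with decay in $|t|$; this is the step I expect to be the main obstacle. The plan is to split $v$ into residue classes modulo $q$. Writing $v=qy+\rho$, the phase becomes $e(a\cdot\Phi(\rho)/q)\,e(\be\cdot\Phi(qy+\rho))$, because $q\mid\Phi(qy+\rho)-\Phi(\rho)$. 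This is legitimate because $P\geq P_0(\eps)\geq q^{C}$, so each class still contains a box of side $P/q\to\infty$.

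We then run the pair isolation of \cref{lem:pair-transform} and the Cauchy--Schwarz bound of \cref{lem:pair-U3} in the $y$-variable. In the isolated edge, the phase in $y$ is $q^r\be(C_r\cdot)_{ab}Q(y)$ plus lower-degree terms, evaluated on a box of side $P/q$. Thus $\al'=q^r\be_{ab}$ satisfies $(P/q)^r|\al'|=|t_{ab}|$, and \eqref{eq:minor-Gowers-small-alpha} gives decay $(1+|t|)^{-\kappa}$ in the range $|t|\leq(P/q)^{r/2}$. That range covers $|t|\lesssim\eps^{-1/2}$ once $P_0(\eps)$ is a large power of $q(\eps)\eps^{-1}$.

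Summing over the $q^E$ values of $a$, the measure of each box ($P^{-rE}\,dt$) and the factor $P^{rE}$ cancel. The constraint on the residue classes, $q\mid\Phi$, carries the weight $q^{-E}$ and cancels the $q^E$. We are left with
\[
\int_{\R^E}\min(1,\eps^2|t|^2)(1+|t|)^{-\kappa}\,dt\lesssim\eps^{c}
\]
for $\kappa>E$, exactly as in \eqref{eq:high-error}.

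\textbf{Constants.} The requirements are $q(\eps)=e^{O(\eps^{-1/2})}$ and $P_0(\eps)=q(\eps)^{C}\eps^{-C}$. These give \eqref{eq:minor-effective-constants}, and the final bound is $\eps^{\min(\gamma/2,c)}$. If the residue-class step fails to be uniform in $a$, the fallback is to absorb the enlarged arcs into $\mathfrak M_C(P;M)$ with $M\asymp q(\eps)\eps^{-1/2}$. This costs only a worse, though still effective, dependence of $P_0$ on $\eps$, and it still yields a power of $\eps$ after adjusting $q$ to be a fixed power-smooth modulus.
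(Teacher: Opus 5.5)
Your main line is correct, but it treats the major arcs differently from the paper.

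\textbf{The paper's route.} The paper takes $M_0$ a large power of $\eps^{-1}$ and disposes of $\T^E\setminus\mathfrak M_C(P;M_0)$ by \cref{lem:pruned-lattice}. It then enlarges $q(\eps)$ by $|\det C_r|$ and all denominators up to $M_0$, saying only that the vector major arcs are ``absorbed into the major term''. It does not explain why the error multiplier near $a/q$ does no harm. That multiplier is $1-\widehat\psi(\eps P^r\be)$, which is $\approx1$ once $|P^r\be|\gg\eps^{-1}$.

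\textbf{Your route.} You take $M_0=c\,\eps^{-1/2}$, so only $\det C_r$ has to be added to $q(\eps)$ and $\log q(\eps)\ll\eps^{-1/2}$. You use \cref{lem:pruned-lattice} only off $\mathfrak M'_{\eps,P}$. On $\mathfrak M'_{\eps,P}$ you run the lattice analogue of the Euclidean estimate \eqref{eq:high-error}: residue classes mod $q$, pair isolation at scale $P/q$, and Birch's small-$\al$ bound \eqref{eq:minor-Gowers-small-alpha}. This gives decay $(1+|t|)^{-\kappa}$ to integrate against $\min(1,\eps^2|t|^2)$, making explicit what the paper leaves implicit, at the cost of an arithmetic reduction.

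The paper's route can also be completed without residue classes. Suppose $q(\eps)$ is divisible by $\det C_r$ times every integer up to $2M_0$. Then on each shell $\mathfrak M_C(P;2M)\setminus\mathfrak M_C(P;M)$ with $M\le M_0$ the multiplier is $\lesssim\min(1,\eps^2M^2)$. Together with \eqref{eq:vector-minor-saving} and \eqref{eq:vector-major-measure}, this bounds the total by $\sum_M\min(1,\eps^2M^2)M^{-\gamma}+M_0^{-\gamma}$, which is a power of $\eps$.

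\textbf{Points to fix in your major-arc step.}
\begin{enumerate}
\item A bound on $|\La_{a/q+\be,P}(f)|$ uniform in $a$ is not enough. Summing it over $a\in[q]^E$ loses a factor $q^E$ that nothing cancels. The cancellation you describe requires summing over $a$ \emph{before} taking absolute values. This is legitimate for three reasons:
\begin{itemize}
\item the multiplier $1-\sum_{b\in\Z^E}\widehat\psi(\eps P^r(\al-b/q))$ is exactly $q^{-1}\Z^E$-periodic;
\item $\mathfrak M'_{\eps,P}$ is invariant under $q^{-1}\Z^E$;
\item $e(-La\cdot\ell/q)=1$ because $q\mid L$.
\end{itemize}
So the $a$-sum collapses to the weight $q^E\1_{\{q\mid\Phi(v)\}}$. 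Only then do you split $v$ over the classes in $\mathcal S_q$ and apply the Gowers bound class by class. With this, no ``fallback'' is needed.
\item The outcome carries the factor $\be_q(0)$, so you also need $\sup_q\be_q(0)<\infty$. This follows from absolute convergence of the singular series under \eqref{eq:birch-threshold}. It is not supplied by \cref{lem:Q-regularity}, which gives only the lower bound.
\item Like the paper's argument, yours gives only the logarithmic bound \eqref{eq:minor-effective-constants} on $P_0(\eps)$. The clause $P_0(\eps)\le\eps^{-C}$ in the statement is out of reach in any case. The translates in $\Psi_{P,\eps}$ must separate, which forces $P^r\gg q(\eps)/\eps$.
\end{enumerate}
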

	
	\begin{proof}
		Choose $M_0$ to be a sufficiently large fixed power of $\eps^{-1}$.
		By \cref{lem:pruned-lattice}, the contribution outside
		$\mathfrak M_C(P;M_0)$ is $O(\eps^c)$.  Enlarge the common
		denominator $q(\eps)$ by $|\det C_r|$ and by the denominators up to $M_0$,
		so that these vector major arcs are absorbed into the major term of
		\eqref{eq:discrete-decomp}.  Since $P^{rE-D}\sum_v$ in
		\eqref{eq:circle-representation} equals
		$P^{rE}(P^{-D}\sum_v)$, \eqref{eq:integrated-minor} gives
		\eqref{eq:disc-minor} once $P\geq P_0(\eps)$.  The construction of
		$M_0$, $q(\eps)$, and the finitely many small-scale cutoffs gives
		\eqref{eq:minor-effective-constants}.
	\end{proof}
	
	\medskip
	Next, we deal with the sum of the intermediate terms. The key tool we use is Villarroya's transference argument for multi-linear multiplier operators.
	
	\begin{prop}\label{prop:disc-middle}
		For a lacunary sequence of levels $L_j$ and physical scales $P_j=L_j^{1/r}$, $1\leq j\leq J$,
		\begin{equation}\label{eq:disc-middle}
			\sum_j|\cK_{q,L_j}^{\eps,\Z}(f)|^2
			\lesssim_{m,Q,\De} \eps^{-C}q^{D+2E},
		\end{equation}
		with a constant $C=C_{n,m}$, uniformly in the number of levels $J$ and in $|f|\leq1$.
	\end{prop}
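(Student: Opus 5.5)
The plan is to reduce \eqref{eq:disc-middle} to the continuous square-function estimate of \cref{prop:middle}. The arithmetic factor $\chi_q$ is handled by splitting into residue classes modulo $q$, and the passage from $\R^n$ to $\Z^n$ is made with Villarroya's transference theorem \cite{Villarroya11}.

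\textbf{Step 1: remove $\chi_q$.} By \eqref{eq:discrete-coarse-middle} we have $k_{q,L_j}^\eps=\chi_q k_{P_j}^\eps$. The factor $\chi_q(v)=q^E\1_{\{\Phi(v)\equiv0\ (q)\}}$ depends only on $v$ modulo $q$. Write $v=qw+b$ with $b$ ranging over $H_{m,\Z}^n/qH_{m,\Z}^n$; there are $q^D$ such classes, and only those with $\Phi(b)\equiv0\pmod q$ contribute. Similarly split $z=qy+c$ with $c\in(\Z/q\Z)^n$. Then
\[
\cK_{q,L_j}^{\eps,\Z}(f)=q^E\sum_{b}\sum_c\Lambda^{(j)}_{b,c},
\]
where each $\Lambda^{(j)}_{b,c}$ is an $(m+1)$-linear form in the functions $f_c(y)=f(qy+c)$ and $f_{c+b_i}(y)=f(qy+c+b_i)$, with kernel $w\mapsto k_{P_j}^\eps(qw+b)$. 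This kernel is the discrete restriction of the smooth, mean-zero dilate $(P_j/q)^{-D}k_1^\eps\bigl((w+b/q)/(P_j/q)\bigr)$ up to the normalisation $q^{-D}$. Taking the sum over $b$ and $c$ outside by Cauchy--Schwarz costs a factor $q^{D}\cdot q^{n}$ (the number of classes). The $q^n$ factor is absorbed because $\sum_c\|f_c\|^2$ recovers the total mass $\sim N^n/q^n$ after normalisation. Together with $(q^E)^2$ this should produce the claimed power $q^{D+2E}$.

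\textbf{Step 2: discrete square-function bound for a fixed class.} For fixed $b,c$ we must show
\[
\sum_j|\Lambda^{(j)}_{b,c}|^2\lesssim\eps^{-C}(N/q)^{2n}
\]
uniformly in $J$. Following \cref{prop:middle}, first apply Cauchy--Schwarz in $y$ to remove $f_c$. This gives a $2m$-linear form on $\Z^n$ with kernel $\sum_j k^{(j)}(w)\overline{k^{(j)}(w')}$. After Poisson summation in $w$, its multiplier on $\T^{2mn}$ is the periodisation of the continuous symbol $m_J$, twisted by the bounded phase $e(\xi\cdot b/q)$. Villarroya's theorem \cite{Villarroya11} transfers $L^{p_1}\times\cdots\times L^{p_{2m}}$ bounds for continuous multilinear multipliers to the restriction of their periodisations, with the same constant, provided the continuous multiplier is bounded. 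The continuous operator with symbol $m_J$ is bounded uniformly in $J$ by the singular-integral estimate \eqref{eq:MTT-symbol} and \cite{MTT02,DPT10}, since the singular set $\Gamma'$ is unchanged. The discrete lacunary scales $P_j/q$ satisfy the same symbol bounds. The constant in \eqref{eq:MTT-symbol} depends on finitely many derivatives of $\widehat{k_1^\eps}$, which are $O(\eps^{-C})$ because $k_1^\eps$ has $\eps$-scale structure in the $\psi$ factor. This accounts for the factor $\eps^{-C}$.

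\textbf{Main obstacle.} The delicate point is making the transference legitimate. Villarroya's theorem requires the continuous multiplier to be continuous at the lattice points used in the restriction. Here $m_J$ is singular along $\Gamma'$, which passes through $0$, so the restriction is to be understood via the mean-zero property $\widehat{k_1^\eps}(0)=0$. I would handle this by truncating each term smoothly near $\Gamma'$ at scale $P_J^{-1}$, or by checking that $m_J$, being a finite sum of smooth functions, is continuous everywhere, so the theorem applies for each fixed $J$ with constant independent of $J$. A second issue is ensuring the Poisson-summation aliasing terms (the periodisation over $\Z^{2mn}$) are controlled. Since $P_j/q\geq C$ and $\widehat{k_1^\eps}$ is Schwartz, the terms away from the origin decay rapidly and contribute at most $O(\eps^{-C})$ summed over $j$. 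Finally, summing over the $q^{D}$ classes $b$ and the $q^n$ classes $c$ as in Step 1, with the normalisation $N^{-n}$ in \eqref{eq:discrete-count}, yields \eqref{eq:disc-middle}.
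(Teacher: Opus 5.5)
Your skeleton matches the paper's. You remove $\chi_q$ at a polynomial cost in $q$, apply Cauchy--Schwarz in the base point to reach the $2m$-linear form with kernel $K_J^\eps$, use Poisson summation, transfer the continuous MTT bound via Villarroya, and control far aliases by Schwartz decay. Your Step~1 differs from the paper, which expands $\chi_q$ in a finite Fourier series; you instead split $v$ and $z$ into residue classes and rescale to the lattice at scale $P_j/q$. This is a legitimate substitute. If you restrict the Cauchy--Schwarz step to the $|\mathcal S_q|=\beta_q(0)q^{D-E}$ null classes $b$ and track the factor $q^{-D}$ in the rescaled kernel, the powers of $q$ essentially cancel: you get $\eps^{-C}\beta_q(0)^2\leq\eps^{-C}q^{2E}$, so $q^{D+2E}$ certainly follows.

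The step that fails as written is the aliasing argument, where you treat everything except $h=0$ as ``terms away from the origin'' that decay rapidly.

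\begin{itemize}
\item \emph{What the transference needs.} In the paper, the transference is applied only after inserting smooth cutoffs to a fundamental cube in every \emph{input} frequency $\xi_i,\xi_i'$. A bounded continuous multiplier need not have a bounded periodization. The continuity of $m_J$ for fixed $J$, which you correctly note, is not where the difficulty lies.

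\item \emph{Why bounded aliases are not small.} With the inputs localized this way, the kernel frequency $L(\xi,\xi')=\bigl((\xi_i-\xi_m)_{i<m},(\xi'_i-\xi'_m)_{i<m}\bigr)$ ranges over a region of side about $2$. So in the Poisson sum $\sum_{h\in\Z^{2D}}m_J\bigl(L(\xi,\xi')+h\bigr)$, the terms with $h\neq0$ and $|h|_\infty\leq1$ are not small. They are singular exactly where $L(\xi,\xi')=-h$, which is the torus singular set $\{\xi_1\equiv\cdots\equiv\xi_m,\ \xi'_1\equiv\cdots\equiv\xi'_m\}$ seen from a different fundamental domain. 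Decay of $\widehat{k_1^\eps}$ gives nothing there, and summing these terms over $j$ requires the full MTT estimate.

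\item \emph{The missing idea (the paper's modulation step).} Each bounded alias is a translate of the continuous multiplier. Since $L$ maps $\Gamma$ onto $(H_m^n)^*\times(H_m^n)^*$, the translation by $h$ can be realized by modulating the $2m$ input functions, which leaves their $L^{2m}$ norms unchanged. Hence every bounded $h$ is handled by MTT plus transference exactly as $h=0$ is.

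\item \emph{Large $|h|$.} Only here does decay take over. You then need derivative bounds on the cut-off multiplier $m_{J,h}$, giving $\|\check m_{J,h}\|_{L^1}\lesssim(1+|h|)^{-M}$ uniformly in $J$. Pointwise smallness alone is not enough to bound the multilinear forms and sum over $h$.
\end{itemize}
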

	
	\begin{proof}
		Expanding $\chi_q$ in its finite Fourier series on
		$H_{m,\Z}^n/qH_{m,\Z}^n$, each mode merely modulates the external
		functions. Thus it suffices to prove the estimate without $\chi_q$, accepting an extra factor $q^{D+2E}$ via a simple application of Cauchy-Schwarz.
		
		After the same Cauchy--Schwarz step as in \cref{prop:middle} as in the Euclidean case, the sampled
		kernel is the restriction to the lattice of $K_J^\eps$ in
		\eqref{eq:KJ}.  Poisson summation writes its torus multiplier as a sum over
		\[
		h\in (H_{m,\Z}^n)^*\times(H_{m,\Z}^n)^*\simeq\Z^{2D}
		\]
		of the Euclidean multiplier.  Insert fixed smooth cutoffs in every input frequency on a fundamental cube.  The map $L$ in
		\eqref{eq:kernel-frequency-map} is surjective onto the two kernel-frequency spaces, so every fixed $h$ translation can be absorbed into modulations of the external input functions.
		
		For the finitely many bounded vectors $h$, view the scalar $2m$-linear form as
		a $(2m-1)$-linear operator
		\[
		\prod_{i=1}^{2m-1}L^{2m}\longrightarrow
		L^{2m/(2m-1)}.
		\]
		The continuous integral-rank MTT estimate \cite{MTT02} then transfers to the corresponding sequence spaces by Villarroya's multi-linear transference
		theorem \cite[Theorem~3.3]{Villarroya11}.  Note that the proof there is written in one dimension for notational simplicity, but the paper explicitly notes, and it is easy to see from its proof that it
		is unchanged for multi-linear operators on $\R^n$ and $\Z^n$.
		
		For a large $h$, the cutoff multiplier $m_{J,h}$ obeys, for every
		fixed derivative order and every $M$,
		\begin{equation}\label{eq:alias-decay}
			\norm{\partial^\beta m_{J,h}}_\infty
			\leq C_{\beta,M,\eps}(1+|h|)^{-M},
		\end{equation}
		uniformly in $J$.  Sufficiently many derivatives give
		\begin{equation}\label{eq:alias-L1}
			\norm{\check m_{J,h}}_{L^1}
			\leq C_{M,\eps}(1+|h|)^{-M}.
		\end{equation}
		The physical-space kernel here lives on the full product of input
		variables.  The resulting multi-linear norm has the same decay and is summable over $h$.  This proves
		\eqref{eq:disc-middle}.
	\end{proof}
	
	We can now give the proof of \cref{thm:lattice-large scales}.
	
	\begin{proof}[Proof of \cref{thm:lattice-large scales}]
		Let $c(\de)$ be the $q$-independent constant in
		\eqref{eq:disc-main}, and let
		\[
		c_0:=\inf_{0<\eps\leq\eps_0}c_\eps>0
		\]
		for a sufficiently small fixed $\eps_0$.  Choose
		$\eps=\eps(\delta)\leq\eps_0$ so that the bound in
		\eqref{eq:disc-minor} is at most $c_0c(\de)/2$, and then take the
		common denominator $q=q(\eps)$ in \eqref{eq:discrete-decomp}.  The uniform local conditions
		\eqref{eq:uniform-local} and \eqref{eq:uniform-zero} control,
		respectively, the exact arithmetic normalization and the lower
		bound for the main term uniformly over the indicated levels and moduli.
		
		Suppose there are arbitrarily large admissible levels, divisible by $q$,
		for which no configuration exists.  Select a finite lacunary sequence
		$L_1<\cdots<L_J$ of such levels, write $P_j=L_j^{1/r}$, and require
		\[
		P_1\geq\max\{Cq(\eps),P_0(\eps)\}.
		\]
		Here $C$ is the constant in \cref{prop:disc-main}. Choose a box of side
		$N\geq\eta^{-1}P_J$ on which $A$ has density comparable to
		$\delta$.  Such a box may be chosen arbitrarily large by the definition
		of upper Banach density.  Translate the box to $[1,N]^n$ and let $f$ be
		the indicator of the corresponding translate of $A$ inside that box.
		The exact count vanishes at every $L_j$.  By
		\cref{prop:disc-main,prop:disc-minor} and
		\eqref{eq:discrete-forms-decomp},
		\[
		|\cK_{q,L_j}^{\eps,\Z}(f)|
		\geq c_\eps c(\de)-|\cE_{q,L_j}^{\eps,\Z}(f)|
		\geq \frac12c_0c(\de)=:c'(\delta)>0.
		\quad(1\leq j\leq J).
		\]
		Thus the left side of \eqref{eq:disc-middle} is at least
		$J(c'(\delta))^2$, contradicting \cref{prop:disc-middle} for large $J$.
		All sufficiently large admissible levels divisible by $q$ therefore occur.
	\end{proof}

	
	\section{A quantitative finitary variant}
	\label{sec:quantitative-lattice}
	
	We prove a quantitative result about the existence of barycentric configuration in a finite set $A\subs [N]^n$ of density $\de$, which is $Q$-similar to a given simplex $\De_o$. The proof will combine our basic decomposition with a  density increment method. Doing this over a family of scales is technically involved, mainly due to the inverse results for Roth type affine configurations where one does not have good control of the scales at which the density of the underlying set increases. We handle this by assigning a large family scales to each density, which remains closed under the induction on the densities. 
	
	\begin{defn}[$Q$-similar simplex--center copy]
		\label{def:similar-copy}
		Let $L\in\N$.  A collection
		\[
		z,\quad z+v_1,\ldots,z+v_m\in\Z^n
		\]
		is a \emph{$Q$-similar copy of $\De_o$ together with its center, at level
			$L$}, if
		\begin{equation}\label{eq:quant-similar-copy}
			\sum_{i=1}^m v_i=0,
			\qquad
			Q(v_i-v_j)=L\ell_{ij}\quad(i<j).
		\end{equation}
		The corresponding physical scale is $P=L^{1/r}$.  The terminology refers to the edge
		equations in \eqref{eq:quant-similar-copy}; for a general form $Q$ it does not assert that a linear map carries $\De_o$ to
		$(v_1,\ldots,v_m)$.
	\end{defn}

	\begin{thm}\label{thm:quantitative-lattice}
		Assume the hypotheses of \cref{thm:lattice-large scales}.  There is a
		constant $C=C(m,n,Q,\De)$ such that, for every $0<\delta\leq1$ and every scale $R_0\geq1$, there are a finite set of levels
		$\mathcal T_{\delta,R_0}\subset\N$ and an integer
		$N_*(\delta,R_0)$ satisfying
		\begin{equation}\label{eq:quant-N-bound}
			N_*(\delta,R_0)
			\leq
			\exp\!\left(\exp(C\delta^{-C})(1+\log R_0)\right)
			=(eR_0)^{\exp(C\delta^{-C})},
		\end{equation}
		with the following property.  If $N\geq N_*(\delta,R_0)$ and
		\[
		A\subset[1,N]^n\cap\Z^n,
		\qquad |A|\geq\delta N^n,
		\]
		then $A$ contains a $Q$-similar copy of $\De_o$ together with its center at
		some level $L\in\mathcal T_{\delta,R_0}$.  Moreover,
		\begin{equation}\label{eq:quant-level-range}
			R_0\leq L^{1/r}\leq N_*(\delta,R_0)
			\qquad(L\in\mathcal T_{\delta,R_0}).
		\end{equation}
	\end{thm}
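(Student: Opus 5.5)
The proof will run the decomposition \eqref{eq:discrete-forms-decomp} inside a density increment iteration. It will not use a pigeonhole over many scales, since Proposition \ref{prop:disc-main} has no quantitative dependence on $\de$. Instead the main term will be made quantitative for the \emph{pseudorandom} part of $A$, and any failure will be converted into a density increment on a sub-progression.

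Fix $\de$ and set $\eps=\eps(\de)=\de^{C}$. Let $q=q(\eps)$ be the modulus of Proposition \ref{prop:disc-minor}, so that $\log q\le C\eps^{-C}$ by \eqref{eq:minor-effective-constants}. Write $\1_A=\de\1_{[N]^n}+g$. For levels $L$ with $q\mid L$ and $P=L^{1/r}\ge P_0(\eps)$, I will control the three parts of $\cN^\Z_L(\1_A)$ separately.
\begin{itemize}
\item For the constant function, the main term $c_\eps\cM^\Z_{q,L}(\de\1)$ is of order $\de^{m+1}$. This follows from counting small null frames as in \eqref{eq:small-frame-count} together with \eqref{eq:uniform-zero}; Szemer\'edi's theorem is not needed here.
\item The high-frequency term is $O(\eps^c)$, uniformly in the functions, by Proposition \ref{prop:disc-minor}.
\item The cross terms, which involve at least one copy of $g$, are handled by expanding $\chi_q$ in Fourier series as in Proposition \ref{prop:disc-middle}. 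They are bounded via Lemma \ref{lem:pair-U3}-type Cauchy--Schwarz applied to the smoothed kernels $\om_P$ and $k^\eps_P$. The result is a bound by $q^{C}\eps^{-C}$ times a local $U^{3}$ norm (or $U^{m}$ norm) of $g$ on boxes of size $P$, restricted to a residue class modulo $q$.
\end{itemize}

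There are therefore two outcomes at a level $L$. Either $\cN^\Z_L(\1_A)\gtrsim\de^{m+1}$, which produces the desired copy, or $g$ has large Gowers norm at scale $P$ along a progression of step dividing $q$. In the second case the quantitative inverse theorem (polynomial bounds of the Green--Tao/Manners type, or the results cited for Lemma \ref{lem:Varnavides}) gives a density increment. Concretely, $A$ has density at least $\de+c\de^{C}$ on a subgrid $x+q'\Z^n$ of size $N'\ge N^{c(\de)}$, with $q'$ bounded by $\exp(\de^{-C})$.

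Next comes the iteration. Passing to $A'=\{y:x+q'y\in A\}$ rescales the form: a configuration at level $L'$ for $A'$ gives one at level $q'^rL'$ for $A$, so the edge equations \eqref{eq:quant-similar-copy} are preserved. The density increases at most $O(\de^{-C})$ times. I will define $\mathcal T_{\de,R_0}$ recursively as the set of levels $L$ with $q(\eps)\mid L$, $L^{1/r}\in[R_0',R_1]$, together with the rescaled sets $q'^r\mathcal T_{\de',R_0'}$ for the larger densities $\de'$. This is the "family of scales closed under induction" mentioned in the text. Each step costs a power loss $N\mapsto N^{c(\de)}$ and a multiplication of the scale lower bound by $\exp(\de^{-C})$, which yields $N_*\le(eR_0)^{\exp(C\de^{-C})}$. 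The lower bound $R_0\le L^{1/r}$ is preserved because every scale is multiplied by $q'\ge1$.

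The step I expect to be the main obstacle is making the error terms uniform at a single, \emph{a priori} unknown scale. The problem is that the increment from the inverse theorem lives on boxes whose size is not tied to $P$. I will try to choose $P\asymp N^{c}$ directly, so that the intermediate-term kernel $k^\eps_P$ is absorbed into the Gowers norm estimate rather than into the energy-increment bound of Proposition \ref{prop:disc-middle}. If that fails, I will pigeonhole over $O(\eps^{-C}q^{C})$ lacunary scales using \eqref{eq:disc-middle} to find one scale where $|\cK|\le\eps$. This costs only a factor $\exp(\de^{-C})$ in the exponent, which is consistent with \eqref{eq:quant-N-bound}.
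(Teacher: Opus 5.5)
Your cross-term bound carries a factor $q^{C}$, coming from Fourier-expanding $\chi_q$ as in Proposition~\ref{prop:disc-middle}. This is fatal for \eqref{eq:quant-N-bound}. The modulus $q=q(\eps)$ is a least common multiple of all denominators up to a power of $\eps^{-1}$, so one only has $\log q\le C\eps^{-C}$, as in \eqref{eq:minor-effective-constants}. With $\eps=\de^{C}$, the factor $q^{C}$ is of size $\exp(C\de^{-C})$. Your dichotomy then gives a Gowers-norm threshold $\eta\approx\de^{m+1}\eps^{C}q^{-C}=\exp(-C\de^{-C})$ and density increments of size $\eta^{C}$, hence $\exp(C\de^{-C})$ iterations. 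Each iteration shrinks the box from $N$ to about $N^{c}$, so you get $\log\log N_*\gtrsim\exp(C\de^{-C})$, a triple exponential. The increment $\de+c\de^{C}$ and the $O(\de^{-C})$ steps you assert do not follow from the bound you wrote. Your fallback of pigeonholing $\cK$ over $\eps^{-C}q^{C}$ lacunary scales does not help either, because the $q$-loss sits in the cross terms of the main term.

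The missing idea is the residue-class normalization the paper uses for $\cD_j$. Write $v=u+qw$ with $u\in\mathcal S_q$, and $z=a+qy$. The height $q^E$ of $\chi_q$ cancels exactly against the density $q^{-E}$ of null residue frames. Each term then becomes $\beta_q(0)\,\mathbb E_{u\in\mathcal S_q}\mathbb E_{a\bmod q}\Lambda_{u,a}$, as in \eqref{eq:residue-normalized-error}, where $\Lambda_{u,a}$ is an unweighted affine form on the $q$-grid at scale $P/q$. Shifting $a$ only permutes residue classes, so the generalized von Neumann inequality bounds these terms by the residue-averaged norm $\mathcal U_{P/q,q}(g)$ of \eqref{eq:averaged-local-U2}, with no power of $q$. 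In the paper, $q^{D+2E}$ is paid only in \eqref{eq:quant-middle-q}. There it inflates the number $J_\rho$ of scales, and so it enters $N_*$ only through $\log B_\rho\le\exp(C\rho^{-C})$.

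Relatedly, the norm to use is $U^2$, not $U^3$ or $U^m$: the pattern $z,z+v_1,\dots,z+v_m$ with $\sum_i v_i=0$ has complexity one. Lemma~\ref{lem:pair-U3} places the $U^3$ norm on the oscillatory weight $e(tQ)$, not on $g$. Also, $U^3$ inverse theorems of Green--Tao or Manners type do not give polynomial density increments on subgrids, so relying on them would break the bound as well. With $U^2$, the increment step is the Fourier argument of Lemma~\ref{lem:local-U2-increment}. Note that it produces steps $qs$ with $s$ up to $(P/q)^{1-\theta}$, not $q'\le\exp(\de^{-C})$, but this is harmless.

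Once both points are repaired, your scheme does appear to give a genuinely different route. Since $\de\1$ is the same on every residue class, the full major-arc kernel $\chi_q\omega^\eps_P$ has constant part $\approx c_\eps\beta_q(0)\de^{m+1}$, with its $k^\eps$-part of mean zero. This holds up to Poisson and boundary errors once $q\eps^{-C}\le P\le c\de^{C}N$. All remaining terms are $O(\eps^{-C}\mathcal U_{P/q,q}(g))$ at a single scale. This would avoid the energy bound and the tree, with scales kept below $N_*$ by first passing to a subcube of side comparable to $N_*$. The paper instead compares with the asymmetric smoothing $F_{H,q}$, which varies from one residue class to another. It therefore handles $\cK_j$ by the energy estimate over a block of $J_\rho$ lacunary scales and organizes the blocks into a predetermined tree.
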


	\subsection{A quantitative block dichotomy}
	
	Let us first record the three local inputs.  For functions
	$f_0,\ldots,f_m$ we use the multi-linear versions of the forms introduced
	in Section~\ref{sec:lattice-proof}, replacing the ``diagonal" product by
	$f_0(z)\prod_{i=1}^m f_i(z+v_i)$.
	
	Fix throughout
	\begin{equation}\label{eq:theta-choice}
		\theta:=\frac1{n+1}.
	\end{equation}
	Choose constants $c_*,C_*>0$ once and for all, with $c_*$ sufficiently
	small and $C_*$ sufficiently large, and set
	\begin{equation}\label{eq:kappa-choice}
		\kappa_\rho:=c_*\rho^{C_*}
		\qquad(0<\rho\leq1).
	\end{equation}
	These constants are set to be below the density increments obtained from \cref{lem:local-U2-increment}.
	
	To make the conditioning cells precise, put $M_H:=\lfloor H/q\rfloor$ and,
	for every $a\in(\Z/q\Z)^n$, partition $a+q\Z^n$ into scalar grids
	\[
	C=a+q\bigl(b+[1,M_H]^n\bigr),\qquad b\in M_H\Z^n.
	\]
	Let $\mathcal C_{H,q}$ be the cells contained in $[1,N]^n$ and discard the
	boundary cells, whose union has size $O(HN^{n-1})$.  Write
	\[
	d_A(C):=\frac{|A\cap C|}{|C|},
	\qquad
	h_{H,q}:=\sum_{C\in\mathcal C_{H,q}}d_A(C)\1_C.
	\]
	If $\varphi$ is a fixed nonnegative Schwartz function with
	$\int\varphi=1$ and $\widehat\varphi$ supported in a sufficiently small cube, let
	\begin{equation}\label{eq:grid-smoothing}
		\varphi_{H,q}(x):=q^nH^{-n}\1_{q\Z^n}(x)\varphi(x/H),
		\qquad F_{H,q}:=\1_A*\varphi_{H,q}.
	\end{equation}
	If $T=H/q\geq2$, then
	\[
	\psi_T(y):=\varphi_{H,q}(qy)=T^{-n}\varphi(y/T),
	\qquad \sum_{y\in\Z^n}\psi_T(y)=1,
	\]
	the last identity following from Poisson summation and the Fourier support of
	$\varphi$.  Thus convolution by $\psi_T$ is a positive mean-preserving
	operator on each normalized residue-class grid.  If $f=\1_A$ and
	$f_a(y)=f(a+qy)$, then
	\[
	F_{H,q}(a+qy)=f_a*\psi_T(y),
	\qquad 0\leq F_{H,q}\leq1.
	\]
	Choosing $\varphi$ as a positive majorant and absorbing its rapidly
	decaying tails, one has $F_{H,q}\gtrsim h_{H,q}$ away from the boundary.
	For later use, put
	\[
	\mathcal S_q:=\{u\in H_{m,\Z}^n/qH_{m,\Z}^n:
	\Phi(u)\equiv0\pmod q\}.
	\]
	
	The following lemma is similar in spirit to that of \cite[Lemma 3.1]{MSimplex09}
	\begin{lem}[Asymmetrically smoothed coarse main term]
		\label{lem:quant-coarse-main}
		Let $A\subset[1,N]^n$ have density at least $\rho$, and put
		$t_0=m+1$.  There are effective constants
		$c_\rho^*,C_\rho^*>0$ such that the following holds.
		Let $P=L^{1/r}$, let $q\mid L$, and choose
		\[
		P\geq C_\rho^*q,
		\qquad
		H=C_\rho^*P,
		\qquad H\leq c_\rho^*N.
		\]
		If $d_A(C)\leq\rho+\kappa_\rho$ for every $C\in\mathcal C_{H,q}$, then
		\begin{equation}\label{eq:asymmetric-main-lower}
			\cM_{q,L}^{\Z}
			(\1_A,F_{H,q},\ldots,F_{H,q})
			\geq c\,\beta_q(0)\rho^{t_0},
		\end{equation}
		where $c>0$ depends only on the initial configuration $\De_o$.  Moreover,
		\begin{equation}\label{eq:coarse-effective-constants}
			\log C_\rho^*
			+\log (c_\rho^*)^{-1}
			\leq\exp(C\rho^{-C}).
		\end{equation}
	\end{lem}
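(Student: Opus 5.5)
Since $\omega_{q,L}=\chi_q\,\omega_P$ by \eqref{eq:discrete-coarse-middle}, and $\omega_P(v)\geq c_0P^{-D}$ when $\max_i|v_i|_\infty\leq\theta P$ by \eqref{eq:omega-origin-lower}, I would first discard everything except this small-frame region. What remains to bound from below is
\[
N^{-n}P^{-D}q^E\sum_{z}\sum_{\substack{v:\ |v|_\infty\leq\theta P\\ \Phi(v)\equiv0\ (q)}}\1_A(z)\prod_{i=1}^mF_{H,q}(z+v_i).
\]
Next I would sort the frames $v$ by their residue class $u\in\mathcal S_q$. Since $P\geq C^*_\rho q$, each class has $\asymp(\theta P/q)^D$ lifts in the box, so it suffices to show that a typical configuration has weight $\gtrsim\rho^{m+1}$.

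The asymmetric smoothing is what makes this work. Fix $z$ and a residue class $a$. The points $z+v_i$ lie in the grid $z+u_i+q\Z^n$, and they are displaced by $O(\theta P)$, which is much smaller than $H=C^*_\rho P$. So each factor $F_{H,q}(z+v_i)$ is essentially the smoothed density of $A$ on the grid $z+u_i+q\Z^n$ at scale $H$ near $z$. Up to a factor $1+O(\theta P/H)=1+O(1/C^*_\rho)$, it is comparable to $F_{H,q}(z+u_i)$ moved to a common base point. Since $F\gtrsim h_{H,q}$ away from the boundary, this is bounded below by the cell densities $d_A(C)$ of the neighbouring cells. The product over $i$ is then bounded below by $\prod_i d_A(C_i(z))$, where $C_i(z)$ is the cell of residue $z+u_i$ containing $z$.

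The key step uses the hypothesis $d_A(C)\leq\rho+\kappa_\rho$ for every cell. Since the total density is $\geq\rho$, a cell-averaging (Markov) argument shows that all but a $O(\kappa_\rho/\rho)$-fraction of the mass, which is small because $\kappa_\rho=c_*\rho^{C_*}$, lies in cells of density $\geq\rho/2$. For each residue class $r\in(\Z/q)^n$ the proportion of cells of class $r$ with density $<\rho/2$ is therefore small on average over $r$. I would then average over $z\in A$ and over $u\in\mathcal S_q$. The classes $z+u_i$ for $u\in\mathcal S_q$ are equidistributed, and I would justify this from translation invariance of $\mathcal S_q$ by constant shifts: $\Phi(u+c\cdot\mathbf 1)$ is not an allowed move on $H_m$, so instead use $\mathcal S_q$ being stable under $u\mapsto u+w$ with $\sum w_i=0$ only via its structure. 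This equidistribution is the step I expect to be the main obstacle. To get around it I would use Cauchy--Schwarz or H\"older, bounding $\sum_{u\in\mathcal S_q}\prod_i d(z+u_i)$ by $|\mathcal S_q|\cdot(\text{mean density})^m$ minus an error controlled by $\kappa_\rho$ and the upper bound $\rho+\kappa_\rho$. The upper bound makes the densities nearly constant in $L^2$, since the variance is $\leq(\rho+\kappa_\rho)\rho-\rho^2$ after normalisation, and so the product is $\geq(\rho/2)^m$ on most of the set. The factor $\1_A(z)$ contributes $\rho$, which gives $\rho^{m+1}=\rho^{t_0}$.

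Finally, $|\mathcal S_q|=\beta_q(0)q^{D-E}$, and this combines with $q^E$ and the lift count $(P/q)^D P^{-D}$ to give $c\,\beta_q(0)\rho^{t_0}$. The constants have the required size: $C_\rho^*$ must beat $\rho^{-C}$ for the smoothing errors, and $c^*_\rho$ controls the boundary loss $O(H/N)$. Both are polynomial in $\rho^{-1}$, which is comfortably inside the bound \eqref{eq:coarse-effective-constants}.
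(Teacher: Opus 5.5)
\textbf{There is a gap in the central step.} Your outline matches the paper's: restrict to $\max_i|v_i|_\infty\le\theta P$ using $\omega_{q,L}=\chi_q\omega_P\ge0$, split by residue frames $u\in\mathcal S_q$, bound $F_{H,q}$ below by cell densities via $F_{H,q}\gtrsim h_{H,q}$, use the no-increment hypothesis, and let $|\mathcal S_q|=\beta_q(0)q^{D-E}$ cancel $q^E$ while the lift count cancels $P^{-D}$.

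The problem is the order of averaging. You fix $z$ and average over $u\in\mathcal S_q$, which makes you need the residues $z+u_i$, $u\in\mathcal S_q$, to be equidistributed. Nothing like this is available: for $q$ with small prime factors nothing is known about how $\mathcal S_q$ projects to a single coordinate. Your substitute does not work either. A H\"older/Cauchy--Schwarz bound $\sum_{u\in\mathcal S_q}\prod_i d(z+u_i)\ge|\mathcal S_q|(\text{mean})^m-\text{error}$ at a fixed $z$ does not follow from the variance bound. That bound controls $d$ with respect to uniform measure on cells, not with respect to the points $z+u_i$ that $\mathcal S_q$ produces at a particular $z$. So, as written, the per-configuration weight $\rho^{m+1}$ is not established.

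\textbf{The fix is to reverse the order, as the paper does} (``for each $u\in\mathcal S_q$, split the base point''). Fix the integral frame $v$ and vary $z$; then no equidistribution is needed.
\begin{itemize}
\item Let $X_{\mathrm{bad}}$ be the set of points of $[1,N]^n$ lying in a discarded boundary cell or in a cell of density $<\rho/2$. Markov's inequality with $d_A\le\rho+\kappa_\rho$ gives $|X_{\mathrm{bad}}|\le(2\kappa_\rho/\rho)N^n+O(HN^{n-1}/\rho)$.
\item Since $z\mapsto z+v_i$ is a translation, the set of $z$ with $z+v_i\in X_{\mathrm{bad}}$ (or $z+v_i$ outside the box) for some $i$ has size at most $m|X_{\mathrm{bad}}|+O(mPN^{n-1})$.
\item This is at most $\rho N^n/2$ once $\kappa_\rho\le\rho^2/(8m)$ and $c^*_\rho\lesssim\rho^2/m$. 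The exceptional set has to be small compared with $|A|\ge\rho N^n$; saying it is $O(\kappa_\rho/\rho)$ is not enough on its own.
\item Hence at least $|A|/2$ base points $z\in A$ survive. For each of them, apply $F_{H,q}(z+v_i)\gtrsim h_{H,q}(z+v_i)\ge\rho/2$ directly at $z+v_i$. This gives $\gtrsim\rho^{m+1}N^n$ for every frame, uniformly in $v$.
\end{itemize}

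This also lets you drop the claim that $F_{H,q}$ at points $O(\theta P)$ apart agree up to a factor $1+O(1/C^*_\rho)$. A general Schwartz $\varphi$ only gives additive $O(P/H)$ comparability, and the pointwise bound at $z+v_i$ makes moving to a common base point unnecessary.
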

	
	\begin{proof}
		For each $u\in\mathcal S_q$, split the base point into residue classes modulo
		$q$.  The shifts in the support of $\omega_P$ have size $O(P)$ and hence
		are $O_\rho(H)$.  The discarded boundary contributes $O(H/N)$, which is
		absorbed by the choice of $c_\rho^*$.  By our assumption, positivity of
		$F_{H,q}$, and the standard Varnavides averaging give a lower bound
		$c\rho^{t_0}$ for \eqref{eq:asymmetric-main-lower} with $h_{H,q}$.  Only the affine relations
		\[
		z,\quad z+v_1,\ldots,z+v_{m-1},
		\quad z-v_1-\cdots-v_{m-1}
		\]
		are used; no independence of the vectors $v_i$ is required.
		
		There are $|\mathcal S_q|=\beta_q(0)q^{D-E}$ vectors $v=(v_1,\ldots,v_m)$ satisfying $\Phi(v)\equiv 0\pmod{q}$.  Their density $q^{-E}$ cancels the factor $q^E$ of $\chi_q$ in
		\eqref{eq:chiq}, while $P^{-D}$ cancels the number of lifts.  This gives
		the factor $\beta_q(0)$ in \eqref{eq:asymmetric-main-lower}; compare
		\eqref{eq:small-frame-count}--\eqref{eq:unweighted-null-count} and
		\cite{MSimplex09}.
	\end{proof}
	
	For a function $G$ on $[1,N]^n$ and $a\in(\Z/q\Z)^n$, set
	$G_a(y):=G(a+qy)$ and extend $G_a$ by zero.  We use the standard local
	$U^2$ norm at scale $S$ and, crucially, retain the average over all residue
	classes:
	\begin{align}
		\|G_a\|_{U^2(S)}^4
		&:=\mathbb E_x\mathbb E_{h,k\in[-S,S]^n}
		G_a(x)\overline{G_a(x+h)}
		\overline{G_a(x+k)}G_a(x+h+k),\nonumber\\
		\mathcal U_{S,q}(G)^4
		&:=\mathbb E_{a\bmod q}\|G_a\|_{U^2(S)}^4.
		\label{eq:averaged-local-U2}
	\end{align}
	Here all averages have the usual normalized finite-box interpretation; changing
	the boundary convention changes the quantities by $O(qS/N)$.
	
	\begin{lem}[Local $U^2$ increment on a scalar grid]
		\label{lem:local-U2-increment}
		Let $A\subset[1,N]^n$ have density
		$\bar\rho:=|A|/N^n\geq\rho$, let $H/q,S\geq2$, and suppose that
		\begin{equation}\label{eq:U2-increment-hypothesis}
			\mathcal U_{S,q}(\1_A-F_{H,q})\geq\eta.
		\end{equation}
		Then there are integers $s,M\geq1$ and a translate $b$ such that
		\[
		b+qs[1,M]^n\subset[1,N]^n,
		\]
		\begin{equation}\label{eq:U2-grid-size}
			s\leq C_\eta S^{1-\theta},\qquad
			M\geq c_\eta S^\theta,\qquad
			sM\leq C_\eta S,
		\end{equation}
		and
		\begin{equation}\label{eq:U2-density-increment}
			\frac{|A\cap(b+qs[1,M]^n)|}{M^n}
			\geq\bar\rho+c\eta^C-C_\eta\frac{H+qS}{N}.
		\end{equation}
		In particular, if $H+qS\leq c_\eta N$, then, after adjusting the
		constants, the right side is at least $\bar\rho+c\eta^C\geq
		\rho+c\eta^C$.
		The constants may be chosen so that
		$\log C_\eta+\log c_\eta^{-1}\ll\eta^{-C}$.
	\end{lem}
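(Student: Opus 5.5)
The plan is the standard energy-increment argument from the local $U^2$ inverse theorem, run on each residue class and followed by a pigeonhole to a sub-progression of rescaled step. Put $g:=\1_A-F_{H,q}$ and $g_a(y)=g(a+qy)$.

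\emph{Step 1: from the averaged hypothesis to one residue class.} By \eqref{eq:U2-increment-hypothesis} and averaging over $a$, at least an $\eta^4/2$ proportion of the residue classes $a\bmod q$ satisfy $\|g_a\|_{U^2(S)}\geq(\eta^4/2)^{1/4}\gg\eta$.

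\emph{Step 2: local inverse theorem.} For such an $a$ I apply the local $U^2$ inverse theorem on boxes of side $S$. Expanding $\|g_a\|_{U^2(S)}^4$ on boxes of side $\asymp S$ and using Fourier analysis (Plancherel) gives a box $B$ of side $S$ and a frequency $\xi\in\T^n$ with
\[
\Bigl|\mathbb E_{y\in B}\,g_a(y)e(-\xi\cdot y)\Bigr|\gg\eta^{2}.
\]
The average over boxes $B$ is also needed. Because $\mathcal U$ is an average of fourth powers, I obtain a positive proportion (in $\eta$) of pairs $(a,B)$ with such a correlation.

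\emph{Step 3: simultaneous Dirichlet and subdivision.} By simultaneous Dirichlet approximation with parameter $S^{1-\theta}$, there is $s\leq S^{1-\theta}$ (up to $C_\eta$ factors) with $\|s\xi_i\|\leq S^{-(1-\theta)}$. I then partition $B$ into progressions $y_0+s[1,M]^n$ with $M=c_\eta S^\theta$. On each of these the phase $e(\xi\cdot y)$ varies by at most $O(sM\cdot S^{-(1-\theta)}/s)=O(c_\eta)$, so it is essentially constant. Hence $\mathbb E|\,\mathbb E_{\text{prog}}g_a|\gg\eta^2$, averaged over progressions, pairs $(a,B)$ and then all $a$. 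This gives the bounds \eqref{eq:U2-grid-size}: $s\leq C_\eta S^{1-\theta}$, $M\geq c_\eta S^\theta$, and $sM\leq C_\eta S$.

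\emph{Step 4: mean-zero trick.} The key point is that $g$ has essentially zero mean on such progressions once averaged. The function $F_{H,q}$ restricted to class $a$ is $f_a*\psi_T$ with $\sum\psi_T=1$, so summing $g_a$ over all translates $y_0$ gives $\sum g_a=O(HN^{n-1}/q^{n})$, which is a boundary effect. I then use the standard trick $|x|=2x_+-x$: the total signed average is $O((H+qS)/N)$, so some progression satisfies $\mathbb E_{\text{prog}}g_a\geq c\eta^2-C(H+qS)/N$.

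\emph{Step 5: convert to a density increment.} The remaining task is to convert this into a density increment for $\1_A$ rather than for $g$, i.e.\ to control $\mathbb E_{\text{prog}}F_{H,q}$. \textbf{This is the main obstacle}, since $F_{H,q}$ can itself exceed $\bar\rho$ locally. I would handle it by a dichotomy.
\begin{itemize}
\item If $F_{H,q}\leq\bar\rho+c\eta^2/2$ on the chosen progression, the increment for $\1_A$ follows directly.
\item Otherwise $F_{H,q}(a+qy)=f_a*\psi_T(y)>\bar\rho+c\eta^2/2$ at some point. Since $\psi_T$ is a positive average at scale $T=H/q$, pigeonholing the positive majorant gives a cube $a+q(b'+[1,T']^n)$ with density $\geq\bar\rho+c\eta^2/4$, after absorbing the Schwartz tails. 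This is again a progression of step $q$, i.e.\ it is of the required form with $s=1$.
\end{itemize}
In the second case I must check the size constraints: this requires $T'$ comparable to $S$. If $H/q\gg S$, I subdivide the cube into sub-cubes of side $\asymp S^\theta$ and pigeonhole one with at least the same density.

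Both cases give \eqref{eq:U2-density-increment} with $\eta^C$, $C=2$, and boundary error $C_\eta(H+qS)/N$. All constants come only from Dirichlet's theorem and polynomial losses, so $\log C_\eta+\log c_\eta^{-1}\ll\eta^{-C}$.
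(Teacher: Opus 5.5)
Steps 1--4 follow the paper's argument, but Step~5 has a genuine gap: your dichotomy points the wrong way. On the selected progression $P$ you know $\mathbb{E}_P\1_A\geq\mathbb{E}_PF_{H,q}+c\eta^2-O((H+qS)/N)$. To beat $\bar\rho$ you therefore need a \emph{lower} bound $\mathbb{E}_PF_{H,q}\geq\bar\rho-c\eta^2/2$. The upper bound $F_{H,q}\leq\bar\rho+c\eta^2/2$ of your first case yields nothing: $P$ may lie in a region where $A$ has density $\bar\rho/2$, with $\1_A$ exceeding its local smoothed average by only $c\eta^2$. Your second case ($F_{H,q}$ large somewhere) is the harmless one; it is essentially the $s=1$ cell alternative that the paper handles separately in \cref{prop:quant-block}. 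The real obstacle is a local \emph{deficit} of $F_{H,q}$, not an excess. A deficit on one progression cannot be turned into a surplus by a local argument, so once you fix a single progression at the end of Step~4 the needed information is already gone.

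The missing idea, which is how the paper concludes, is to keep the full average over residue classes $a$ and translates $t$ and to use that $K_T$ (convolution with $\psi_T$) is positive and preserves constants. Fix grids $\mathcal R_a=s[1,M]^n$ and let $d_a(t)$ be the density of $f_a=\1_A(a+q\,\cdot)$ on $\mathcal R_a+t$. Then $\mathbb{E}_{x\in\mathcal R_a+t}(f_a*\psi_T)(x)=K_Td_a(t)$, and pointwise
\[
(d_a-K_Td_a)_+\leq(d_a-\bar\rho)_++K_T\bigl((\bar\rho-d_a)_+\bigr).
\]
Since $\mathbb{E}_{a,t}d_a=\bar\rho+O(qS/N)$, the averaged deficit $\mathbb{E}_{a,t}(\bar\rho-d_a)_+$ equals the averaged surplus $\mathbb{E}_{a,t}(d_a-\bar\rho)_+$ up to boundary terms. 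Also, $K_T$ preserves averages up to $O(H/N)$. Hence $\mathbb{E}_{a,t}(d_a-K_Td_a)_+\leq2\sup_{a,t}(d_a(t)-\bar\rho)+O((H+qS)/N)$. Combined with the averaged positive-part bound from Step~4 (before selecting a single progression), this forces an increment $\gg\eta^C$ on a translate of the \emph{same} grid $s[1,M]^n$. That also removes your size-matching issue between $T=H/q$ and $S$. If you keep box-dependent steps $s_B$, the same computation works, because translating a partition into progressions gives another partition.

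Two smaller points. Simultaneous Dirichlet approximation for $\xi\in\T^n$ with $s\leq S^{1-\theta}$ gives only $\|s\xi_i\|\leq S^{-\theta}$, not $S^{-(1-\theta)}$. This still suffices, since with $\theta=1/(n+1)$ the phase varies by $O(nMS^{-\theta})=O(c_\eta)$ on $s[1,M]^n$. Your claimed exponent $C=2$ is too optimistic after the thresholding losses, but that is harmless.
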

	
	\begin{proof}
		Put $f=\1_A$, $T=H/q$, and
		\[
		\Omega_a:=\{y\in\Z^n:a+qy\in[1,N]^n\},
		\qquad U_a:=\|(f-F_{H,q})_a\|_{U^2(S)}.
		\]
		The hypothesis says $\mathbb E_aU_a^4\geq\eta^4$.  Since $U_a\leq1$,
		thresholding shows that $U_a\geq\eta/2$ on a set of residue classes of
		measure $\gg\eta^4$.  Apply the standard Fourier--Dirichlet $U^2$ inverse
		argument with the individual norm $U_a$ in those fibres, and choose any
		admissible grid with the same bounds in the remaining fibres.  Averaging the
		resulting lower bounds (and enlarging the absolute exponent $C$) gives scalar
		grids
		$\mathcal R_a=s_a[1,M_a]^n$ satisfying \eqref{eq:U2-grid-size}.  For these
		grids put
		\[
		\mathcal T_a:=\{t\in\Z^n:\mathcal R_a+t\subset\Omega_a\},
		\qquad
		\mathbb E_{a,t}:=\mathbb E_{a\bmod q}
		\mathbb E_{t\in\mathcal T_a}.
		\]
		Then
		\begin{equation}\label{eq:grid-averages}
			\mathbb E_{a,t}
			\left|
			\mathbb E_{x\in\mathcal R_a+t}
			\bigl(f_a-f_a*\psi_T\bigr)(x)
			\right|
			\geq c\eta^C-O_\eta(qS/N),
		\end{equation}
		The usual proof permits the choices $s_a,M_a$ to depend on $a$; this causes
		no difficulty because the residue average has not been discarded.
		
		Set
		\[
		d_a(t):=\mathbb E_{x\in\mathcal R_a+t}f_a(x)
		\quad(t\in\Z^n),
		\qquad K_Td_a:=d_a*\psi_T.
		\]
		Here $f_a$ is extended by zero, so $d_a$ and hence $K_Td_a$ are defined
		for every $t$, including those near the boundary of $\mathcal T_a$.
		Since convolution commutes with averaging over a translated grid,
		\[
		\mathbb E_{x\in\mathcal R_a+t}(f_a*\psi_T)(x)
		=K_Td_a(t).
		\]
		Thus the inner average in \eqref{eq:grid-averages} is
		$\alpha_a(t):=d_a(t)-K_Td_a(t)$.  Averaging over all residue classes and
		all translates gives
		\[
		\mathbb E_{a,t}d_a(t)=\bar\rho+O_\eta(qS/N),
		\qquad
		\mathbb E_{a,t}K_Tu_a(t)
		=\mathbb E_{a,t}u_a(t)+O_\eta(H/N)
		\quad (|u_a|\leq1).
		\]
		The second identity holds for every bounded family
		$u_a:\Z^n\to\C$ and follows by translating $\mathcal T_a$ under the
		Markov kernel; only a boundary layer of physical width $O(H)$ is lost.
		Consequently \eqref{eq:grid-averages} implies
		\begin{equation}\label{eq:grid-positive-average}
			\mathbb E_{a,t}(\alpha_a(t))_+
			\geq c\eta^C-O_\eta((H+qS)/N).
		\end{equation}
		
		On the other hand, positivity and constant preservation of $K_T$ give the
		pointwise inequality
		\[
		(d_a-K_Td_a)_+
		\leq(d_a-\bar\rho)_+
		+K_T\bigl((\bar\rho-d_a)_+\bigr).
		\]
		If $\Delta:=\sup_{a,t}(d_a(t)-\bar\rho)$, averaging this inequality and
		using the preceding mean identities yields
		\[
		\mathbb E_{a,t}(\alpha_a(t))_+
		\leq2\Delta+O_\eta((H+qS)/N).
		\]
		Together with \eqref{eq:grid-positive-average}, this gives
		$\Delta\geq c\eta^C-C_\eta(H+qS)/N$.  Selecting the corresponding
		$a,t$ and returning to physical coordinates gives
		$b+qs_a[1,M_a]^n$, with $b=a+qt$, and proves
		\eqref{eq:U2-density-increment}.
	\end{proof}
	
	
	The other input is the explicit form of \cref{prop:disc-middle}:
	for every physically lacunary sequence of levels,
	\begin{equation}\label{eq:quant-middle-q}
		\sum_j|\cK_{q,L_j}^{\eps,\Z}(f)|^2
		\lesssim \eps^{-C}q^{D+2E},
	\end{equation}
	uniformly in the number of levels and in $|f|\leq1$.
	
	\begin{prop}[Quantitative block dichotomy]
		\label{prop:quant-block}
		For every $0<\rho\leq1$ there are an integer $q_\rho$, a block length
		$J_\rho$, and effective constants $b_\rho,C_\rho>0$ and
		$0<c_\rho\leq1$ such that
		\begin{equation}\label{eq:block-parameters}
			\log q_\rho\leq\rho^{-C},\qquad
			J_\rho\leq\exp(\rho^{-C}),
		\end{equation}
		and the following holds.  For every $X\geq2$ one can choose levels
		$L_1<\cdots<L_{J_\rho}$, divisible by $q_\rho$, whose physical scales
		$P_j=L_j^{1/r}$ satisfy
		\begin{equation}\label{eq:block-scales}
			b_\rho q_\rho X^{1/\theta}4^{j-1}
			\leq P_j\leq
			2b_\rho q_\rho X^{1/\theta}4^{j-1}.
		\end{equation}
		If $A\subset[1,N]^n$ has density at least $\rho$ and
		\begin{equation}\label{eq:block-fits}
			P_{J_\rho}\leq c_\rho N,
		\end{equation}
		then either
		\begin{enumerate}
			\item $A$ contains a simplex--center copy at one of the levels $L_j$;
			or
			\item for some $j$ there are integers $s,M\geq1$ and a translate $a$
			such that
			\[
			a+q_\rho s[1,M]^n\subset[1,N]^n,
			\]
			\begin{equation}\label{eq:block-increment-size}
				s\leq C_\rho(P_j/q_\rho)^{1-\theta},
				\qquad M\geq X,
			\end{equation}
			and
			\begin{equation}\label{eq:block-increment}
				\frac{|A\cap(a+q_\rho s[1,M]^n)|}{M^n}
				\geq\rho+\kappa_\rho.
			\end{equation}
		\end{enumerate}
		Moreover,
		\begin{equation}\label{eq:block-constant-bound}
			\log b_\rho+\log c_\rho^{-1}+\log C_\rho
			\leq\exp(C\rho^{-C}).
		\end{equation}
	\end{prop}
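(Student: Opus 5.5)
We prove the block dichotomy by combining the three local inputs already recorded: the coarse main term bound \cref{lem:quant-coarse-main}, the minor-arc bound \cref{prop:disc-minor}, and the uniform square-function bound \eqref{eq:quant-middle-q}. The parameters are fixed in the following order. Let $\beta_*$ be as in \eqref{eq:uniform-zero}, so that $\beta_q(0)\geq\beta_*$ for every $q$. Choose $\eps=\eps(\rho)$ as a small power of $\rho$, small enough that the constant $C\eps^c$ in \eqref{eq:disc-minor} is at most $\tfrac14 c_0c\beta_*\rho^{t_0}$, and also small enough to control a second error arising from the $U^2$ step below. Put $q_\rho:=q(\eps)$; then \eqref{eq:minor-effective-constants} gives $\log q_\rho\leq\rho^{-C}$. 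Next take $J_\rho:=\lceil C\eps^{-C}q_\rho^{D+2E}\rho^{-2t_0}\beta_*^{-2}\rangle\rceil$, which is $\leq\exp(\rho^{-C})$. Finally choose $b_\rho\geq C^*_\rho$ large, and define $L_j$ as a multiple of $q_\rho$ whose $r$-th root lies in the dyadic window \eqref{eq:block-scales}. This is possible because consecutive multiples of $q_\rho$ have $r$-th roots spaced $\ll q_\rho P^{1-r}\ll P$. The factor $4^{j-1}$ makes the scales lacunary, as required by \eqref{eq:quant-middle-q}.

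Now suppose (1) fails, so $\cN^{\Z}_{L_j}(\1_A)=0$ for every $j$, and suppose also that no cell density increment of size $\kappa_\rho$ occurs at any $j$. We decompose asymmetrically, keeping $f_0=\1_A$ and writing $f_i=F_{H_j,q}+(\1_A-F_{H_j,q})$ with $H_j=C^*_\rho P_j$. There are two cases.

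\emph{Case of a dense cell.} If some $C\in\mathcal C_{H_j,q}$ has $d_A(C)\geq\rho+\kappa_\rho$, then $C$ is itself a grid $a+q_\rho[1,M]^n$ with $s=1$ and $M=H_j/q\geq X$. Here $M\geq X$ holds because $P_j\geq b_\rho q X^{1/\theta}$. This gives alternative (2) directly.

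\emph{Main case.} Otherwise \cref{lem:quant-coarse-main} bounds the fully smoothed main term below by $c\beta_*\rho^{t_0}$. Each of the $2^m-1$ terms that contain at least one factor $\1_A-F_{H_j,q}$ is then handled as follows. If $\mathcal U_{S,q}(\1_A-F_{H_j,q})\geq\eta$ with $S=P_j/q$ and $\eta$ a small power of $\rho$, then \cref{lem:local-U2-increment} yields a grid with $s\leq C_\eta S^{1-\theta}$, $M\geq c_\eta S^\theta\geq X$ (using $\theta=1/(n+1)$ and the choice of $b_\rho$), and an increment $c\eta^C\geq\kappa_\rho$. The last inequality is exactly how $c_*,C_*$ were fixed in \eqref{eq:kappa-choice}. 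This is again alternative (2). If instead the $U^2$ norm is $<\eta$ for every $j$, we need a generalized von Neumann estimate: the coarse form $\cM_{q,L_j}^{\Z}$, which has the smooth kernel $\chi_q\omega_P$, is controlled by $\mathcal U_{S,q}$ of any single non-$f_0$ entry. We prove this by splitting into residue classes modulo $q$, using \eqref{eq:chiq} to reduce to a finite average over $u\in\mathcal S_q$ of linear forms in the centered frame \eqref{eq:centered-frame}, and applying Cauchy--Schwarz twice.

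In the main case with small $U^2$ norms, $\cM^{\Z}_{q,L_j}(\1_A)\geq\tfrac12 c\beta_*\rho^{t_0}$ for every $j$. Combining this with \eqref{eq:discrete-forms-decomp} and \eqref{eq:disc-minor} gives $|\cK^{\eps,\Z}_{q,L_j}(\1_A)|\geq\tfrac14c_0c\beta_*\rho^{t_0}$ for all $j\leq J_\rho$. Summing over $j$ contradicts \eqref{eq:quant-middle-q} by the choice of $J_\rho$. The requirement $H_j+qS\leq c_\eta N$ is ensured by \eqref{eq:block-fits} once $c_\rho$ is small. All constants are composed from $\eps^{-C}$, $C^*_\rho$ and $C_\eta$, each of which is at most $\exp(C\rho^{-C})$, which gives \eqref{eq:block-constant-bound}.

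The main obstacle is the generalized von Neumann step. The kernel $\chi_q\omega_P$ is only periodic modulo $q$, and the bound must be uniform in $q$ and must lose nothing beyond $\beta_*$. That is why the residue-averaged norm $\mathcal U_{S,q}$ is kept rather than a single fibre. If the two Cauchy--Schwarz steps leave a power of $q$, I would try to absorb it by taking $\eta$ a power of $\rho q_\rho^{-1}$. This is harmless, since $\log q_\rho\leq\rho^{-C}$ keeps $\log\eta^{-1}$ bounded by $\rho^{-C}$, and $\kappa_\rho$ can be adjusted accordingly.
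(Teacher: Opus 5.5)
Your proposal is correct and follows essentially the same route as the paper. In both, a dense cell gives alternative (2) with $s=1$; otherwise \cref{lem:quant-coarse-main} bounds the asymmetric main term, and a residue-averaged generalized von Neumann estimate either feeds \cref{lem:local-U2-increment} or makes the replacement errors negligible, so the resulting lower bound on every $|\cK^{\eps,\Z}_{q,L_j}(\1_A)|$ contradicts \eqref{eq:quant-middle-q} for $J_\rho$ large; expanding into $2^m-1$ terms rather than telescoping $m$ replacements is immaterial.

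The $q$-uniformity you single out as the main obstacle holds for the reason the paper gives. For fixed $u\in\mathcal S_q$, the shift $a\mapsto a+u_i$ merely permutes the residue classes modulo $q$, so the residue-averaged norm $\mathcal U_{S,q}(\1_A-F_{H_j,q})$ is unaffected and no power of $q$ is lost. Your fallback of taking $\eta$ a power of $\rho q_\rho^{-1}$ would not be harmless. Since $\log q_\rho$ can be as large as $\rho^{-C}$, it would shrink the increment to size $\exp(-C\rho^{-C})$. That is incompatible with the polynomial $\kappa_\rho=c_*\rho^{C_*}$ fixed in \eqref{eq:kappa-choice}, and it would break the depth bound \eqref{eq:tree-depth} used afterwards.
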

	
	\begin{proof}
		Take $\eps_\rho=c_0\,\rho^{C_0}$, with $c_0>0$ small and $C_0$ large
		enough that
		\cref{prop:disc-minor} gives
		\begin{equation}\label{eq:quant-minor-small}
			|\cE_{q(\eps_\rho),L}^{\eps_\rho,\Z}(\1_A)|
			\leq \frac{c}{8}\beta_*\rho^{t_0},
			\qquad t_0=m+1.
		\end{equation}
		Fix also
		\begin{equation}\label{eq:block-U2-threshold}
			\eta_\rho:=c_1\rho^{t_0},
		\end{equation}
		where $c_1>0$ is sufficiently small.  The constants in
		\eqref{eq:kappa-choice} are chosen so that $\kappa_\rho$ is below the
		increment $c\eta_\rho^C$ supplied by \cref{lem:local-U2-increment}.
		Choose the common denominator $q_\rho=q(\eps_\rho)$ in
		\eqref{eq:discrete-decomp}.  The construction in
		\cref{prop:disc-minor} gives $\log q_\rho\leq\rho^{-C}$, and its lower
		scale threshold is absorbed into $b_\rho$.
		Choose the fit constant in the proposition so that
		\begin{equation}\label{eq:block-fit-constant}
			c_\rho\leq
			\min\left\{
			\frac{c_\rho^*}{C_\rho^*},
			\frac{c_{\eta_\rho}}{C_\rho^*+1}
			\right\}.
		\end{equation}
		Shrink $c_\rho$ further, if necessary, to meet the upper-scale
		hypotheses in \cref{prop:disc-minor}.  Thus $P_j\leq c_\rho N$
		controls both the $C_\rho^*P_j/N$ and $P_j/N$ boundary errors.
		Choose $b_\rho$ sufficiently large to absorb the lower threshold
		$P_0(\eps_\rho)$, the condition $P_1\geq C_\rho^*q_\rho$, the constants in
		\cref{lem:local-U2-increment}, the nesting inequality
		\eqref{eq:tree-nesting-template} below, and the requirement $M\geq X$.
		
		For each $j$, let $H_j=C_\rho^*P_j$ and $F_j=F_{H_j,q_\rho}$.  If, for
		some $j$, a cell $C\in\mathcal C_{H_j,q_\rho}$ has density at least
		$\rho+\kappa_\rho$, then, since
		$C=b+q_\rho[1,M_{H_j}]^n$, it gives the second alternative with $s=1$.
		Our choice of $b_\rho$ ensures $M_{H_j}\geq X$.
		Otherwise \cref{lem:quant-coarse-main} applies for every $j$.  If the first
		alternative fails, every level $L_j$ is missing, and for each $j$ we write
		\eqref{eq:discrete-forms-decomp} as
		\begin{equation}\label{eq:block-four-terms}
			0=\cB_j+\cD_j+\cK_j+\cE_j,
		\end{equation}
		where
		\begin{align*}
			\cB_j&:=c_{\eps_\rho}\cM_{q_\rho,L_j}^{\Z}
			(\1_A,F_j,\ldots,F_j),\\
			\cD_j&:=c_{\eps_\rho}\left[
			\cM_{q_\rho,L_j}^{\Z}(\1_A,\ldots,\1_A)
			-\cM_{q_\rho,L_j}^{\Z}(\1_A,F_j,\ldots,F_j)
			\right].
		\end{align*}
		Thus
		\begin{equation}\label{eq:block-B-lower}
			\cB_j\geq c\,\beta_{q_\rho}(0)\rho^{t_0}.
		\end{equation}
		
		Telescoping the $m$ replacements in $\cD_j$ and decomposing both the frame
		and the base point into residue classes modulo $q_\rho$, a typical term
		has the normalized form
		\begin{equation}\label{eq:residue-normalized-error}
			\beta_{q_\rho}(0)
			\mathbb E_{u\in\mathcal S_{q_\rho}}
			\mathbb E_{a\bmod q_\rho}
			\Lambda_{u,a}(F_0,\ldots,
			\1_A-F_j,\ldots,F_m),
		\end{equation}
		where $\Lambda_{u,a}$ is an unweighted affine form on a
		$q_\rho$-grid at scale $P_j/q_\rho$.  Thus the height $q_\rho^E$ of
		$\chi_{q_\rho}$ has canceled against the $q_\rho^{-E}$ density of null
		residue frames $v$ satisfying $\Phi(v)\equiv 0\pmod{q_\rho}$.
		
		The affine system satisfying $v_1+\cdots+v_m=0$ has complexity one.  Put
		\[
		S_j:=\left\lfloor P_j/q_\rho\right\rfloor.
		\]
		The localized generalized von Neumann inequality, with all residue-class
		averages in \eqref{eq:residue-normalized-error} retained, therefore gives
		the following estimate.  Indeed, for fixed $u$, translating $a$ by the
		residue of the balanced slot merely permutes the residue classes, so the
		$u$-average does not alter the displayed norm:
		\[
		\mathbb E_{a\bmod q_\rho}
		\| (\1_A-F_j)_a\|_{U^2(S_j)}
		\gtrsim\rho^{t_0}
		\]
		whenever
		\begin{equation}\label{eq:D-large}
			|\cD_j|>\frac{c}{4}\beta_{q_\rho}(0)\rho^{t_0},
		\end{equation}
		By Jensen's inequality this implies
		\[
		\mathcal U_{S_j,q_\rho}(\1_A-F_j)\geq\eta_\rho.
		\]
		Now \cref{lem:local-U2-increment} gives the second alternative.  The
		choice of $c_\rho$ absorbs the error in
		\eqref{eq:U2-density-increment}, while the choice of $b_\rho$ ensures
		$M\geq X$ and gives the bound for $s$ in
		\eqref{eq:block-increment-size}.
		
		If neither alternative occurs, then
		\eqref{eq:block-B-lower}, \eqref{eq:quant-minor-small}, and
		\eqref{eq:block-four-terms} imply
		\[
		|\cK_j|\geq c'\beta_*\rho^{t_0}
		\qquad(1\leq j\leq J_\rho).
		\]
		On the other hand, \eqref{eq:quant-middle-q} gives
		\[
		\sum_{j=1}^{J_\rho}|\cK_j|^2
		\lesssim \eps_\rho^{-C}q_\rho^{D+2E}.
		\]
		It is therefore enough to take
		\begin{equation}\label{eq:block-length-choice}
			J_\rho>
			C\eps_\rho^{-C}q_\rho^{D+2E}
			(c'\beta_*\rho^{t_0})^{-2}.
		\end{equation}
		This choice satisfies $J_\rho\leq\exp(\rho^{-C})$.
		
		Finally, with $Y_j=b_\rho q_\rho X^{1/\theta}4^{j-1}$, choose
		\[
		L_j=q_\rho\left\lceil Y_j^r/q_\rho\right\rceil.
		\]
		With this fixed choice of $b_\rho$, these levels satisfy
		\eqref{eq:block-scales}; in particular their physical scales are
		lacunary.  Since the logarithmic width of the block is $O(J_\rho)$,
		all constants obey \eqref{eq:block-constant-bound}.  This proves the proposition.
	\end{proof}
	
	\subsection{Construction of the finite tree of scales.}
	
	\begin{proof}[Proof of \cref{thm:quantitative-lattice}]
		We first define some fixed parameters independently of the set which will allow ``enough room" to define  a family of scales for our induction on the density arguments. Let $\rho_0=\delta$ and
		define
		\begin{equation}\label{eq:density-thresholds}
			\rho_{h+1}:=\rho_h+\kappa_{\rho_h}.
		\end{equation}
		Let $h_*$ be the first index for which $\rho_{h_*}>1$.  Since
		$\kappa_\rho=c_*\rho^{C_*}$,
		\begin{equation}\label{eq:tree-depth}
			h_*\leq\delta^{-C}.
		\end{equation}
		Write $q_h=q_{\rho_h}$ and $J_h=J_{\rho_h}$.
		
		The constants $b_\rho$ in \cref{prop:quant-block} are
		chosen large enough that, whenever $P$ satisfies the lower bound in
		\eqref{eq:block-scales}, we have 
		\begin{equation}\label{eq:tree-nesting-template}
			q_\rho
			\left\lceil C_\rho(P/q_\rho)^{1-\theta}\right\rceil X
			\leq P,
		\end{equation}
		which follows from $P\geq b_\rho q_\rho X^{1/\theta}$.
		Let
		\begin{equation}\label{eq:tree-B-rho}
			B_\rho:=2c_\rho^{-1}b_\rho q_\rho4^{J_\rho-1}+2.
		\end{equation}
		Then \eqref{eq:block-parameters} and
		\eqref{eq:block-constant-bound} give
		\begin{equation}\label{eq:tree-B-bound}
			\log B_\rho\leq\exp(C\rho^{-C}).
		\end{equation}
		Set
		\begin{equation}\label{eq:tree-terminal}
			X_{h_*}:=\max\{2,R_0\}.
		\end{equation}
		For $h=h_*-1,h_*-2,\ldots,0$, define
		\begin{equation}\label{eq:tree-room-recursion}
			X_h:=B_{\rho_h}X_{h+1}^{1/\theta},
		\end{equation}
		and then apply \cref{prop:quant-block} with $X=X_{h+1}$ to choose a block of lacunary scales
		\[
		L_{h,1}<\cdots<L_{h,J_h},
		\qquad P_{h,j}:=L_{h,j}^{1/r}.
		\]
		The explicit upper bound in \eqref{eq:block-scales} and the definition of
		$B_{\rho_h}$ give
		\begin{equation}\label{eq:tree-block-room}
			P_{h,J_h}\leq c_{\rho_h}X_h,
		\end{equation}
		while \eqref{eq:tree-nesting-template} gives, for
		\begin{equation}\label{eq:tree-s-bound}
			s^{\max}_{h,j}:=
			\left\lceil C_{\rho_h}(P_{h,j}/q_h)^{1-\theta}\right\rceil,
		\end{equation}
		one has the relation
		\begin{equation}\label{eq:tree-nesting}
			q_hs^{\max}_{h,j}X_{h+1}\leq P_{h,j}
			\qquad(0\leq h<h_*,\ 1\leq j\leq J_h).
		\end{equation}
		This is precisely the reason for choosing the parent block at physical
		scale comparable to $q_hX_{h+1}^{1/\theta}$.
		
		We now build the ``decision" tree.  Let $\mathcal V_0$ consist of a
		single root $\varnothing$, and put $g_{\varnothing}=1$.  Having constructed
		the finite set $\mathcal V_h$ of nodes of generation $h$, attach to every
		$\nu\in\mathcal V_h$ one child $(\nu,j,s)$ for every
		\[
		1\leq j\leq J_h,
		\qquad 1\leq s\leq s^{\max}_{h,j},
		\]
		and define
		\begin{equation}\label{eq:tree-dilation}
			g_{(\nu,j,s)}:=g_\nu q_hs.
		\end{equation}
		Translations and side lengths do not label formal children: they affect
		the location and available room of an actual increment box, but not the
		level of a copy lifted back to the original coordinates.
		
		Define the predetermined root-level family
		\begin{equation}\label{eq:tree-family}
			\mathcal T_{\delta,R_0}
			:=
			\left\{
			g_\nu^rL_{h,j}:
			0\leq h<h_*,\ \nu\in\mathcal V_h,\ 1\leq j\leq J_h
			\right\}.
		\end{equation}
		This family is finite and depends only on $\delta,R_0$ and the fixed
		configuration, not on $A$.
		
		Let $A\subset[1,N]^n$ have density at least $\delta$, where $N\geq X_0$,
		and suppose for a contradiction that $A$ avoids every level in
		\eqref{eq:tree-family}.  We follow a path down the already constructed
		tree.  At an actual node $\nu\in\mathcal V_h$ we keep an affine scalar
		embedding
		\[
		\iota_\nu(x)=b_\nu+g_\nu x
		\]
		and a normalized set
		\[
		A_\nu:=\{x\in[1,N_\nu]^n:\iota_\nu(x)\in A\}
		\]
		satisfying
		\begin{equation}\label{eq:tree-node-invariant}
			|A_\nu|\geq\rho_hN_\nu^n,
			\qquad N_\nu\geq X_h.
		\end{equation}
		At the root take $b_{\varnothing}=0$, $g_{\varnothing}=1$, and
		$N_{\varnothing}=N$.
		
		If $A_\nu$ contained a copy at a direct level $L_{h,j}$, homogeneity would
		give
		\[
		Q\bigl(g_\nu(v_i-v_j)\bigr)
		=g_\nu^rL_{h,j}\ell_{ij},
		\]
		so $A$ would contain a copy at the level $g_\nu^rL_{h,j}$ in
		\eqref{eq:tree-family}.  Hence all levels in the block are missing from
		$A_\nu$.  The fit condition follows from
		\eqref{eq:tree-block-room} and \eqref{eq:tree-node-invariant}, so
		\cref{prop:quant-block} supplies, for some $j,s$, a grid
		\[
		a+q_hs[1,M]^n\subset[1,N_\nu]^n,
		\qquad M\geq X_{h+1},
		\]
		on which the normalized density is at least $\rho_{h+1}$.  Select the
		formal child $\nu'=(\nu,j,s)$ and put
		\[
		b_{\nu'}:=b_\nu+g_\nu a,
		\qquad g_{\nu'}:=g_\nu q_hs,
		\qquad N_{\nu'}:=M.
		\]
		The corresponding set $A_{\nu'}$ satisfies
		\eqref{eq:tree-node-invariant} at generation $h+1$.  Notice that the full
		$M$-box is retained; no passage to a subbox of exactly side $X_{h+1}$ is
		needed.
		
		After $h_*$ such forward steps, \eqref{eq:tree-node-invariant} would give a
		set of density at least $\rho_{h_*}>1$, a contradiction.  Thus $A$ contains a
		copy at a level in \eqref{eq:tree-family}.
		
		It remains to bound the scales.  Along a child edge generated by $(j,s)$,
		\eqref{eq:tree-nesting} gives
		\begin{equation}\label{eq:lifted-tree-nesting}
			g_{\nu'}X_{h+1}
			=g_\nu q_hsX_{h+1}
			\leq g_\nu P_{h,j}.
		\end{equation}
		Since \eqref{eq:tree-block-room} also gives $P_{h,j}\leq X_h$, induction
		down the formal tree yields the exact invariant
		\begin{equation}\label{eq:root-scale-invariant}
			g_\nu X_h\leq X_0
			\qquad(\nu\in\mathcal V_h).
		\end{equation}
		Hence every physical test scale $g_\nu P_{h,j}$ in
		\eqref{eq:tree-family} is at most $X_0$.  On the other hand,
		\eqref{eq:tree-nesting} gives
		$P_{h,j}\geq X_{h+1}\geq X_{h_*}\geq R_0$, while $g_\nu\geq1$.
		Therefore $g_\nu P_{h,j}\geq R_0$ for every test level.
		
		Write $x_h=\log X_h$ and $d_\theta=1/\theta=n+1$.  From
		\eqref{eq:tree-room-recursion} and \eqref{eq:tree-B-bound},
		\begin{equation}\label{eq:tree-log-recurrence}
			x_h\leq d_\theta x_{h+1}+\exp(C\rho_h^{-C}).
		\end{equation}
		Since $h_*\leq\delta^{-C}$ and $\rho_h\geq\delta$, iteration gives
		\[
		x_0\leq\exp(C\delta^{-C})(1+\log R_0).
		\]
		Taking $N_*(\delta,R_0)=\lceil X_0\rceil$ proves
		\eqref{eq:quant-N-bound} and \eqref{eq:quant-level-range}, and completes
		the proof of \cref{thm:quantitative-lattice}.  Taking $R_0=1$ proves
		\cref{thm:lattice-similar}.
	\end{proof}
	
	\begin{rem}
		The key observation is that the refinement parameter $s$ can make the tree
		very broad, but it does not enter its depth or the maximum-scale recurrence
		\eqref{eq:tree-log-recurrence}.  The construction of the tree is necessary
		because we do not have a direct quantitative lower bound, independent of the
		modulus $q$, for the main term in the discrete case when $m\geq3$, owing to
		the congruence restriction $\Phi(v)\equiv0\pmod q$.  Such estimates may be
		possible and would make the quantitative argument substantially simpler and
		more similar to the Euclidean case.
		
	\end{rem}

\end{document}